\documentclass[USenglish, a4paper, 10pt]{article}

\input{TeX/Preamble.tex}


	\title{%
		On the natural domain of Bregman operators\thanks{%
			The work of A. Themelis was supported by the JSPS KAKENHI grants number JP21K17710 and JP24K20737.
			Z. Wang acknowledges the support of Xianfu Wang's NSERC Discovery Grants and the Mitacs Gloablink Award.
		}%
	}
	\author{%
		Andreas Themelis\thanks{%
			Faculty of Information Science and Electrical Engineering, Kyushu University.
			744 Motooka, Nishi-ku, Fukuoka 819-0395, Japan.
			{\it E-mail:} {\sf andreas.themelis@ees.kyushu-u.ac.jp}%
		}\and
		Ziyuan Wang\thanks{%
			Department of Mathematics, University of Vienna.
			Oskar-Morgenstern-Platz 1, 1090 Vienna, Austria.
			{\it E-mail:} {\sf ziyuanw96@univie.ac.at}%
		}%
	}
	\date{}


\begin{document}

	\maketitle

	\begin{abstract}
		The Bregman proximal mapping and Bregman-Moreau envelope are traditionally studied for functions defined on the entire space \(\R^n\), even though these constructions depend only on the values of the function within (the interior of) the domain of the distance-generating function (dgf).
While this convention is largely harmless in the convex setting, it leads to substantial limitations in the nonconvex case, as it fails to embrace important classes of functions such as relatively weakly convex ones.
In this work, we revisit foundational aspects of Bregman analysis by adopting a domain-aware perspective: we define functions on the natural domain induced by the dgf and impose properties only relative to this set.
This framework not only generalizes existing results but also rectifies and simplifies their statements and proofs.
Several examples illustrate both the necessity of our assumptions and the advantages of this refined approach.

		\keywords{%
			Bregman distance,
			left and right Bregman proximal mapping and Moreau envelope,
			set-valued analysis,
			\(\Phikernel\)-convexity.%
		}%
		\amssubj{%
			49J52, 
			49J53, 
			49M27, 
			47H04, 
			26B25.
		}%
	\end{abstract}

	\tableofcontents

	\section{Introduction}
		The \emph{proximal mapping} and the \emph{Moreau envelope} are fundamental tools in variational analysis and optimization.
Given a proper, lower semicontinuous function \(\func{f}{\R^n}{\Rinf\coloneqq\R\cup\set{\pm\infty}}\), the \emph{proximal mapping} with parameter \(\lambda > 0\) is the set-valued operator \(\ffunc{\Eprox_{\lambda f}}{\R^n}{\R^n}\) defined as
\begin{equation}\label{eq:Eprox}
	\Eprox_{\lambda f}(y)
\coloneqq
	\argmin_{x \in \R^n} \set{ f(x) + \tfrac{1}{2\lambda} \norm{x - y}^2},
\end{equation}
while the \emph{Moreau envelope} is the associated value function \(\func{\Eenv_\lambda f}{\R^n}{\Rinf}\):
\begin{equation}\label{eq:Eenv}
	\Eenv_\lambda f(y)
\coloneqq
	\inf_{x \in \R^n} \set{ f(x) + \tfrac{1}{2\lambda} \norm{x - y}^2}.
\end{equation}
The proximal mapping is a building block of many optimization algorithms, while the Moreau envelope \(\Eenv_\lambda f\) serves as a regularized approximation of \(f\) that plays a key role in the theoretical analysis of their convergence.

Replacing the quadratic term with a \emph{Bregman distance} \(\func{\D}{\R^n\times\R^n}{\Rinf}\) induced by a proper, lsc, convex function \(\func{\kernel}{\R^n}{\Rinf}\), namely
\begin{equation}
	\D(x,y)
	=
	\begin{cases}
		\kernel(x)-\kernel(y)-\innprod{\nabla\kernel(y)}{x-y} & \text{if } y\in\interior\dom\kernel
		\\
		\infty & \text{otherwise,}
	\end{cases}
\end{equation}
leads to the \emph{Bregman} counterparts of \eqref{eq:Eprox} and \eqref{eq:Eenv} that will be the focus of this paper.
Since the Bregman distance is typically asymmetric, the minimizations with respect to the first or second argument may differ, which gives rise to a \emph{left} and a \emph{right} Bregman proximal operator, denoted
\begin{subequations}\label{subeq:prox}
	\begin{align}
		\prox_{\lambda f}(\bar y)
	\coloneqq{} &
		\argmin\set{f+\tfrac{1}{\lambda}\D({}\cdot{},\bar y)}
	\shortintertext{and}
		\prox*_{\lambda g}(\bar x)
	\coloneqq{} &
		\argmin\set{g+\tfrac{1}{\lambda}\D(\bar x,{}\cdot{})},
	\end{align}
\end{subequations}
as well as the corresponding \emph{left} and \emph{right} envelopes
\begin{subequations}\label{subeq:env}
	\begin{align}
		\env_\lambda f(\bar y)
	\coloneqq{} &
		\inf\set{f+\tfrac{1}{\lambda}\D({}\cdot{},\bar y)}
	\shortintertext{and}
		\env*_\lambda g(\bar x)
	\coloneqq{} &
		\inf\set{g+\tfrac{1}{\lambda}\D(\bar x,{}\cdot{})},
	\end{align}
\end{subequations}
all defined for extended-real-valued functions \(f\) and \(g\).
These generalizations are of central importance in optimization and variational analysis; see for instance
the Chebyshev problem in the sense of the Bregman distance \cite{bauschke2009bregmanC},
various Bregman iterative methods \cite{bauschke2003iterating,bauschke2009bregmanC,bauschke2009bregmanK},
the celebrated mirror descent algorithms \cite{beck2003mirror,nemirovskij1983problem},
generalized Bregman distances \cite{burachik2021generalized1} and various properties of the corresponding envelopes and proximal operators \cite{burachik2021generalized2},
and Bregman algorithms for problems beyond traditional Lipschitz smoothness \cite{bauschke2017descent,bolte2018first,lu2018relatively,ahookhosh2021bregman,latafat2022bregman,wang2024mirror,ou2025linesearch},
just to name a few.
As a result, the analysis of Bregman operators has attracted considerable attention in the past decade, beginning with the influential work of \cite{kan2012moreau}.

		\phantomsection
		\addcontentsline{toc}{subsection}{Bregman notions revisited}%
		\subsection*{Bregman notions revisited}
			It is a widespread convention in the literature to consider \(f\) and \(g\) as functions \(\R^n\to\Rinf\), with minimal working assumptions in place of properness and lower semicontinuity \emph{on the whole space \(\R^n\)} \cite{kan2012moreau,laude2020bregman,wang2022bregman}.
On the other hand, it should be noted from \eqref{subeq:prox} and \eqref{subeq:env} that both the proximal mapping and the Moreau envelope solely depend on the values of \(f\) on \(\dom\kernel\) and of \(g\) on \(\interior\dom\kernel\).
As such, it is expectable that conditions solely within these domains should suffice.

In this paper we demonstrate that this is indeed the case, and that a much richer theory can be established by explicitly considering functions \(\func{f}{\X}{\Rinf}\) and \(\func{g}{\Y}{\Rinf}\) defined on suitable subsets \(\X,\Y\subseteq\R^n\): specifically, with
\[
	\X\coloneqq\dom\kernel
\quad\text{and}\quad
	\Y\coloneqq\interior\dom\kernel.
\]
Importantly, not all functions can be extended to the whole space \(\R^n\) while maintaining properties such as lower semicontinuity and properness, see \cref{ex:ln}.
For instance, when \(\dom\kernel\) is not closed, the conventional view may exclude many \emph{relatively weakly convex} functions--those made convex by adding a suitable multiple of \(\kernel\).\footnote{%
	If \(\kernel\) is Legendre and \(\dom\kernel\) is not closed, the relatively weakly convex function \(f\) equal to \(-\kernel\) on \(\dom\kernel\), regardless of how it is extended outside of this set, can never be made globally lsc and proper at the same time.
}
Yet under mild assumptions on \(\kernel\), \emph{all} such excluded functions still enjoy favorable properties: their Bregman-Moreau envelope is smooth, and their proximal mapping is continuous for small parameters.

Our viewpoint of restricted domains does not affect existing results; rather, it \emph{extends} their validity and brings considerable simplifications in the statements, without compromising established notational conventions, see \cref{rem:equiv}.

Ultimately, and leaving the details to the dedicated \cref{sec:Phiconj}, our rationale is also driven by other considerations.
Left and right proximal mappings and envelopes naturally fit into a \emph{\(\Phikernel\)-convexity} picture, as recently explored in \cite{laude2023dualities}.
The theory therein is developed under a blanket full domain assumption of \(\kernel\), and has been then extended in \cite{laude2025anisotropic} beyond this requirement by resorting to \emph{extended arithmetics} to resolve indeterminate expressions such as \(\infty-\infty\).
We argue that extended arithmetics are an unnecessary complication, that can altogether be avoided if one considers functions defined on suitable subsets of \(\R^n\); see our simplified proof of \cite[Prop. 5.11]{laude2023dualities} within \cref{thm:Bcoco}.

But even when \(\X\) is open and thus coincides with \(\Y\), including the case in which \(\X=\R^n\), we still consider natural to treat \(\X\) and \(\Y\) as \emph{distinct} spaces, only dually related through the coupling \(\Phikernel\), and functions \(\func{f}{\X}{\Rinf}\) and \(\func{g}{\Y}{\Rinf}\) as distinct \emph{left} and \emph{right} objects.
Our emphasis in the notation, e.g. in insisting on writing \(x\in\interior\X\) as opposed to \(x\in\Y\), is also driven by this consideration.

		\phantomsection
		\addcontentsline{toc}{subsection}{Contribution}%
		\subsection*{Contribution}
			In this work we challenge the conventional approach of defining functions on the full ambient space \(\R^n\), arguing instead for a domain-restricted perspective that fully reflects the actual behavior of Bregman-type operators.
Through several examples we showcase how our approach not only extends the validity of existing results but it also brings considerable simplifications in their statements.

To do so, we revisit standard notions of variational analysis for functions and operators defined on subsets on \(\R^n\), and identify interesting connections which we believe enjoy an independent appeal.
Several new characterizations are derived, and a rich list of examples demonstrates the necessity of each assumption we impose.
In particular, for the \emph{right} proximal mapping and Moreau envelope, in contrast to much of the existing literature which studies them by first converting them into \emph{left} objects, we develop a direct analysis and offer a new representation involving an \emph{epi-composite} function.

Ultimately, we provide a new characterization of \emph{relative smoothness} \cite{bauschke2017descent,lu2018relatively} that links \emph{Bregman cocoercivity} \cite{dragomir2021methodes,dragomir2021fast} and \emph{anisotropic strong convexity} \cite{laude2023dualities,laude2025anisotropic} inequalities.
We also framed results investigated in this paper in the context of \emph{\(\Phikernel\)-conjugacy}, which is a fundamental notion in optimal transport \cite{figalli2023invitation} (see also \cite{rubinov2013abstract}), and have recently been used to study dualities of relative smoothness and relative strong convexity \cite{laude2023dualities}.

		\phantomsection
		\addcontentsline{toc}{subsection}{Paper organization}%
		\subsection*{Paper organization}
			The remainder of this paper is organized as follows.
We conclude this introductory section with a list of notational conventions adopted throughout.
In \cref{sec:analysis}, we revisit some basics in nonsmooth and variational analysis for functions and set-valued operators defined on subsets of \(\R^n\).
These serve as the groundwork for \cref{sec:Bprox}, where we introduce the central Bregman objects in accordance with our proposed convention, supported by many examples and, in \cref{sec:Phiconj}, also from the perspective of \(\Phikernel\)-conjugacy.
The developments in this last section lead to a novel characterization of \emph{relative smoothness}, that partially closes the gap with known equivalences in the Euclidean setting.

		\phantomsection
		\addcontentsline{toc}{subsection}{Notation}%
		\subsection*{Notation}
		The set of natural numbers is \(\N\coloneqq\set{0,1,2,\dots}\), while \(\R\) and \(\Rinf\coloneqq\R\cup\set{\pm\infty}\) denote the set of real and extended-real numbers, respectively.
We use \(\innprod{{}\cdot{}}{{}\cdot{}}\) to denote the standard inner product on \(\R^n\), and let \(\norm{x}=\sqrt{\innprod{x}{x}}\) be the induced norm.
We also denote \(\j(x)=\frac{1}{2}\norm{x}^2\).
The interior, closure, and boundary of a set \(S\subseteq\R^n\) are respectively denoted as \(\interior S\), \(\closure S\), and \(\boundary S=\closure S\setminus\interior S\).
The \emph{indicator function} of \(S\) is the extended-real-valued function \(\func{\indicator_S}{\R^n}{\Rinf}\) defined as \(\indicator_S(x)=0\) if \(x\in S\) and \(\infty\) otherwise.

	\section{Preliminaries on maps defined on subsets of \texorpdfstring{\(\R^n\)}{Rn}}\label{sec:analysis}%
		This section revisits fundamental notions from convex and variational analysis, adapted to the setting of functions \(\func{f}{X}{\Rinf}\) and set-valued operators \(\ffunc{T}{X}{Y}\), where \(X,Y\subseteq\R^n\) are assumed to be nonempty and convex.
While convexity of \(X\) and \(Y\) is not strictly necessary for many of the definitions presented here, it is required for later developments.

		\subsection{Relative topology}
			Our aim is to extend classical results---typically formulated for the full vector space \(X=Y=\R^n\)---to the more general case where functions and operators are defined on \emph{subsets} of \(\R^n\).
To do so, we adopt a viewpoint that treats \(X\) and \(Y\) as topological spaces in their own right, equipped with the subspace topology inherited from \(\R^n\).
In this sense, we intentionally ``forget'' that \(X\) and \(Y\) are embedded in \(\R^n\), and instead focus on their intrinsic topological properties.

Accordingly, when we refer to a set as being \emph{closed in \(X\)} (or \emph{relative to \(X\)}), we mean that it is the intersection of \(X\) with a closed set in \(\R^n\); the same terminology applies to open sets and neighborhoods.
Note that closedness and sequential closedness coincide also in the induced topologies.
Namely, a set \(C\) is closed in \(X\) if and only if whenever a sequence \(\seq{x^k}\) entirely contained in \(C\) converges to a point in \(X\), this point also belongs to \(C\).

We also remind that a set \(C\) is \emph{compact} if any open cover admits a finite subcover.
That is, if for any family \(\mathcal A\) of open sets such that \(C\subseteq\bigcup_{A\in\mathcal A}A\), there exists a finite subfamily \(\mathcal B\subseteq\mathcal A\) such that \(C\subseteq\bigcup_{A\in\mathcal B}A\).
Importantly, much like boundedness, compactness does not depend on the ambient space:

\begin{lemma}[Compactness is an ``intrinsic'' property]\label{thm:cpt}%
	Let \(C\subseteq X\subseteq\R^n\), and equip \(X\) with the subspace topology inherited from \(\R^n\).
	Then \(C\) is compact in \(X\) if and only if \(C\) is compact in \(\R^n\).
\end{lemma}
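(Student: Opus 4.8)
The plan is to argue directly from the open-cover definition of compactness recalled just above, using only the characterization of the subspace topology: a subset of \(X\) is open in \(X\) precisely when it is of the form \(X\cap U\) for some \(U\) open in \(\R^n\). Both implications then reduce to transporting an open cover back and forth between the two topologies, the key bookkeeping fact being that \(C\subseteq X\).

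For the implication ``compact in \(\R^n\) \(\Rightarrow\) compact in \(X\)'': let \(\mathcal A\) be a family of sets open in \(X\) with \(C\subseteq\bigcup_{A\in\mathcal A}A\). Write each \(A=X\cap U_A\) with \(U_A\) open in \(\R^n\). Since \(C\subseteq X\), one has \(C\subseteq\bigcup_{A\in\mathcal A}U_A\), so \(\{U_A\}_{A\in\mathcal A}\) is an open cover of \(C\) in \(\R^n\); extracting a finite subcover \(U_{A_1},\dots,U_{A_m}\) and intersecting with \(X\) (again using \(C\subseteq X\)) yields the finite subfamily \(A_1,\dots,A_m\subseteq\mathcal A\) that covers \(C\).

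For the converse, let \(\mathcal A\) be a family of sets open in \(\R^n\) covering \(C\). Then \(\{X\cap U\}_{U\in\mathcal A}\) is a cover of \(C\) by sets open in \(X\); a finite subcover \(X\cap U_1,\dots,X\cap U_m\) gives, a fortiori, the finite subcover \(U_1,\dots,U_m\) of \(C\) in \(\R^n\), since \(X\cap U_j\subseteq U_j\).

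There is essentially no obstacle: this is a standard fact about subspace topologies, and its only content is the observation that, because \(C\subseteq X\), every cover of \(C\) by ambient open sets and its trace on \(X\) are interchangeable. It is worth recording as a lemma mainly to fix the convention, used throughout \cref{sec:analysis}, that compactness---unlike, say, closedness---may be tested indifferently in \(X\) or in \(\R^n\).
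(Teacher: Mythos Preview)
Your proof is correct and follows essentially the same approach as the paper's own proof: both argue directly from the open-cover definition by transporting covers between the ambient and subspace topologies via intersection with \(X\), using \(C\subseteq X\) throughout. The only cosmetic difference is that you establish the two implications in the opposite order.
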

\begin{proof}
	Suppose that \(C\) is compact in \(X\).
	Let \(\mathcal A\) be a collection of open sets in \(\R^n\) such that \(C\subseteq\bigcup_{A\in\mathcal A}A\).
	Then, since \(C\subseteq X\) one also has that \(C\subseteq\bigcup_{A\in\mathcal A}(A\cap X)\).
	Each set \(A\cap X\) is open in \(X\), hence there exist finitely many \(A_1,\dots,A_N\in\mathcal A\) such that
	\(
		C
	\subseteq
		\bigcup_{i=1}^N(A_i\cap X)
	\subseteq
		\bigcup_{i=1}^NA_i
	\).
	Thus, we have extracted a finite cover of open sets in \(\R^n\).

	Conversely, suppose that \(C\) is compact in \(\R^n\), and let \(\mathcal B\) be a collection of open sets in \(X\) such that \(C\subseteq\bigcup_{B\in\mathcal B}B\).
	Each of these sets is of the form \(B=A\cap X\) for some open set \(A\) of \(\R^n\), hence
	\(C\subseteq\bigcup_{B\in\mathcal B}B=\bigcup_{A\in\mathcal A}A\) where \(\mathcal A\coloneqq\set{A \text{ open in }\R^n}[A\cap X\in\mathcal B]\) is a collection of open sets in \(\R^n\).
	By compactness in \(\R^n\), there exists finitely many \(A_1,\dots,A_N\in\mathcal A\) such that
	\(
		C
	\subseteq
		\bigcup_{i=1}^NA_i
	\).
	Since \(C\subseteq X\), one also has that
	\(
		C
	\subseteq
		\bigcup_{i=1}^N(A_i\cap X)
	\).
	Since \(A_i\cap X\in\mathcal B\) for all \(i\), we have identified a finite subcover of \(\mathcal B\).
\end{proof}

In light of this equivalence, we may unambiguously talk about compactness without specifying the ambient space.
Thus, compactness of \(C\) is equivalent to closedness and boundedness in \(\R^n\).

		\subsection{Set-valued operators}\label{sec:setvalued}%
			The notation \(\ffunc{T}{X}{Y}\) indicates that \(T\) is a \emph{set-valued operator} mapping points \(x\in X\) to sets \(T(x)\subseteq Y\).
The \emph{graph} of \(T\) is the subset of \(X\times Y\) defined as \(\graph T\coloneqq\set{(x,y)\in X\times Y}[y\in T(x)]\), while its \emph{(effective) domain} is \(\dom T\coloneqq\set{x\in X}[T(x)\neq\emptyset]\) and its \emph{range} is \(\range T\coloneqq\bigcup_{x\in X}T(x)\).
The \emph{inverse} of \(T\) is always defined as the set-valued mapping \(\ffunc{T^{-1}}{Y}{X}\) given by \(T^{-1}(y)=\set{x\in X}[y\in T(x)]\) for every \(y\in Y\).

\begin{definition}[local boundedness]%
	A set-valued operator \(\ffunc{T}{X}{Y}\) is said to be \emph{locally bounded} at a point \(x\in X\) if there exists a neighborhood \(\Nb_x\) of \(x\) in \(X\) such that \(T(\Nb_x)\) is bounded.
	If this holds for every \(x\in X\) then we simply say that \(T\) is \emph{locally bounded}.
\end{definition}

Here and throughout, for a set \(U\subseteq X\) we denote \(T(U)\coloneqq\bigcup_{x\in U}T(x)\).

\begin{definition}[outer semicontinuity]\label{def:osc}%
	A set-valued operator \(\ffunc{T}{X}{Y}\) is said to be \emph{outer semicontinuous} (osc) at a point \(x\in X\) if
	\begin{equation}\label{eq:osc}
		\set{y\in Y}[\exists\seq{x^k,y^k}\to(x,y) \text{ with }(x^k,y^k)\in\graph T\ \forall k]
	\subseteq
		T(x).
	\end{equation}
	When this holds at every \(x\in X\), which is equivalent to \(\graph T\) being closed in \(X\times Y\), we simply say that \(T\) is osc.
\end{definition}

We remark that an operator \(X \rightrightarrows Y\) may be osc relative to the target set \(Y\), but this property can be lost if \(Y\) is extended to the full space \(\R^n\).
The converse, however, always holds: if the operator with target set \(\R^n\) is osc, then it is also osc whenever the target set is shrunk to any subset \(Y\) (containing the range).

Thus, the choice of the target set \(Y\) is crucial.
In fact, the two notions coincide whenever \(Y\) is \emph{complete}, i.e., when \(Y\) is closed in \(\R^n\).
The following example illustrates how the lack of closedness of \(Y\) is precisely what causes the discrepancy described above.

\begin{example}[osc: role of the target space \(Y\)]\label{ex:osc:Y}%
	Consider \(\ffunc{T_1}{[0,1]}{(0,1]}\) and \(\ffunc{T_2}{[0,1]}{[0,1]}\) defined as
	\[
		T_1(x)=T_2(x)
	=
		\begin{cases}
			\set{x} & x\in(0,1]\\
			\set{\tfrac{1}{2}} & x=0.
		\end{cases}
	\]
	While these operators are essentially the same, the difference in the target spaces results in \(T_1\) being osc, but not \(T_2\).
	This is because \(\graph T_1=\graph T_2\) are closed relative to \([0,1]\times (0,1]\), but not relative to \([0,1]\times[0,1]\).

	More specifically, \((x^k,y^k)\in\graph T_1=\graph T_2\) with \(x^k=y^k=\nicefrac{1}{k}\), \(k\in\N_{\geq1}\), violates outer semicontinuity of \(T_2\) as \(k\to\infty\), since \((x^k,y^k)\) converges to \((0,0)\notin\graph T_2\) in \([0,1]\times[0,1]\).
	On the other hand, the same sequence is of no concern to \(T_1\), since it is \emph{not convergent} in the space \([0,1]\times(0,1]\).
\end{example}

The set on the left-hand side of \eqref{eq:osc} corresponds to a Painlevé--Kuratowski outer limit \cite[Def.\ 4.1]{rockafellar1998variational} in the space \(X\times Y\).
Differently from the more common formulation as in \cite[Def.\ 5.4]{rockafellar1998variational} which assumes \(Y = \R^n\), our definition extends the standard notion by explicitly allowing arbitrary \(Y \subseteq \R^n\); see \cite[Sec. 2.5]{burachik2008setvalued} for further generalizations to topological spaces.

While this generalization may seem pedantic, it is conceptually important.
The choice of target space does not affect the validity of our forthcoming results: under minimal working assumptions, the (left) proximal mapping we study will be shown to be outer semicontinuous whether it is viewed as mapping into \(\dom\kernel\) or into the entire \(\R^n\).
Nonetheless, explicitly accounting for a restricted codomain is essential in our framework, which consistently emphasizes reasoning within the \emph{natural domains and codomains} of the operators involved.
In particular, it reinforces our central claim: all relevant topological properties---even those traditionally stated in terms of ambient space behavior---can be fully understood by examining the operator solely within its intrinsic (co)domain of definition.

For future reference, we list two trivial properties whose proof is self-apprent.

\begin{fact}\label{thm:obvious}%
	For \(\ffunc{T}{X}{Y}\) and \(x\in X\), the following hold:
	\begin{enumerate}
	\item \label{thm:lb=>bv}%
		If \(T\) is locally bounded at \(x\), then \(T(x)\) is bounded.
	\item \label{thm:osc=>cv}%
		{\upshape \cite[Rem. 2.5.2]{burachik2008setvalued}} If \(T\) is osc at \(x\), then \(T(x)\) is closed in \(Y\).
	\end{enumerate}
\end{fact}

We next provide another notion of continuity for set-valued operators that, as we shall see, strengthens outer semicontinuity whenever \(T\) maps each \(x\in X\) to a (possibly empty) set \(T(x)\) which is closed in \(Y\).

\begin{definition}[upper semicontinuity]\label{def:usc}%
	\(\ffunc{T}{X}{Y}\) is said to be \emph{upper semicontinuous} (usc) at a point \(x\in X\) if for any open set \(V\supseteq T(x)\) there exists a neighborhood \(\Nb_x\) of \(x\) in \(X\) such that \(T(\Nb_x)\subseteq V\).\footnote{%
		Note that the statement is unaffected if one replaces the set \(V\) with \(V\cap Y\).
		That is, \(V\) can either be open in \(\R^n\) or \emph{relative to \(Y\)}.
	}%
	When this holds at every \(x\in X\), we simply say that \(T\) is usc.
\end{definition}

The interested reader is referred to the comprehensive book \cite{burachik2008setvalued} for a very general account of these and many other properties of set-valued mappings.
We however point out a slight imprecision in the definitions \cite[Def. 2.5.1 and 2.5.15]{burachik2008setvalued} of outer and upper semicontinuity and local boundedness therein, which only apply to points \emph{in the domain} of the operator.
We argue that such definitions do make sense also at points outside of the domain, and that an operator \(\ffunc{T}{X}{Y}\) is to be considered locally bounded, osc or usc when it is so at any point \(x\in X\), and not merely at any \(x\in\dom T\).
For instance, neglecting points outside of the domain would result in the equivalence of outer semicontinuity of \(T\) and the closedness of \(\graph T\) to fail.\footnote{%
	As a simple illustration, let \(X=Y=\R\) and define \(T(x)=\set{0}\) if \(x>0\) and \(T(x)=\emptyset\) otherwise;
	\(T\) would qualify as an osc operator according to \cite[Def. 2.5.1]{burachik2008setvalued}, yet \(\graph T=(0,\infty)\times\set{0}\) is not closed in \(X\times Y\).
	The equivalence between outer semicontinuity and closedness of the graph is explicitly stated in \cite[Thm. 2.5.4]{burachik2008setvalued}; we believe that the incorrect passage in the proof lies in its last sentence, since the contradiction therein arises only provided that \(x\in\dom T\).
}

\begin{lemma}\label{thm:usclb}%
	Suppose that \(\ffunc{T}{X}{Y}\) is usc.
	Then \(T\) is locally bounded at \(x\in X\) if and only if \(T(x)\) is bounded.
\end{lemma}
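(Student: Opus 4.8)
The plan is to establish the two implications separately, observing that only the ``if'' direction actually uses upper semicontinuity. The ``only if'' direction is immediate: if $T$ is locally bounded at $x$, there is a neighborhood $\Nb_x$ of $x$ in $X$ with $T(\Nb_x)$ bounded, and since $x\in\Nb_x$ we have $T(x)\subseteq T(\Nb_x)$, whence $T(x)$ is bounded. This step needs neither the usc hypothesis nor convexity of $X$ or $Y$.

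For the ``if'' direction, suppose $T(x)$ is bounded. The idea is to enclose $T(x)$ in a bounded open set and then feed that set into the defining property of usc. Concretely, since $T(x)$ is bounded one may choose a bounded open set $V\subseteq\R^n$ with $T(x)\subseteq V$ (for instance an open ball of sufficiently large radius; if $T(x)=\emptyset$, any such ball works). Applying \cref{def:usc} at $x$ — using, as recorded in the footnote to that definition, that $V$ may be taken open in $\R^n$ rather than relative to $Y$ — yields a neighborhood $\Nb_x$ of $x$ in $X$ with $T(\Nb_x)\subseteq V$. As $V$ is bounded, so is $T(\Nb_x)$, which is exactly local boundedness of $T$ at $x$.

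I do not expect any genuine obstacle here. The only points warranting a line of care are the degenerate case $T(x)=\emptyset$ (handled uniformly above, since any bounded open set — or even $V=\emptyset$ — serves as the enclosing set) and the mild bookkeeping of whether the open set in the definition of usc lives in $\R^n$ or in $Y$, which is a non-issue by the cited footnote.
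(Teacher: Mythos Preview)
Your proof is correct and follows essentially the same approach as the paper's own proof: the forward implication is dismissed as obvious (independent of usc), and the converse proceeds by enclosing the bounded set \(T(x)\) in a bounded open set \(V\) and invoking the definition of upper semicontinuity to obtain a neighborhood \(\Nb_x\) with \(T(\Nb_x)\subseteq V\). Your additional remarks on the empty case and on whether \(V\) is open in \(\R^n\) or relative to \(Y\) are sound but not needed beyond what the paper already records.
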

\begin{proof}
	That local boundedness implies boundedness is obvious, in fact regardless of upper semicontinuity; see \cref{thm:lb=>bv}.
	Conversely, suppose that \(T(x)\) is bounded, and let \(V\supseteq T(x)\) be a bounded open set.
	By upper semicontinuity, there exists a neighborhood \(\Nb_x\) of \(x\) such that \(T(\Nb_x)\subseteq V\), which yields the sought local boundedness of \(T\) at \(x\).
\end{proof}

The key points in the following \cref{thm:oscusc} are special cases of more general statements in \cite{burachik2008setvalued}, which extend \cite[Thm. 5.19]{rockafellar1998variational} by relating outer and upper semicontinuity for operators \(X\rightrightarrows Y\).
A complete proof will require the following observation:

\begin{lemma}\label{thm:domTclosed}%
	For \(\ffunc{T}{X}{Y}\) and a point \(x\in X\setminus\dom T\), the following are equivalent:
	\begin{enumerateq}
	\item \label{thm:domTclosed:usc}%
		\(T\) is usc at \(x\).
	\item \label{thm:domTclosed:int}%
		\(x\) belongs to the interior of \(X\setminus\dom T\) (relative to \(X\)).
	\end{enumerateq}
	When \(Y\) is closed, the following can be added to the equivalence:
	\begin{enumerateq}[resume]
	\item \label{thm:domTclosed:osclb}%
		\(T\) is osc and locally bounded at \(x\).
	\end{enumerateq}
	In particular, if either \(Y\) is closed and \(T\) is osc and locally bounded, or if \(T\) is usc, then \(\dom T\) is closed (relative to \(X\)).
\end{lemma}
\begin{proof}~
	\begin{itemize}
	\item ``\ref{thm:domTclosed:usc} \(\Leftrightarrow\) \ref{thm:domTclosed:int}''
		Obvious (take \(V=\emptyset\) in \cref{def:usc}).
	\item ``\ref{thm:domTclosed:int} \(\Rightarrow\) \ref{thm:domTclosed:osclb}''
		This too is obvious (and holds independently of whether \(Y\) is closed in \(\R^n\)), since \(T(x')=\emptyset\) for all \(x'\in X\) close to \(x\).
	\end{itemize}
	To conclude, suppose that \(Y\) is closed in \(\R^n\).
	\begin{itemize}
	\item ``\ref{thm:domTclosed:osclb} \(\Rightarrow\) \ref{thm:domTclosed:int}''
		Contrary to the claim, suppose that \(x\) is an accumulation point of \(\dom T\), and consider a sequence \(\seq*{x^k,y^k}\) with \(x^k\to x\) and \(y^k\in T(x^k)\) for all \(k\in\N\).
		By local boundedness at \(x\), we may extract a bounded subsequence \(\seq*{y^k}_{k\in K}\) for some index set \(K\subseteq\N\).
		Up to further extracting if necessary, such sequence converges to a point \(y\in\R^n\); the assumption of closedness of \(Y\) ensures that \(y\in Y\) (that is, \(\seq*{y^n}\) also converges in \(Y\)).
		Outer semicontinuity at \(x\) implies that \(y\in T(x)\), contradicting that \(x\notin\dom T\).
	\qedhere
	\end{itemize}\let\qed\relax
\end{proof}

We are now ready to relate outer and upper semicontinuity at any point \(x\in X\); as hinted in \cref{thm:domTclosed}, one direction will require local boundedness and that \(Y\) be closed in \(\R^n\) (so as to make \(Y\) a \emph{complete} topological space); the converse one will instead require that the operator is closed-valued.

\begin{lemma}[osc vs usc]\label{thm:oscusc}%
	For \(\ffunc{T}{X}{Y}\) and \(x\in X\), the following hold:%
	\begin{enumerate}
	\item \label{thm:oscusc:usccv=>osc}%
		{\upshape \cite[Prop. 2.5.21(i)]{burachik2008setvalued}} If \(T\) is usc at \(x\) and \(T(x)\) is closed in \(Y\), then \(T\) is osc at \(x\).
	\item \label{thm:oscusc:osclb=>usc}%
		{\upshape \cite[Prop. 2.5.24(i)]{burachik2008setvalued}} If \(x\in\dom T\) and \(T\) is osc and locally bounded at \(x\), then \(T\) is usc at \(x\).
		When \(Y\) is closed in \(\R^n\), this is also true for \(x\notin\dom T\).%
	\item \label{thm:oscusc:Yclosed}%
		Suppose that \(Y\) is closed in \(\R^n\).
		Then, \(T\) is osc and locally bounded at \(x\) iff \(T\) is usc at \(x\) and \(T(x)\) is compact.
	\end{enumerate}%
\end{lemma}
\begin{proof}~
	\begin{itemize}
	\item ``\ref{thm:oscusc:usccv=>osc}''
		While \cite[Prop. 2.5.21(i)]{burachik2008setvalued} involves global properties, the same proof applies to the localized version stated here.
		For completeness, we include a streamlined argument adapted to our simplified setting.
		Contrary to the claim, suppose that \eqref{eq:osc} is violated at \(x\).
		Then, there exist \(y\in Y\setminus T(x)\) and a sequence \(\graph T\ni(x^k,y^k)\to(x,y)\).
		Since \(T(x)\) is closed in \(Y\), there exists \(\varepsilon>0\) such that \(\cball(y,\varepsilon)\cap T(x)=\emptyset\).
		The set \(V\coloneqq Y\setminus\cball(y,\varepsilon)\) is open in \(Y\) and contains \(T(x)\).
		As such, by upper semicontinuity there exists a neighborhood \(\Nb_x\) of \(x\) in \(X\) such that \(T(\Nb_x)\subseteq V\).
		Up to discarding early iterates, one has that \(x^k\in\Nb_x\) and \(y^k\in\cball(y,\varepsilon)\) for all \(k\).
		The fact that \(y^k\in T(x^k)\subseteq T(\Nb_x)\subseteq V=Y\setminus\cball(y,\varepsilon)\) contradicts the fact that \(y^k\in\cball(y,\varepsilon)\).

	\item ``\ref{thm:oscusc:osclb=>usc}''
		\cite[Prop. 2.5.24(i)]{burachik2008setvalued} shows the claim when \(x\in\dom T\), while the case \(x\notin\dom T\) is covered by \cref{thm:domTclosed}.

	\item ``\ref{thm:oscusc:Yclosed}''
		If \(T\) is osc and locally bounded at \(x\), then it is usc at \(x\) by assertion \ref{thm:oscusc:osclb=>usc}, with \(T(x)\) closed in \(Y\) and bounded by \cref{thm:obvious}.
		Since \(Y\) is closed, \(T(x)\) is also closed relative to \(\R^n\), and thus compact.
		Conversely, if \(T\) is usc at \(x\) and \(T(x)\) is compact, then this set is also bounded and thus \(T\) is locally bounded at \(x\) by \cref{thm:usclb}.
		Since \(T(x)\) is also closed (in \(\R^n\), hence relative to \(Y\) in particular), outer semicontinuity follows from assertion \ref{thm:oscusc:usccv=>osc}.%
	\qedhere
	\end{itemize}\let\qed\relax
\end{proof}

Closedness of \(Y\) plays an important role for the implication in \cref{thm:oscusc:Yclosed} to hold:

\begin{example}[osc vs usc I: closedness of \(Y\)]\label{ex:oscusc:Y}%
	Consider the operator \(T_1\) as in \cref{ex:osc:Y}.
	Despite being osc and closed-valued (in fact, \emph{compact-valued} and locally bounded), \(T_1\) is however not usc at \(x=0\): the open set \(V=(\frac{1}{4},\frac{3}{4})\) contains \(T_1(0)=\set{\frac{1}{2}}\), yet any neighborhood \(\Nb_x\) of \(x\) in \(X\) contains a sufficiently small right interval \([0,\varepsilon)\) whose image under \(T_1\) contains \((0,\varepsilon)\not\subseteq V\).
\end{example}

We will see in the next section that, under minimal working assumptions, the left proximal mapping is usc, locally bounded, and osc (on \(\interior\dom\kernel\)!).
Recall that outer semicontinuity corresponds to the \emph{closedness of the graph} in the product topology of \(\X \times \Y\).
For the proximal mapping, not only is the graph closed in \(\X \times \Y\), but also in the larger space \(\X \times \R^n\).

At first glance, this might seem to undermine our general stance of reasoning solely within the product space \(\X \times \Y\), using the topologies induced from \(\R^n\).
Indeed, in general topology, a set that is closed in a subspace need not be closed in the ambient space---so our framework might appear limited in capturing such global properties.
The next result will demonstrate that this is not the case:
the graph's closedness in the ambient space \(\X \times \R^n\) can, in fact, be characterized from topological properties within \(\X \times \Y\) alone.
The key lies in the \emph{compactness} of the images of the mapping---a topological property that is preserved when passing from a subspace to a superspace.
Since compact sets are closed in \(\R^n\), this allows us to \emph{recover closedness in the ambient space without stepping outside our induced-topology framework}.

\begin{corollary}\label{thm:usccpt}%
	For any \(\ffunc{T}{X}{Y}\), the following are equivalent:
	\begin{enumerateq}
	\item
		\(T\) is usc and compact-valued.
	\item
		\(T\) is locally bounded and \(\graph T\) is closed in \(X\times\R^n\).
	\item
		\(T\) is locally bounded and osc, and remains so even if the target space \(Y\) is extended to the whole \(\R^n\).
	\end{enumerateq}
\end{corollary}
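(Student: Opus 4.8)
The plan is to establish the cycle of implications (a) \(\Rightarrow\) (b) \(\Rightarrow\) (c) \(\Rightarrow\) (a), exploiting throughout that both upper semicontinuity (cf.\ the footnote to \cref{def:usc}) and local boundedness make no reference to the codomain, so that \cref{thm:usclb,thm:oscusc} may be invoked with the target space taken to be all of \(\R^n\).

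For (a) \(\Rightarrow\) (b): assuming \(T\) usc and compact-valued, each image \(T(x)\) is in particular bounded, so \cref{thm:usclb} yields local boundedness of \(T\); and each \(T(x)\), being compact, is closed in \(\R^n\) (by \cref{thm:cpt} and the Heine--Borel characterization recorded after it). Viewing \(T\) as an operator \(\ffunc{T}{X}{\R^n}\) --- which changes neither its graph nor its upper semicontinuity --- the forward implication of \cref{thm:oscusc}, applied with \(\R^n\) in the role of \(Y\), gives that \(T\) is osc into \(\R^n\), i.e.\ that \(\graph T\) is closed in \(X\times\R^n\); together with local boundedness this is precisely (b).

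For (b) \(\Rightarrow\) (c): local boundedness does not mention the codomain and so is retained when the target is enlarged to \(\R^n\); meanwhile closedness of \(\graph T\) in \(X\times\R^n\) is exactly outer semicontinuity of \(T\) regarded as an operator into \(\R^n\), and intersecting with \(X\times Y\) shows \(\graph T\) is closed in \(X\times Y\) as well, i.e.\ \(T\) is osc into \(Y\). For (c) \(\Rightarrow\) (a): regarding \(T\) as \(\ffunc{T}{X}{\R^n}\), it is locally bounded and osc and the codomain \(\R^n\) is trivially closed, so the converse implication of \cref{thm:oscusc} furnishes that \(T\) is usc and that each \(T(x)\) is closed in \(\R^n\); local boundedness then forces each \(T(x)\) to be bounded, hence closed and bounded in \(\R^n\), hence compact, and since \(T(x)\subseteq Y\) this is the compact-valuedness required in (a).

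I expect the only delicate point --- and precisely the one this corollary is meant to isolate --- to be the justification for passing to the enlarged codomain \(\R^n\) when invoking \cref{thm:oscusc}: one must confirm that the relevant hypotheses genuinely transfer (they do, since usc and local boundedness are codomain-independent) and, above all, that compactness of the images is exactly what upgrades ``closed in the subspace \(Y\)'' to ``closed in the ambient \(\R^n\)'', since compact sets are closed in \(\R^n\). Beyond this bookkeeping there is no computation to speak of.
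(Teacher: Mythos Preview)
Your proof is correct and follows essentially the same approach as the paper. The paper simply notes that the equivalence of (b) and (c) is obvious, and derives the equivalence of (a) and (b) from \cref{thm:usclb,thm:oscusc,thm:cpt} via the same observation you make explicit---that compactness in \(Y\) coincides with compactness in \(\R^n\), i.e.\ closedness and boundedness in \(\R^n\)---so your cycle is just a more detailed unpacking of the same argument.
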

\begin{proof}
	The equivalence of the last two assertions is obvious.
	That of the first two follows from \cref{thm:oscusc:Yclosed} with \(Y\) being replaced by \(\R^n\).
\end{proof}

We have seen in \cref{ex:oscusc:Y} that closedness of \(Y\) is fundamental for outer semicontinuity to imply upper semicontinuity, even if \(T\) is locally bounded and compact valued.
The following example shows how local boundedness is also essential, as well as how changes to the domain of definition \(X\) may affect the validity of this property:

\begin{example}[osc vs usc II: local boundedness]\label{ex:oscusc:lb}%
	Let \(A\subseteq\R\) be any compact set.
	Consider \(\ffunc{T_1}{(0,1)}{\R}\) and \(\ffunc{T_2}{[0,1)}{\R}\), where \(T_2(0)=A\) and
	\[
		T_1(x)=T_2(x)=\set{\tfrac{1}{x}}
	\quad\text{for }
		x\in(0,1).
	\]
	Note that if \(A=\emptyset\) one has that \(\graph T_1=\graph T_2\).
	Regardless of this choice, both operators are osc, but only \(T_1\) is locally bounded.
	As such, \(T_1\) is also usc as ensured by \cref{thm:oscusc:osclb=>usc}.
	On the contrary, failure of local boundedness results in \(T_2\) not being usc, despite it being osc.
\end{example}

\begin{figure}[h]
	\centering
	\fbox{\parbox{0.86\linewidth}{\centering
		\begin{tikzpicture}[
	node distance = 0.25cm and 1cm,
	lwidth/.style = {line width = 0.75pt},
	below*/.style = {below = #1, yshift = -1cm},
	null/.style = {
		box,
		draw = none,
	},
	box/.style = {
		draw,
		inner sep = 5pt,
		rounded corners,
		minimum width = 1.5cm,
		minimum height = 0.75cm,
		align = center,
	},
	arrow/.style = {
		lwidth,
		> = {Stealth[length=5pt]},
		shorten > = 1pt,
		shorten < = 1pt,
	},
	arrow*/.style = {
		red,
		arrow,
		dashed,
	},
	ref*/.style = {
		midway,
		above,
		scale = 0.75,
	},
	ref/.style = {
		ref*,
		sloped,
	},
]

	\node[box] (gph)   {\(\graph\) closed in \(X\times\R^n\)};
	\node[box] (osc)   [right = of gph] {osc};
	\node[box] (lb)    [left  = of gph] {lb};

	\node[null] (center) [below* = of gph] {};
	\coordinate (lb_gph) at ($(lb.south east)!0.5!(gph.south west)$);
	\node[box] (usckv) at (lb_gph |- center) {usc k-v};

	\node[box] (usccv) [above = of center] {usc c-v};
	\node[box] (uscbv) [below = of center] {usc b-v};

	\node[box] (usclb) at (osc |- uscbv) {usc lb};

	\node[box] (osclb) at ($(osc)!0.5!(usclb)$) {osc lb};

	\draw[
		lwidth,
		decorate,
		decoration = {brace, amplitude = 6pt},
	] (usccv.south east) -- (uscbv.north east) node[midway, right, yshift=0.1pt] (usccbv) {};
	\draw[
		lwidth,
		decorate,
		decoration = {brace, amplitude = 6pt},
	] (gph.south west) -- (lb.south east) node[midway, below] (gphlb) {};

	\draw[arrow, ->, bend right = 20] (usckv.south east) to (uscbv.west);
	\draw[arrow, ->, bend left = 20] (usckv.north east) to (usccv.west);
	\draw[arrow, <->] (uscbv.east) -- node[ref]{\cref{thm:usclb}} (usclb.west);
	\draw[arrow, ->, out = 0, in = 135, bend right = 10] (usccv.east) to node[ref, below] {\cite[Prop. 2.5.21(i)]{burachik2008setvalued}} (osc.south west);
	\draw[arrow, ->] (usccbv.east) to[out=0, in=180] (osclb.west);
	\draw[arrow, ->] (osclb.north) -- (osc.south);

	\draw[arrow, -> ] ([yshift =  3pt]gph.east) -- ([yshift =  3pt]osc.west);
	\draw[arrow*,<- ] ([yshift = -3pt]gph.east) -- ([yshift = -3pt]osc.west);

	\draw[arrow*,->, bend left = 25] ([yshift=-1pt]osclb.west) to node[ref, pos=0.7]{\cref{thm:oscusc:Yclosed}} (usckv.east);

	\draw[arrow ,<->] (usckv.north) to[out=90, in=270] node[ref*, anchor = east] {\cref{thm:usccpt}} (gphlb.south);
	\draw[arrow, ->] (osclb.south) -- node[ref*, anchor = west] {\cite[Prop. 2.5.24(i)]{burachik2008setvalued}} (usclb.north);
\end{tikzpicture}

		\begin{tabular}[t]{@{}>{\bf}r<{:~}@{}l >{\bf}r<{:~}@{}l@{}}
			usc & upper semicontinuous
			&
			c-v & closed-valued
		\\
			osc & outer semicontinuous
			&
			b-v & bounded-valued
		\\
			lb & locally bounded
			&
			k-v & compact-valued
		\end{tabular}
	}}%
	\caption{%
		Interrelations among different properties for a set-valued operator \(\ffunc{T}{X}{Y}\) between nonempty subsets \(X,Y\subseteq\R^n\).
		Dashed red arrows represent implications that are valid provided that \(Y\) is closed in \(\R^n\).
		References are omitted when the corresponding implications are either obvious or directly follow from other implications.%
	}%
	\label{fig:oscusc}%
\end{figure}

		\subsection{Extended-real-valued functions}
			The \emph{(effective) domain} of an extended-real-valued function \(\func{f}{X}{\Rinf}\) is the set \(\dom f\coloneqq\set{x\in X}[f(x)<\infty]\), and \(f\) is said to be \emph{proper} if \(f\not\equiv\infty\) and \(f>-\infty\).
The \emph{epigraph} of \(f\) is \(\epi f\coloneqq\set{(x,\alpha)\in X\times\R}[f(x)\leq\alpha]\), and \(f\) is said to be \emph{lower semicontinuous} (lsc) if \(f(x)=\liminf_{x'\to x}f(x')\) holds for every \(x\in X\).

\begin{remark}
	Since \(f\) is \emph{defined} on \(X\), convergence \(x'\to x\) is meant \emph{relative to} \(X\), so that writing \(\liminf_{x'\to x}f(x')\) is to be read as \(\liminf_{X\ni x'\to x}f(x')\).
	Similar conventions will be employed throughout.
\end{remark}

Equivalently, \(f\) is lsc if \(\epi f\) is closed relative to \(X\times\R\).
The \emph{lower semicontinuous hull} of \(f\) is the lsc function \(\func{\closure f}{X}{\Rinf}\) defined as the pointwise supremum among all lsc functions \(X\to\Rinf\) majorized by \(f\), pointwise given by
\[
	\closure f(x)=\liminf_{x'\to x}f(x').
\]
Clearly, \(f\) is lsc iff \(f=\closure f\).

We say that \(f\) is \emph{level bounded} if its \emph{\(\alpha\)-sublevel set} \(\set{x\in X}[f(x)\leq\alpha]\) is bounded for any \(\alpha\in\R\).
The \emph{Fréchet subdifferential} of \(f\) is \(\ffunc{\rsubdiff f}{X}{\R^n}\) given by
\[
	\rsubdiff f(x)
\coloneqq
	\set{v\in\R^n}[
		\textstyle
		\liminf_{\substack{z\to x\\z\neq x}}
		\frac{
			f(z)-f(x)-\innprod{v}{z-x}
		}{
			\norm{z-x}
		}
	\geq
		0
	],
\]
and we call an element \(v\in\rsubdiff f(x)\) a \emph{regular subgradient} of \(f\) at \(x\).
The \emph{Mordukhovich (limiting) subdifferential} of \(f\) is \(\ffunc{\lsubdiff f}{X}{\R^n}\) given by
\[
	\lsubdiff f(x)
\coloneqq
	\set{v\in\R^n}[
		\exists(x^k,v^k)\in\graph\rsubdiff f,
		~
		k\in\N: (x^k,f(x^k),v^k)\to (x,f(x),v)
	]
\]
for \(x\in\dom f\), and \(\lsubdiff f(x)=\emptyset\) otherwise.

		\subsection{Convex analysis}
			The \emph{Fenchel (convex) subdifferential} of \(f\) is \(\ffunc{\fsubdiff f}{X}{\R^n}\), where
\[
	\fsubdiff f(x)
=
	\set{u\in\R^n}[f(x')\geq f(x)+\innprod{u}{x'-x} ~ \forall x'\in X].
\]
In particular, for any \(\bar\xi\in\R^n\) one has that
\begin{equation}\label{eq:Fermat}
	\bar x\in\argmin\bigl(f - \innprod{\bar\xi}{{}\cdot{}}\bigr)
\quad\Leftrightarrow\quad
	\bar\xi\in\fsubdiff f(\bar x)
\quad\Leftrightarrow\quad
	\bar x\in(\fsubdiff f)^{-1}(\bar\xi).
\end{equation}
The \emph{convex hull} of \(f\), denoted \(\conv f\), is the pointwise supremum among all convex functions \(X\to\Rinf\) majorized by \(f\), while the supremum of all such functions which are additionally lsc is the \emph{closed convex hull}, denoted \(\conv*f\).
In particular, one always has that
\begin{equation}\label{eq:convleq}
	\conv*f\leq\conv f\leq f
\quad\text{and}\quad
	\conv*f\leq\closure f\leq f.
\end{equation}

The \emph{convex hull} of a set \(E\subseteq\R^n\), also denoted \(\conv E\), is the intersection of all convex subsets of \(\R^n\) containing \(E\).\footnote{%
	Unlike the notions of closedness or openness---which depend on the underlying topology and therefore require a notion relative to \(X\) or \(Y\)---no such distinction is necessary for convexity.
	Since we assume that \(X\) and \(Y\) are convex sets themselves, any subset of \(X\) (or \(Y\)) that is convex in the usual sense is automatically ``convex relative to \(X\)'' (or \(Y\)), and vice versa.
}
The notation \(\relint E\) indicates the \emph{relative interior} of \(E\), namely the interior of \(E\) relative to its affine hull (in \(\R^n\)).

We also remind the definition of the \emph{Fenchel conjugate} of a function \(\func{h}{\R^n}{\Rinf}\) (defined on the whole space \(\R^n\)) as the convex and lsc function \(\func{h^*}{\R^n}{\Rinf}\) given by \(h^*(y)\coloneqq\sup_{x\in\R^n}\set{\innprod{x}{y}-h(x)}\), and that of its \emph{Fenchel biconjugate} \(h^{**}\coloneqq(h^*)^*\).
We also remind that \cite[Prop. 13.45]{bauschke2017convex}
\begin{equation}\label{eq:h**}
	\conv*h=h^{**}.
\end{equation}
We say that \(h\) is \emph{coercive} if \(\liminf_{\norm{x}\to\infty}h(x)=\infty\) (equivalently, if \(h\) is level bounded), and \emph{1-coercive} if the stronger property \(\liminf_{\norm{x}\to\infty}\frac{h(x)}{\norm{x}}=\infty\) holds.

Finally, throughout this work, we adopt a slight abuse of notation by defining the sum of a function \(\func{h}{\R^n}{\Rinf}\) and a function \(\func{f}{X}{\Rinf}\) as the function \(\func{(f+h)}{X}{\Rinf}\) given by \(f+h\coloneqq f+h\restr_X\), with similar conventions extending to analogous operations.

		\subsection{Canonical extensions to the whole space \texorpdfstring{\(\R^n\)}{Rn}}\label{sec:ext}%
			There is a natural way to extend set-valued operators \(X\rightrightarrows Y\) and extended-real-valued functions \(X\to\Rinf\) to the entire space \(\R^n\) while maintaining their fundamental properties.

\begin{definition}[canonical extension of a set-valued operator]\label{def:Text}%
	The \emph{canonical extension} of \(\ffunc{T}{X}{Y}\) is \(\ffunc{\tilde T}{\R^n}{\R^n}\) given by
	\[
		\tilde T(x)
	=
		\begin{cases}
			T(x) & \text{if } x\in X,\\
			\emptyset & \text{otherwise.}
		\end{cases}
	\]
\end{definition}

Note that \(T\) and \(\tilde T\) have same graph, (effective) domain, and range, and moreover one is locally bounded iff the other is.
In fact, it can be easily verified that \(\tilde T\) is the only extension \(\R^n\rightrightarrows\R^n\) preserving the graph, and consequently all these properties.
An important difference between \(T\) and \(\tilde T\), however, is that \(\tilde T\) may fail to be osc even when \(T\) is (consider \(\ffunc{T}{(0,1)}{\R}\) where \(T(x)=\set{0}\) for all \(x\in(0,1)\)).
Clearly, on the other hand, when \(\tilde T\) is osc then so is \(T\).
The restriction of the target set \(Y\), instead, affects the definition of the inverse, having \(\ffunc{T^{-1}}{Y}{X}\) as opposed to \(\ffunc{\tilde T^{-1}}{\R^n}{\R^n}\)
(incidentally, note that the inverse of the canonical extension is the canonical extension of the inverse).

A similar extension can be defined for extended-real-valued functions.

\begin{definition}[canonical extension of an extended-real-valued function]\label{def:fext}%
	The \emph{canonical extension} of \(\func{f}{X}{\Rinf}\) is \(\func{\tilde f}{\R^n}{\Rinf}\) given by
	\[
		\tilde f(x)
	=
		\begin{cases}
			f(x) & \text{if } x\in X,\\
			\infty & \text{otherwise.}
		\end{cases}
	\]
\end{definition}

Functions \(f\) and \(\tilde f\) have the same epigraph, effective domain, sublevel sets, and they share properties such as properness, convexity, and/or level boundedness.
Similarly to the operator case, \(\tilde f\) is easily seen to be the only extension \(\R^n\to\Rinf\) that preserves the epigraph, and in turn all the other properties.
Moreover, note that \(\fsubdiff\tilde f\), \(\rsubdiff\tilde f\), and \(\lsubdiff\tilde f\) are the respective canonical extensions of \(\fsubdiff f\), \(\rsubdiff f\), and \(\lsubdiff f\), in the operator sense of \cref{def:Text}.
Functions \(f\) and \(\tilde f\) also share the convex hull, but \emph{not} the \emph{closed} convex hull.
A primary difference among the two is indeed to be found in the notion of lower semicontinuity (consider \(\func{f}{(0,1)}{\Rinf}\) where \(f(x)=0\) for all \(x\in(0,1)\)), and all the derived properties, such as the lower semicontinuous hull and the closed convex hull.
That is, there can be lsc functions \(\func{f}{X}{\Rinf}\) whose canonical extension is not lsc (but not the other way round).

The following lemma nevertheless clarifies that the hulls of \(f\) and its canonical extension \(\tilde f\) coincide on the set \(X\).
This simple yet important observation allows us to apply known results, originally formulated for functions defined on the entire space \(\R^n\), within our specific setting.

\begin{lemma}\label{thm:fext}%
	Let \(\func{f}{X}{\Rinf}\) be proper and let \(\func{\tilde f}{\R^n}{\Rinf}\) be its canonical extension as in \cref{def:fext}.
	Then, the following hold:
	\begin{enumerate}
	\item \label{thm:clext}%
		\(\closure f=(\closure\tilde f)\restr_X\).
		In particular, \(f\) is lsc iff there exists an lsc (not necessarily proper) function \(\func{\hat f}{\R^n}{\Rinf}\) such that \(f=\hat f\restr_X\).
	\item \label{thm:cvxext}%
		\(
			\tilde f^{**}\restr_X=(\conv*\tilde f)\restr_X=\conv*f
		\leq
			\conv f=(\conv\tilde f)\restr_X
		\).
		The inequality holds as equality whenever \(\tilde f\) is 1-coercive.
	\item \label{thm:clcvx}%
		\(f\) is convex iff \(\tilde f\) is convex.
		In this case, one has that \(\conv*f=\closure f\).
	\end{enumerate}
\end{lemma}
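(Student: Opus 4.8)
The plan is to establish the three items in the order (i), (iii), (ii), since the argument for (ii) draws on both of the others. I begin with (i). The crux is the elementary identity \(\inf_{x'\in\cball(x,\delta)}\tilde f(x')=\inf_{x'\in\cball(x,\delta)\cap X}f(x')\), valid for every \(x\in X\) and \(\delta>0\): indeed \(\tilde f\) agrees with \(f\) on \(X\) and equals \(+\infty\) off \(X\), while \(x\in\cball(x,\delta)\cap X\neq\emptyset\), so points outside \(X\) never lower the infimum. Letting \(\delta\downarrow0\) gives \((\closure\tilde f)(x)=\liminf_{\R^n\ni x'\to x}\tilde f(x')=\liminf_{X\ni x'\to x}f(x')=\closure f(x)\); I stress that properness plays no role here, a fact reused below. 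For the ``in particular'' clause: if \(f\) is lsc then \(f=\closure f=(\closure\tilde f)\restr_X\), so \(\hat f\coloneqq\closure\tilde f\) works; conversely, if \(f=\hat f\restr_X\) with \(\hat f\) lsc on \(\R^n\), then for \(x\in X\), using \(\hat f=\closure\hat f\) and \(\hat f=f\) on \(X\), one gets \(f(x)=\liminf_{\R^n\ni x'\to x}\hat f(x')\le\liminf_{X\ni x'\to x}\hat f(x')=\closure f(x)\le f(x)\), forcing \(f=\closure f\).

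For (iii), since \(\epi\tilde f=\epi f\) and a function is convex exactly when its epigraph is, the equivalence is immediate (recall \(X\), hence \(X\times\R\), is convex). When \(f\) is convex, so is \(\tilde f\), hence so is \(\closure\tilde f\) (the lsc hull of a convex function is convex, its epigraph being the closure of a convex set); thus \(\closure f=(\closure\tilde f)\restr_X\) is an lsc convex minorant of \(f\), and since \eqref{eq:convleq} sandwiches it between \(\conv*f\) and \(f\), it must coincide with \(\conv*f\).

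For (ii): the equality \((\tilde f^{**})\restr_X=(\conv*\tilde f)\restr_X\) is \eqref{eq:h**} restricted to \(X\), and \(\conv*f\le\conv f\) is \eqref{eq:convleq}. To see \((\conv*\tilde f)\restr_X=\conv*f\), note ``\(\le\)'' holds because restricting the lsc convex minorant \(\conv*\tilde f\) of \(\tilde f\) to \(X\) yields an lsc convex minorant of \(f\); for ``\(\ge\)'', put \(g\coloneqq\conv*f\) and let \(\tilde g\) be its canonical extension, which is convex on \(\R^n\) by (iii) and satisfies \(\tilde g\le\tilde f\) (on \(X\) since \(g\le f\), off \(X\) trivially). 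Then \(\closure\tilde g\) is an lsc convex minorant of \(\tilde f\), so \(\closure\tilde g\le\conv*\tilde f\), and restricting to \(X\), using the properness-free identity of (i) together with the lower semicontinuity of \(g\), gives \(\conv*f=g=\closure g=(\closure\tilde g)\restr_X\le(\conv*\tilde f)\restr_X\). The identity \(\conv f=(\conv\tilde f)\restr_X\) follows from the same two-inequality scheme, now without any lsc-hull step: \((\conv\tilde f)\restr_X\) is a convex minorant of \(f\), giving ``\(\le\)'', while with \(h\coloneqq\conv f\) the canonical extension \(\tilde h\) is convex on \(\R^n\) by (iii) and satisfies \(\tilde h\le\tilde f\), giving \(\conv f=\tilde h\restr_X\le(\conv\tilde f)\restr_X\). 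Finally, if \(\tilde f\) is 1-coercive, then by the classical fact that 1-coercivity forces the convex hull to be lsc---equivalently, to equal the biconjugate---one has \(\conv\tilde f=\tilde f^{**}\), so \(\conv f=(\conv\tilde f)\restr_X=(\tilde f^{**})\restr_X=\conv*f\) and the inequality becomes an equality.

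I expect the main obstacle to be the ``\(\ge\)'' direction of \((\conv*\tilde f)\restr_X=\conv*f\): one cannot simply restrict, since the canonical extension of \(\conv*f\) may fail to be lsc, so one must pass through its lsc hull and then argue both that this is harmless on \(X\) (exactly what (i) provides) and that convexity survives the extension (exactly what (iii) provides). Everything else is bookkeeping, the sole genuinely external ingredient being the classical 1-coercivity fact invoked at the very end.
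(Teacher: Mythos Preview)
Your proof is correct and follows essentially the same approach as the paper: the liminf identity for (i), the epigraph characterization for the convexity equivalence in (iii), and the canonical-extension-plus-lsc-hull maneuver for the nontrivial inequality in (ii), with the 1-coercivity clause handled by the same classical fact. The only notable difference is organizational: the paper proves the items in order (i), (ii), (iii) and has (iii) rely on (ii), whereas you reverse the dependence by establishing (iii) first via a direct sandwich argument and then invoking it in (ii); you also take \(g=\conv*f\) directly rather than an arbitrary lsc convex minorant followed by a supremum, which is a slight streamlining but not a different idea.
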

\begin{proof}~
	\begin{itemize}
	\item ``\ref{thm:clext}''
		For \(x\in X\) one has that
		\[
			\closure\tilde f(x)
		\defeq
			\liminf_{x'\to x}\tilde f(x')
		=
			\liminf_{X\ni x'\to x}\tilde f(x')
		=
			\liminf_{x'\to x}f(x')
		\defeq
			\closure f(x),
		\]
		where the second identity follows from the fact that \(\tilde f\equiv\infty\) on \(\R^n\setminus X\), and the third one from the fact that \(\tilde f\restr_X=f\).
		The claim follows from the arbitrariness of \(x\in X\).

	\item ``\ref{thm:cvxext}''
		The inequality was observed in \eqref{eq:convleq}; the first identity follows from \eqref{eq:h**} and the last one is obvious.
		We now show that \(\conv*f=(\conv*\tilde f)\restr_X\).
		Observe that, as a function defined on \(X\), \((\conv*\tilde f)\restr_X\) is convex and lsc, and \((\conv*\tilde f)\restr_X\leq\tilde f\restr_X=f\) by definition.
		As such, \((\conv*\tilde f)\restr_X\leq\conv*f\).
		To show the converse inequality, let \(\func{g}{X}{\Rinf}\) be convex, lsc and such that \(g\leq f\), and let \(\func{\tilde g}{\R^n}{\Rinf}\) be its canonical extension.
		It follows from assertion \ref{thm:clext} that \(g=(\closure\tilde g)\restr_X\).
		Moreover, since \(\tilde g\) is convex (on \(\R^n\)), \(\closure\tilde g\) is convex and lsc by \cite[Cor. 9.10]{bauschke2017convex}, and globally supports \(\tilde f\).
		Consequently \(\conv*\tilde f\geq\closure\tilde g\), resulting in
		\[
			(\conv*\tilde f)\restr_X
		\geq
			(\closure\tilde g)\restr_X
		=
			g.
		\]
		Taking the supremum among all such functions \(g\) yields the sought inequality \(\conv*\tilde f\restr_X\geq\conv*f\).
		When \(\tilde f\) is 1-coercive, by invoking \cite[Lem. 3.3(i)]{benoist1996what} we conclude that \(\conv*\tilde f=\conv\tilde f\) in this case, and in particular equality holds on \(X\).

	\item ``\ref{thm:clcvx}''
		The equivalence of convexity of \(f\) and of \(\tilde f\) is straightforward.
		Next, suppose that \(f\) is convex.
		The claimed identity is true when \(X=\R^n\), see \cite[Cor. 9.10]{bauschke2017convex}, and in particular \(\conv*\tilde f=\closure\tilde f\).
		Combined with the previous assertions the claim follows.
	\qedhere
	\end{itemize}\let\qed\relax
\end{proof}

\begin{fact}[{\cite[Prop. 1.4.3]{hiriarturruty1996convex}}]\label{fact:fsubdiff}%
	Let \(\func{f}{X}{\Rinf}\) be proper and \(\bar x\in X\) be fixed.
	Consider the following statements:
	\begin{enumerateq}
	\item \label{fact:fsubdiff:1}%
		\(\fsubdiff f(\bar x)\neq\emptyset\);
	\item \label{fact:fsubdiff:2}%
		\(f(\bar x)=\conv* f(\bar x)\in\R\);
	\item \label{fact:fsubdiff:3}%
		\(\fsubdiff f(\bar x)=\lsubdiff(\conv* f)(\bar x)\).
	\end{enumerateq}
	One has \ref{fact:fsubdiff:1} \(\Rightarrow\) \ref{fact:fsubdiff:2} \(\Rightarrow\) \ref{fact:fsubdiff:3}.
\end{fact}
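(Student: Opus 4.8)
The plan is to prove the two implications in turn, using throughout the elementary reformulation of a Fenchel subgradient: $u\in\fsubdiff f(\bar x)$ holds precisely when the affine function $a\coloneqq f(\bar x)+\innprod{u}{{}\cdot{}-\bar x}$ --- which is finite-valued, convex, and continuous, hence lsc, on $X$ --- satisfies $a\leq f$ on $X$ together with $a(\bar x)=f(\bar x)$.

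For ``\ref{fact:fsubdiff:1} $\Rightarrow$ \ref{fact:fsubdiff:2}'', I would take $u\in\fsubdiff f(\bar x)$ with the associated affine minorant $a$ and note that $a$ belongs to the family of convex lsc functions majorized by $f$ whose supremum is $\conv* f$; hence $a\leq\conv* f\leq f$ on $X$, and evaluating at $\bar x$ sandwiches $f(\bar x)=a(\bar x)\leq\conv* f(\bar x)\leq f(\bar x)$, so $\conv* f(\bar x)=f(\bar x)$. This value is real: properness of $f$ rules out $-\infty$, while $f(\bar x)=+\infty$ would make the subgradient inequality force $f\equiv+\infty$, contradicting properness once more.

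For ``\ref{fact:fsubdiff:2} $\Rightarrow$ \ref{fact:fsubdiff:3}'', I would first observe that \ref{fact:fsubdiff:2} makes $\conv* f$ proper: as a pointwise supremum of convex lsc functions it is convex and lsc, and a convex lsc function attaining a finite value somewhere takes no value $-\infty$ --- alternatively, by \cref{thm:fext}, $\conv* f$ is the restriction to $X$ of $\tilde f^{**}$, which is proper as soon as it is finite somewhere. I would then split the claimed identity as
\[
	\fsubdiff f(\bar x)
=
	\fsubdiff(\conv* f)(\bar x)
=
	\lsubdiff(\conv* f)(\bar x)
\]
and treat each equality. For the first, ``$\subseteq$'' reuses the affine minorant $a$ of $f$, which also minorizes $\conv* f$ and, by \ref{fact:fsubdiff:2}, touches it at $\bar x$, so that $u\in\fsubdiff(\conv* f)(\bar x)$; ``$\supseteq$'' uses $\conv* f\leq f$ together with $\conv* f(\bar x)=f(\bar x)\in\R$ to convert a subgradient inequality for $\conv* f$ at $\bar x$ into one for $f$ at $\bar x$. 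For the second, I would invoke the standard fact that for a convex function $h$ on the convex set $X$ one has $\fsubdiff h=\rsubdiff h=\lsubdiff h$ pointwise; its proof is routine --- $\fsubdiff h\subseteq\rsubdiff h$ and $\rsubdiff h\subseteq\lsubdiff h$ are immediate, $\rsubdiff h(x)\subseteq\fsubdiff h(x)$ follows by testing the Fréchet $\liminf$ condition along the segments $(1-t)x+tz\in X$ and using convexity of $h$, and $\lsubdiff h(x)\subseteq\fsubdiff h(x)$ follows by passing to the limit in the convex subgradient inequalities at an approximating sequence $x^k\to x$ with $h(x^k)\to h(x)$.

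The step I expect to be the main obstacle is bookkeeping rather than anything deep: one must be careful that domains, limits, and test points are consistently interpreted \emph{relative to} $X$, not $\R^n$. This is where convexity of $X$ is genuinely used (to keep the segments $(1-t)x+tz$ in $X$ in the convex-subdifferential lemma), and where properness of $\conv* f$ matters (to exclude $\conv* f(x^k)=-\infty$ in the limiting-subdifferential step). It is also worth resisting the temptation to shortcut the second equality via the canonical extension, since $\lsubdiff$ computed relative to $X$ need not coincide with $\lsubdiff$ on $\R^n$ at boundary points of $X$; the direct argument above is the safe route.
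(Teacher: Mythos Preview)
Your proof is correct and self-contained; the paper takes a different route. Rather than arguing from first principles, the paper merely cites the \(\R^n\)-version from \cite{hiriarturruty1996convex} and asserts in one sentence that the statement transfers to functions on \(X\) by replacing \(f\) with its canonical extension \(\tilde f\), invoking \cref{thm:fext}. Your direct argument avoids relying on an external reference and sidesteps a genuine subtlety in that reduction: while \(\fsubdiff f(\bar x)=\fsubdiff\tilde f(\bar x)\) and \(\conv* f=(\conv*\tilde f)\restr_X\) are established in the paper, the remaining identification \(\lsubdiff(\conv* f)(\bar x)=\lsubdiff(\conv*\tilde f)(\bar x)\) is not immediate, because the canonical extension of \(\conv* f\) is \(\conv*\tilde f+\indicator_X\), not \(\conv*\tilde f\), so the paper's general principle ``\(\lsubdiff h=\lsubdiff\tilde h\restr_X\)'' does not apply directly. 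The equality does hold---both sides reduce to Fenchel subdifferentials of convex functions, and one checks \(\fsubdiff(\conv* f)(\bar x)=\fsubdiff(\conv*\tilde f)(\bar x)\) using \(\relint\dom\tilde f^{**}\subseteq X\) and lower semicontinuity---but the paper's one-line justification glosses over this. Your closing remark correctly anticipates the issue, though note that the obstruction is not that \(\lsubdiff f\) and \(\lsubdiff\tilde f\) disagree on \(X\) (they do agree, as the paper states), but rather that \(\conv* f\) and \(\conv*\tilde f\) are related by restriction, not by canonical extension.
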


As anticipated, the validity of \cref{fact:fsubdiff} for general functions \(X\to\Rinf\), as opposed to \(\R^n\to\Rinf\) only as considered in \cite{hiriarturruty1996convex}, is guaranteed by \cref{thm:fext}, since for any \(\bar x\in X\) one can replace any occurrence of \(f\) therein with the canonical extension \(\tilde f\).
Next, we provide a relation among the Fenchel subdifferentials of \(f\), \(\closure f\), and \(\conv*f\):
\begin{equation}\label{eq:fsubdiffs}
	\fsubdiff f(x)\subseteq\fsubdiff[\closure f](x)\subseteq\fsubdiff[\conv*f](x).
\end{equation}
Indeed, to see the first inclusion in \eqref{eq:fsubdiffs}, notice that for \(v\in\fsubdiff f(x)\) the function \(\X\ni x'\mapsto f(x)+\innprod{v}{x'-x}\) is an lsc minorant of \(f\).
By definition of \(\closure f\), it must also be a minorant of \(\closure f\), and as such one has that
\[
	\closure f(x')\geq f(x)+\innprod{v}{x'-x}
\quad
	\forall x'\in\X.
\]
The inequality holds in particular for \(x'=x\), which yields that \(f(x)\geq\closure f(x)\geq f(x)\).
It follows that \(f(x)\) in the above inequality can be replaced by \(\closure f(x)\), proving the sought inclusion \(v\in\fsubdiff[\closure f](x)\).
The second inclusion in \eqref{eq:fsubdiffs} can be justified in a similar manner.

\begin{lemma}\label{thm:argmincl}%
	For any function \(\func{f}{X}{\Rinf}\) with \(X\subseteq\R^n\) nonempty, one has that \(\inf f=\inf\closure f=\inf\conv*f\) and \(\argmin f\subseteq\argmin\closure f\subseteq\argmin\conv*f\).
\end{lemma}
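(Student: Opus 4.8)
The plan is to deduce the whole statement from the pointwise inequalities $\conv*f\le\closure f\le f$ already recorded in \eqref{eq:convleq}, by means of one elementary ``squeeze'' that I apply twice. First I would settle the infimum chain. The inequalities $\conv*f\le\closure f\le f$ give $\inf\conv*f\le\inf\closure f\le\inf f$ at once, so only the reverse directions need attention. For $\inf\closure f\ge\inf f$ I would argue pointwise: for every $x\in X$ one has $\closure f(x)=\liminf_{x'\to x}f(x')\ge\inf_{X}f$. For $\inf\conv*f\ge\inf f$ I would split on the value of $\inf f$: if $\inf f=-\infty$ there is nothing left to prove, and otherwise the constant function equal to $\inf f\in\R\cup\set{\infty}$ is convex, lsc, and majorized by $f$, hence does not exceed $\conv*f$. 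This yields $\mu\coloneqq\inf f=\inf\closure f=\inf\conv*f$. (Alternatively, the identity $\inf\conv*f=\inf f$ follows from $\conv*f=\tilde f^{**}\restr_X$ in \cref{thm:cvxext} together with the elementary fact that $\inf_{\R^n}\tilde f^{**}=\inf_{\R^n}\tilde f$, but the constant-function argument is more self-contained.)

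For the two $\argmin$ inclusions I would reuse the same squeeze. If $\bar x\in\argmin f$, so that $f(\bar x)=\mu$, then $\mu=\inf\closure f\le\closure f(\bar x)\le f(\bar x)=\mu$ forces $\closure f(\bar x)=\mu=\inf\closure f$, i.e. $\bar x\in\argmin\closure f$; this proves $\argmin f\subseteq\argmin\closure f$. Replaying the identical chain with the pair $(\closure f,\conv*f)$ in place of $(f,\closure f)$—now invoking $\conv*f\le\closure f$ and $\inf\conv*f=\mu$—gives $\argmin\closure f\subseteq\argmin\conv*f$.

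I do not expect a genuine obstacle here: the result is essentially bookkeeping around \eqref{eq:convleq}. The only point that calls for care is the behaviour in the extended-real-valued corner cases $\inf f=\pm\infty$ (and the related question of whether a constant $\pm\infty$ function counts as convex and lsc), which is precisely why the second half of the infimum argument is phrased as a case split; note that when $\inf f=\infty$ one has $f\equiv\infty$ and all three functions coincide, so that case is trivial.
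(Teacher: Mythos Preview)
Your proof is correct. The infimum chain is handled exactly as in the paper: the easy direction from \eqref{eq:convleq}, then the constant-function minorant for the reverse (with a trivial case when $\inf f=-\infty$). Your separate pointwise argument for $\inf\closure f\ge\inf f$ is a harmless redundancy, since the constant-function bound on $\conv*f$ already forces equality throughout via $\conv*f\le\closure f$.

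The $\argmin$ inclusions are where you diverge from the paper. You run a direct squeeze: from $\mu=\inf\closure f\le\closure f(\bar x)\le f(\bar x)=\mu$ conclude $\closure f(\bar x)=\mu$, and repeat with $(\closure f,\conv*f)$. The paper instead invokes the Fenchel-subdifferential apparatus set up just before: $\bar x\in\argmin f$ iff $0\in\fsubdiff f(\bar x)$ by \eqref{eq:Fermat}, and then the inclusion chain \eqref{eq:fsubdiffs} pushes $0$ into $\fsubdiff[\closure f](\bar x)$ and $\fsubdiff[\conv*f](\bar x)$. Your route is more elementary and fully self-contained---it needs nothing beyond \eqref{eq:convleq} and the already-established equality of infima---whereas the paper's route is a one-liner but leans on \cref{fact:fsubdiff} and the derived inclusions \eqref{eq:fsubdiffs}. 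Both are clean; yours is arguably the more transparent of the two for this particular statement.
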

\begin{proof}
	That \(\inf\conv*f\leq\inf\closure f\leq\inf f\) is obvious from \eqref{eq:convleq}, and equality thus must hold if the latter is \(-\infty\).
	If instead \(\inf f\) is finite, then the constant function \(\inf f\) is convex and lsc and is globally majorized by \(f\); as such, \(\inf f\leq\conv f\) and consequently \(\inf f\leq\inf\conv* f\), completing the proof.

	Finally, the claimed inclusions are apparent from \eqref{eq:fsubdiffs} and \eqref{eq:Fermat}.
\end{proof}

	\section{Bregman proximal map and Moreau envelope}\label{sec:Bprox}
		Throughout the paper, we will consider a function \(\kernel\) complying with the following minimal working assumption, assumed throughout without further mention; additional requirements will be invoked when needed.

\begin{center}
	\parbox{0.8\linewidth}{%
		\begin{tcolorbox}[
			colback = MidnightBlue!3,
			colframe = MidnightBlue!50,
			top = .5ex,
			bottom = .5ex,
			left = .25em,
			right = .25em,
		]%
			Function \(\func{\kernel}{\R^n}{\Rinf}\) is a \emph{distance generating function (dgf)}, namely, proper, lsc, convex, and differentiable on \(\interior\dom\kernel\neq\emptyset\).
			We denote \(\X\coloneqq\dom\kernel\) and \(\Y\coloneqq\interior\dom\kernel\), and similarly \(\X*\coloneqq\dom\kernel*\) and \(\Y*\coloneqq\interior\dom\kernel*\).
		\end{tcolorbox}%
	}%
\end{center}

The symbols \(\X\) and \(\Y\) are introduced for notational conciseness, and will also be helpful later when highlighting a parallel with the general theory of \(\Phikernel\)-convexity; see \cref{sec:Phiconj}.
The name dgf owes to the fact that \(\kernel\) induces a pseudo-distance on \(\R^n\) given by
\begin{equation}\label{eq:D}
	\D(x,y)
=
	\begin{cases}
		\kernel(x)-\kernel(y)-\innprod{\nabla\kernel(y)}{x-y} & \text{if } y\in\interior\dom\kernel
	\\
		\infty & \text{otherwise.}
	\end{cases}
\end{equation}
Apparently, \(\D\) quantifies the gap between the value of \(\kernel\) in its first argument and the first-order approximation at the second, which is zero when \(x=y\in\interior\dom\kernel\) and is otherwise always greater than or equal to zero due to the convexity assumed on \(\kernel\).
The same conclusion can be deduced from the identity \(\kernel(y)+\kernel*(\nabla\kernel(y))=\innprod{y}{\nabla\kernel(y)}\) holding for \(y\in\Y\), thanks to which we can interpret
\begin{equation}\label{eq:DFY}
	\D(x,y)
=
	\kernel(x)+\kernel*(\nabla\kernel(y))-\innprod{\nabla\kernel(y)}{x}
\qquad
	y\in\Y
\end{equation}
as a Fenchel duality gap.
This metric is widely known as the \emph{Bregman distance} induced by \(\kernel\), though it generally lacks symmetry and does not satisfy the triangular inequality, a reason why the term \emph{Bregman divergence} is often preferred.

Additional assumptions on \(\kernel\) induce favorable properties on \(\D\) and on related objects, and will sometimes be needed in our results.
Among these we mention \emph{Legendreness}.

\begin{definition}[Legendre function]%
	We say that \(\kernel\) is \emph{of Legendre type} (or simply \emph{Legendre}) if it is
	\begin{enumerate}
	\item
		\emph{essentially smooth}, namely differentiable on \(\interior\dom\kernel\neq\emptyset\) and such that \(\norm{\nabla\kernel(x^k)}\to\infty\) whenever \(\interior\dom\kernel\ni x^k\to x\in\boundary\dom\kernel\), and
	\item
		\emph{essentially strictly convex}, namely strictly convex on every convex subset of \(\dom\lsubdiff\kernel\).
	\end{enumerate}
\end{definition}

We next recall two fundamental characterizations of essential smoothness and strict convexity.

\begin{fact}[{\cite[Thm. 26.1 and 26.3]{rockafellar1970convex}}]%
	The following are equivalent:
	\begin{enumerateq}
	\item
		\(\kernel\) is essentially smooth;
	\item
		\(\lsubdiff\kernel\) is at most single valued;
	\item
		\(\kernel*\) is essentially strictly convex.
	\end{enumerateq}
	Under any of the above equivalent conditions, one also has that \(\dom\lsubdiff\kernel=\interior\dom\kernel\) and \(\lsubdiff\kernel(x)=\set{\nabla\kernel(x)}\) for any \(x\in\interior\dom\kernel\).
\end{fact}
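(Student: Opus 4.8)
The plan is to recall the mechanism behind these equivalences rather than reprove them in full: they are precisely \cite[Thm.\ 26.1 and 26.3]{rockafellar1970convex}, and the facts entering the argument recur below. I would establish the chain (i)\,\(\Leftrightarrow\)\,(ii)\,\(\Leftrightarrow\)\,(iii) and read off the final assertion along the way.

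For (i)\,\(\Leftrightarrow\)\,(ii) the first step is to separate \(\X=\dom\kernel\) into its interior \(\Y=\interior\dom\kernel\) and its boundary and argue on each. On \(\Y\), which coincides with \(\relint\dom\kernel\) because \(\interior\dom\kernel\neq\emptyset\), one has \(\Y\subseteq\dom\lsubdiff\kernel\), and a proper lsc convex function is differentiable at an interior point of its domain precisely when its subdifferential there is a singleton---in which case it equals the gradient; this turns single-valuedness of \(\lsubdiff\kernel\) on \(\Y\) into differentiability of \(\kernel\) on \(\Y\). On the boundary, the key point is that at \(x\in\boundary\dom\kernel\) the normal cone to \(\dom\kernel\)---nontrivial since \(\dom\kernel\) has nonempty interior---can be added to \(\lsubdiff\kernel(x)\) without changing it; hence \(\lsubdiff\kernel(x)\), if nonempty, contains a ray and is therefore neither a singleton nor bounded. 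Thus ``\(\lsubdiff\kernel\) at most single valued'' amounts to ``\(\lsubdiff\kernel\) single valued on \(\Y\) and empty off \(\Y\)''.

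Linking this boundary behaviour to the gradient blow-up is the remaining ingredient. Given \(\Y\ni x^k\to\bar x\in\boundary\dom\kernel\), if \(\seq{\nabla\kernel(x^k)}\) were bounded then a subsequential limit would, upon passing to the limit in \(\kernel(y)\geq\kernel(x^k)+\innprod{\nabla\kernel(x^k)}{y-x^k}\) and using lower semicontinuity of \(\kernel\) at \(\bar x\), belong to \(\lsubdiff\kernel(\bar x)\); conversely, any \(\bar v\in\lsubdiff\kernel(\bar x)\) is approached by \(\nabla\kernel(x^k)\) along such a sequence, so boundedness cannot be excluded when \(\lsubdiff\kernel(\bar x)\neq\emptyset\). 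Hence ``\(\lsubdiff\kernel\) empty off \(\Y\)'' is equivalent to the blow-up condition, and the argument simultaneously yields \(\dom\lsubdiff\kernel=\Y\) and \(\lsubdiff\kernel=\set{\nabla\kernel}\) on \(\Y\), i.e.\ the last claim of the statement.

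For (ii)\,\(\Leftrightarrow\)\,(iii) I would pass to the conjugate via \(\lsubdiff\kernel*=(\lsubdiff\kernel)^{-1}\) and the fact that \(\kernel\) equals its own Fenchel biconjugate (being proper, lsc and convex). One implication is elementary: if \(\kernel*\) is essentially strictly convex, then two distinct points of \(\dom\lsubdiff\kernel*\) cannot share a subgradient, for otherwise \(\kernel*\) would be affine along the segment joining them---a segment itself contained in \(\dom\lsubdiff\kernel*\)---contradicting strict convexity; hence \((\lsubdiff\kernel*)^{-1}=\lsubdiff\kernel\) is at most single valued. The converse is, I expect, the genuine obstacle: deducing from the pointwise single-valuedness of \(\lsubdiff\kernel\) that \(\kernel*\) is strictly convex on \emph{every} convex subset of \(\dom\lsubdiff\kernel*\) is the substance of \cite[Thm.\ 26.3]{rockafellar1970convex} and cannot be obtained from the bare subgradient manipulations above; it relies on the intrinsic characterization of essential strict convexity through the subdifferential inverse. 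In the paper's setting I would therefore invoke Rockafellar for this last step rather than recast the proof.
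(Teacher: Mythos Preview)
The paper does not prove this statement at all: it is labeled a \emph{Fact} and simply cites \cite[Thm.\ 26.1 and 26.3]{rockafellar1970convex} without further argument. Your proposal therefore does strictly more than the paper, and its overall structure---separating interior and boundary behaviour for (i)\,\(\Leftrightarrow\)\,(ii), then passing through \((\lsubdiff\kernel*)^{-1}=\lsubdiff\kernel\) for (ii)\,\(\Leftrightarrow\)\,(iii) and honestly deferring the hard direction to Rockafellar---is sound and matches how those theorems are actually proved.

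One small imprecision worth flagging: in the boundary-blow-up equivalence you write that ``any \(\bar v\in\lsubdiff\kernel(\bar x)\) is approached by \(\nabla\kernel(x^k)\) along such a sequence''. What you need, and what Rockafellar actually shows, is merely that \emph{some} sequence \(\interior\X\ni x^k\to\bar x\) has \(\seq{\nabla\kernel(x^k)}\) bounded whenever \(\lsubdiff\kernel(\bar x)\neq\emptyset\); this is obtained by approaching \(\bar x\) along a segment from an interior point and using monotonicity of \(\lsubdiff\kernel\), not by a graph-closure argument. Since you ultimately invoke Rockafellar anyway this is harmless, but the phrasing as stated overclaims.
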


\begin{fact}[{\cite[Thm. 26.5]{rockafellar1970convex}} and {\cite[Thm. 3.7(v)]{bauschke1997legendre}}]\label{thm:D*}%
	Function \(\kernel\) is of Legendre type iff its conjugate \(\kernel*\) is.
	In this situation, \(\func{\nabla\kernel}{\interior\dom\kernel}{\interior\dom\kernel*}\) is a bijection with inverse \(\nabla\kernel*\), and the following identity holds:
	\[
		\D(x,y)
	=
		\D*(\nabla\kernel(y),\nabla\kernel(x))
	\quad
		\forall x,y\in\interior\dom\kernel.
	\]
\end{fact}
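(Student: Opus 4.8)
The plan is to establish the three assertions in order, using the preceding fact (characterizations of essential smoothness), the Fenchel duality-gap representation \eqref{eq:DFY}, and the identity \(\kernel^{**}=\kernel\) (valid because \(\kernel\) is proper, lsc and convex). \emph{First}, for the self-conjugacy of Legendreness: for a closed proper convex function, essential smoothness is equivalent to essential strict convexity of the conjugate---this is \cite{rockafellar1970convex}, of which the preceding fact is a specialization. Applying it once to \(\kernel\) and once to \(\kernel*\), and using \(\kernel^{**}=\kernel\), gives that \(\kernel\) is essentially smooth iff \(\kernel*\) is essentially strictly convex, and that \(\kernel*\) is essentially smooth iff \(\kernel\) is essentially strictly convex; since ``Legendre'' means ``essentially smooth \emph{and} essentially strictly convex'', the equivalence follows at once. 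Under this assumption \(\kernel*\) is itself a dgf---so \(\D*\) is well defined through \eqref{eq:D}---and, by the preceding fact applied to \(\kernel\) and to \(\kernel*\), one has \(\dom\lsubdiff\kernel=\interior\dom\kernel\), \(\dom\lsubdiff\kernel*=\interior\dom\kernel*\), with \(\lsubdiff\kernel(x)=\set{\nabla\kernel(x)}\) and \(\lsubdiff\kernel*(v)=\set{\nabla\kernel*(v)}\) on the respective interiors.

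\emph{Second}, I would show that \(\func{\nabla\kernel}{\interior\dom\kernel}{\interior\dom\kernel*}\) is a bijection with inverse \(\nabla\kernel*\), using the inverse-subdifferential identity \(v\in\lsubdiff\kernel(x)\Leftrightarrow x\in\lsubdiff\kernel*(v)\) (valid for any proper lsc convex \(\kernel\)). Fixing \(x\in\interior\dom\kernel\) and putting \(v\coloneqq\nabla\kernel(x)\in\lsubdiff\kernel(x)\), one gets \(x\in\lsubdiff\kernel*(v)\), hence \(v\in\dom\lsubdiff\kernel*=\interior\dom\kernel*\) and, by single-valuedness, \(\nabla\kernel*(v)=x\). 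Thus \(\nabla\kernel\) maps \(\interior\dom\kernel\) into \(\interior\dom\kernel*\) and \(\nabla\kernel*\circ\nabla\kernel=\operatorname{id}\) there; exchanging the roles of \(\kernel\) and \(\kernel*\) (legitimate by the first step) yields \(\nabla\kernel\circ\nabla\kernel*=\operatorname{id}\) on \(\interior\dom\kernel*\), whence the bijection.

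\emph{Third}, I would derive the distance identity. Fix \(x,y\in\interior\dom\kernel\); by the previous step \(\nabla\kernel(x),\nabla\kernel(y)\in\interior\dom\kernel*\) and \(\nabla\kernel*(\nabla\kernel(x))=x\). Expanding \(\D*\) via \eqref{eq:D},
\[
	\D*(\nabla\kernel(y),\nabla\kernel(x))
=
	\kernel*(\nabla\kernel(y))-\kernel*(\nabla\kernel(x))-\innprod{x}{\nabla\kernel(y)-\nabla\kernel(x)},
\]
and then substituting the Fenchel equality \(\kernel*(\nabla\kernel(x))=\innprod{x}{\nabla\kernel(x)}-\kernel(x)\) (which holds since \(\nabla\kernel(x)\in\lsubdiff\kernel(x)\)) and simplifying, the right-hand side reduces to \(\kernel(x)+\kernel*(\nabla\kernel(y))-\innprod{\nabla\kernel(y)}{x}\), i.e.\ to the duality-gap form \eqref{eq:DFY} of \(\D(x,y)\). (Equivalently, one may observe that \eqref{eq:DFY} is symmetric under \(\kernel\leftrightarrow\kernel*\) and apply it on both sides.)

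The only genuinely delicate point I anticipate is the domain bookkeeping in the second step: the crux is to conclude that \(\nabla\kernel(x)\) lands in the \emph{interior} of \(\dom\kernel*\), not merely in \(\dom\kernel*\), and this is exactly where essential smoothness of the conjugate---that is, the first step---is indispensable. Everything else is routine manipulation of the Fenchel identities.
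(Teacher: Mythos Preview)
Your proof is correct, but note that the paper does not actually prove this statement: it is presented as a \emph{Fact} with citations to \cite[Thm.~26.5]{rockafellar1970convex} and \cite[Thm.~3.7(v)]{bauschke1997legendre}, and no argument is given. Your three-step argument is precisely the standard route taken in those references (duality of essential smoothness and essential strict convexity, subdifferential inversion for the bijection, and the Fenchel equality for the distance identity), so there is nothing to compare.
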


		\subsection{Definitions}
			We begin by introducing well-known objects that comprise the basic toolbox of the paper.
It will be implied without mention that all the subsequent definitions are relative to the dgf \(\kernel\) and, when applicable, to a stepsize parameter \(\lambda>0\).

\begin{definition}[Bregman proximal map]\label{def:prox}%
	The \emph{left (Bregman) proximal mapping} of \(\func{f}{\X}{\Rinf}\) is \(\ffunc{\prox_{\lambda f}}{\Y}{\X}\) defined by
	\begin{subequations}
		\begin{align}
			\prox_{\lambda f}(\bar y)
		\coloneqq{} &
			\argmin_{x\in\X}\set{f(x)+\tfrac{1}{\lambda}\D(x,\bar y)}.
		\intertext{%
			The \emph{right (Bregman) proximal mapping} of \(\func{g}{\Y}{\Rinf}\) is \(\ffunc{\prox*_{\lambda g}}{\X}{\Y}\), where%
		}
			\prox*_{\lambda g}(\bar x)
		\coloneqq{} &
			\argmin_{y\in\Y}\set{g(y)+\tfrac{1}{\lambda}\D(\bar x,y)}.
		\end{align}
	\end{subequations}
\end{definition}

Following the discussion opened in the introduction, we remark that the definitions of Bregman proximal mapping also apply to functions \(\func{f,g}{\R^n}{\Rinf}\) defined on all the space \(\R^n\).
In fact, in this case the definitions are unaffected by removing the constraints over \(\X\) and \(\Y\) in the respective minimization problems, owing to the convention that \(\D(x,y)=\infty\) whenever \(x\notin\X\) or \(y\notin\Y\).
Our explicit emphasis on \(\X\) and \(\Y\) is chosen to highlight that \(\prox_{\lambda f}\) and \(\prox*_{\lambda g}\) solely depend on the value of \(f\) on \(\X\) and of \(g\) on \(\Y\), and once again will find an independent appeal when framing the discussion under the more general lens of \(\Phikernel\)-convexity.

One concrete advantage of our convention is that, as we will show, key properties of \(f\) and \(g\)---such as lower semicontinuity or properness---need only hold relatively to the sets \(\X\) and \(\Y\), rather than on the entire space \(\R^n\) as is typically assumed in the literature.
As \cref{thm:ext,ex:ln} demonstrate, this local perspective allows us to encompass a broader class of functions.

\begin{remark}\label{rem:equiv}%
	In order to keep consistency with the literature, one may regard \(f\) rather as the \emph{equivalence class} \([f]_\sim\) of functions \(X\to\Rinf\) with \(\X\subseteq X\subseteq\R^n\) that agree with \(f\) on \(\X\).
	This choice entitles us to unambiguously adopt the same notation, say, \(\prox_{\lambda f}\), both for \(\func{f}{\R^n}{\Rinf}\) and \(\func{f}{\X}{\Rinf}\).
	The same comments apply to \emph{right} functions \(\func{g}{\Y}{\Rinf}\) as well.
	Regardless of the chosen perspective, in all our results out of clarity we will always explicitly declare the domains of definitions of functions and operators.
\end{remark}

\begin{definition}[Bregman-Moreau envelopes]\label{def:env}%
	The \emph{left (Bregman-Moreau) envelope} of \(\func{f}{\X}{\Rinf}\) is \(\func{\env{f}}{\Y}{\Rinf}\) defined by
	\begin{subequations}
		\begin{align}
			\env{f}(\bar y)
		\coloneqq{} &
			\inf_{x\in\X}\set{f(x)+\tfrac{1}{\lambda}\D(x,\bar y)}.
		\intertext{%
			The \emph{right (Bregman--Moreau) envelope} of \(\func{g}{\Y}{\Rinf}\) is \(\func{\env*{g}}{\X}{\Rinf}\), where%
		}
			\env*{g}(\bar x)
		\coloneqq{} &
			\inf_{y\in\Y}\set{g(y)+\tfrac{1}{\lambda}\D(\bar x,y)}.
		\end{align}
	\end{subequations}
\end{definition}

Consistent with existing literature, the definitions of \(\env{f}\) and \(\env*{g}\) can be extended to the entire space \(\R^n\) by setting them to \(\infty\) outside of \(\Y\) and \(\X\), respectively.
This can indeed be done without loss of generality and notational consistency, up to resorting to the ``equivalence class'' interpretation mentioned in \cref{rem:equiv}.
We nevertheless argue that these envelopes are intrinsically mappings defined on \(\Y\) and \(\X\):
regardless of any extension, their properties are meaningful only relative to these sets.
For example, under assumptions on \(\kernel\) and on \(f\) (on \(\X\)), when viewed as a function \(\Y\rightarrow\Rinf\) and as an operator \(\Y\rightrightarrows\X\), \(\env{f}\) and \(\prox_{\lambda f}\) can be shown to be lower semicontinuous and outer semicontinuous, respectively; see \cref{thm:mainprop}.
If we instead consider them as defined on the entire space \(\R^n\), any such claims must be qualified by stating that the properties hold only relative to \(\Y\).

In this respect, we point out the imprecise statements of \cite[Thm. 2.2(iii) and 2.6]{kan2012moreau} or \cite[Fact 2.2]{wang2022bregman} in which such restrictions are not explicitly mentioned, or the typo in \cite[Def. 2.3]{kan2012moreau} and \cite[Def. 2.1]{wang2022bregman} from which \cref{def:PB} in the next subsection is taken, for it allows, in our notation, any \(y\in\R^n\) as opposed to \(y\in\Y\);
see also the commentary after \cref{thm:mainprop}.

\begin{definition}[Bregman proximal hull]
	The \emph{left Bregman proximal hull} of an extended-real-valued function \(\func{f}{\X}{\Rinf}\) is the function \(\func{\hull_{\lambda}{f}}{\X}{\Rinf}\) given by \(\hull_{\lambda}{f}\coloneqq-\env*{(-\env{f})}\).
	Similarly, the \emph{right Bregman proximal hull} of \(\func{g}{\Y}{\Rinf}\) is \(\func{\hull*_{\lambda}{g}}{\Y}{\Rinf}\) given by \(\hull*_{\lambda}{g}\coloneqq-\env{(-\env*{g})}\).
\end{definition}

When \(\kernel=\j\), for any \(\func{f}{\R^n}{\Rinf}\) these objects reduce to the \emph{proximal hull} \(\Ehull_{\lambda}{f}=-\Eenv_{\lambda}{(-\Eenv_{\lambda}{f})}\), which is the largest function \(h\leq f\) such that \(h+\lambda^{-1}\j\) is convex \cite[Ex. 1.44]{rockafellar1998variational}.

Left and right Bregman Moreau envelopes can be elegantly interpreted as (negative) \emph{left and right conjugates} in the broad theory of \emph{\(\Phikernel\)-convexity}.
We will quickly outline this parallel in the dedicated \cref{sec:Phiconj}, and refer the reader to \cite{laude2023dualities} for a detailed account and the many interesting implications that this perspective offers.
By interpreting the Bregman proximal hulls with the corresponding \emph{biconjugate} operators---as we shall in \cref{sec:Phiconj}, see \eqref{eq:Phi:trienv}---the relations
\begin{equation}\label{eq:triconj}
	\env{f}=\env{(\hull_{\lambda}{f})}
\quad\text{and}\quad
	\env*{g}=\env*{(\hull*_{\lambda}{g})},
\end{equation}
holding for any dgf \(\kernel\), \(\func{f}{\X}{\Rinf}\), and \(\func{g}{\Y}{\Rinf}\), bear the same familiarity as the standard identity \(h^*=(h^{**})^*\) in convex analysis for any function \(\func{h}{\R^n}{\Rinf}\).
A detailed proof of \eqref{eq:triconj} can be found in \cite[Prop. 2.14]{wang2022bregman}, where these generalized conjugacy relations naturally emerge, though not explicitly labeled as such.
While the reference assumes 1-coercivity and Legendreness of \(\kernel\) throughout, the cited proof works with no restriction on either \(\kernel\) or \(f\), as we will confirm in \cref{sec:Phiconj}.

		\subsection{(Left) prox-boundedness and consequences}
			We mention a well-known property which, coupled with 1-coercivity of \(\kernel\), constitutes a fundamental prerequisite for the left proximal mapping to be a meaningful object in practice.

\begin{definition}[\(\kernel\)-prox-boundedness {\cite[Def. 2.3]{kan2012moreau}}]\label{def:PB}%
	We say that \(\func{f}{\X}{\Rinf}\) is \emph{\(\kernel\)-prox-bounded} if there exist \(\lambda>0\) and \(y\in\Y\) such that \(\env{f}(y)>-\infty\).
	The supremum \(\pb\) of all such \(\lambda\) is the \emph{\(\kernel\)-prox-boundedness threshold} of \(f\).
\end{definition}

The above property is useful in combination with 1-coercivity of \(\kernel\), as it ensures, among other things, the nonemptiness of \(\prox_{\lambda f}(\bar x)\) for any sufficiently small \(\lambda\) and \(\bar x \in \X\).
Since these implications are specific to the \emph{left} proximal operator and the \emph{left} Bregman-Moreau envelope, it would be more precise to speak in terms of \emph{left} \(\kernel\)-prox-boundedness.
However, we retain the original terminology, as it is the one most commonly used in the literature.
Expanding upon and generalizing \cite{laude2020bregman,laude2021lower}, we will elaborate in \cref{sec:pb*} on the \emph{right} counterpart, after some necessary intermediate results are proven.

The main consequences of \(\kernel\)-prox-boundedness are listed in \cref{thm:mainprop}, and the proof referred to results in \cite{kan2012moreau}.
Before that, the following preliminary result reassures us that the referenced proofs can correctly be invoked in our more general setting, even for those functions \(\func{f}{\X}{\Rinf}\) which are proper and lsc on \(\X\) but may not possess an extension preserving both properties on the entire space \(\R^n\), such as the one in \cref{ex:ln}.
As we shall see, while the canonical extension \(\tilde f\) of \(f\) as in \cref{def:fext} may indeed fail to be lsc, \emph{the sum \(\lambda\tilde f+\kernel\) is always lsc as long as \(\lambda<\pb\)}.
This generalizes \cite[Prop. 2.4]{wang2022bregman}, which showed that this sum is 1-coercive whenever \(\kernel\) itself is 1-coercive.
Our revisited proof confirms that this remains true even without standing assumptions of Legendreness.

\begin{lemma}\label{thm:ext}%
	Fix \(\func{f}{\X}{\Rinf}\) and extend it to \(\func{\tilde f}{\R^n}{\Rinf}\) by setting it to \(\infty\) on \(\R^n\setminus\X\) as in \cref{def:fext}.
	Then, for any \(y\in\Y\) and \(\lambda>0\) one has
	\[
		\prox_{\lambda f}(y)=\prox_{\lambda\tilde f}(y)
	\quad\text{and}\quad
		\env{f}(y)=\env{\tilde f}(y).
	\]
	When \(f\) is \(\kernel\)-prox-bounded, then the following also hold for any \(\lambda\in(0,\pb)\):
	\begin{enumerate}
	\item \label{thm:ext:lsc}%
		\textup{\cite[Prop. 2.4(i)]{wang2022bregman}}
		If \(\kernel\) is 1-coercive, then \(\lambda\tilde f+\kernel\) too is 1-coercive.
	\item \label{thm:ext:1co}%
		If \(f\) is lsc (on \(\X\)), then \(\lambda\tilde f+\kernel\) too is lsc (on \(\R^n\)).
	\end{enumerate}
\end{lemma}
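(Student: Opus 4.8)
The plan is to dispose of the two identities at once and then extract from $\kernel$-prox-boundedness a single affine-plus-$\kernel$ minorant of $\lambda\tilde f+\kernel$ that drives both of the remaining assertions. For the identities, fix $y\in\Y$: the maps $x\mapsto\tilde f(x)+\tfrac1\lambda\D(x,y)$ and $x\mapsto f(x)+\tfrac1\lambda\D(x,y)$ agree on $\X$, while on $\R^n\setminus\X$ the former is identically $\infty$ (so is $\D(\cdot,y)$, since $\kernel\equiv\infty$ off $\dom\kernel$); hence the infima over $\R^n$ and over $\X$ coincide, and so do the argmin sets, which gives $\env{f}(y)=\env{\tilde f}(y)$ and $\prox_{\lambda f}(y)=\prox_{\lambda\tilde f}(y)$ with no prox-boundedness needed.

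The core of the proof is the claim that \emph{for every $\lambda\in(0,\pb)$ there are $\theta\in(0,1)$ and an affine function $\ell$ on $\R^n$ with $\lambda\tilde f+\kernel\geq\theta\kernel+\ell$ on all of $\R^n$}. To establish it, fix $\mu\in(\lambda,\pb)$; since $\mu$ lies below the supremum defining $\pb$, there exist $\nu>\mu$ and $y_0\in\Y$ with $\env_{\nu}{f}(y_0)>-\infty$, i.e.\ $f+\tfrac1\nu\D(\cdot,y_0)\geq c$ on $\X$ for some $c\in\R$, and as $\D\geq0$ this inequality persists with $\nu$ replaced by $\mu$. Expanding $\D(\cdot,y_0)$ via \eqref{eq:D} and regrouping the $\kernel(x)$ contribution turns it into $\lambda f(x)+\kernel(x)\geq(1-\tfrac\lambda\mu)\kernel(x)+\innprod{a}{x}+b$ for $x\in\X$, with $a=\tfrac\lambda\mu\nabla\kernel(y_0)$ and a suitable $b\in\R$; since $1-\tfrac\lambda\mu>0$, both sides are $\infty$ on $\R^n\setminus\X=\R^n\setminus\dom\kernel$, so the bound extends to $\R^n$ with $\theta\coloneqq1-\tfrac\lambda\mu$ and $\ell\coloneqq\innprod{a}{{}\cdot{}}+b$. (Note in passing that $\kernel$-prox-boundedness forces $f>-\infty$ on $\X$, as a $-\infty$ value of $f$ would make $\env{f}\equiv-\infty$.)

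Granting the claim, assertion \ref{thm:ext:lsc} is immediate: if $\kernel$ is 1-coercive then so is $\theta\kernel+\ell$ (a positive multiple of a 1-coercive function plus an affine term), and any function minorized by a 1-coercive function is itself 1-coercive. For \ref{thm:ext:1co} I would verify lower semicontinuity pointwise. At $\bar x\in\X$, $\lambda\tilde f+\kernel$ agrees with $\lambda f+\kernel$ on $\X$, so lsc at $\bar x$ follows from lsc of $f$ on $\X$ and of $\kernel$ on $\R^n$: split an approaching sequence into its part inside $\X$ and its part outside $\X$, where $\lambda\tilde f+\kernel\equiv\infty$; no $\infty-\infty$ arises because $\kernel(\bar x)\in\R$ and $f(\bar x)>-\infty$. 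At $\bar x\in\R^n\setminus\X$ one has $(\lambda\tilde f+\kernel)(\bar x)=\infty$ and, since $\theta\kernel+\ell$ is lsc on $\R^n$ with $(\theta\kernel+\ell)(\bar x)=\infty$, $\liminf_{x\to\bar x}(\lambda\tilde f+\kernel)(x)\geq\liminf_{x\to\bar x}(\theta\kernel+\ell)(x)\geq(\theta\kernel+\ell)(\bar x)=\infty$, so lsc holds there too.

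The only real obstacle is \ref{thm:ext:1co}: one cannot simply invoke ``a sum of lsc functions is lsc'', because $\tilde f$ itself need not be lsc on $\R^n$ — this is precisely the phenomenon illustrated by \cref{ex:ln}. Prox-boundedness with $\lambda<\pb$ is what repairs it, by furnishing the minorant $\theta\kernel+\ell$ with strictly positive $\kernel$-coefficient; since $\kernel$ is lsc and equals $\infty$ exactly off $\X$, this minorant alone forces $\lambda\tilde f+\kernel$ to blow up at every point of $\closure\X\setminus\X$, which is exactly the set where $\tilde f$ is free to misbehave.
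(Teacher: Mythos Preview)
Your proof is correct and follows essentially the same approach as the paper: both extract from \(\kernel\)-prox-boundedness a minorant of the form \(\theta\kernel+\ell\) with \(\theta>0\), and use it to drive both 1-coercivity and lower semicontinuity of \(\lambda\tilde f+\kernel\). The only differences are cosmetic---the paper works with \(\psi_\lambda=\tilde f+\tfrac{1}{\lambda}\D(\cdot,y_0)\) rather than \(\lambda\tilde f+\kernel\) directly, and phrases the lsc argument via sublevel sets and contradiction rather than your pointwise liminf split into \(\bar x\in\X\) versus \(\bar x\notin\X\)---but the underlying mechanism is identical.
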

\begin{proof}
	The identities hold trivially since \(\D(x,y)=\infty\) whenever \(x\notin\X\).
	We next assume that \(f\) is \(\kernel\)-prox-bounded and prove the numbered claims.
	\begin{itemize}
	\item ``\ref{thm:ext:lsc}''
		Let \(\lambda_0\in(\lambda,\pb)\) be fixed, and let \(y_0\in\Y\) be such that \(\env_{\lambda_0}{f}(y^0)>-\infty\).
		Note that the existence of such \(y^0\) ensures that \(f>-\infty\) must hold; moreover, if \(f\equiv\infty\) then all the claims are trivial.
		In what follows, we shall thus suppose that \(f\) is proper.
		To streamline the notation, let us denote \(\psi_{\lambda}\coloneqq\tilde f+\tfrac{1}{\lambda}\D({}\cdot{},y^0)\).
		Since \(\psi_{\lambda}\) and \(\tilde f+\lambda^{-1}\kernel\) differ by an affine term, it suffices to show the claims for \(\psi_{\lambda}\).
		For any \(x\in\R^n\) it holds that
		\begin{align*}
			\psi_{\lambda}(x)
		={} &
			\tilde f(x)
			+
			\tfrac{1}{\lambda_0}\D(x,y^0)
			+
			\left(\tfrac{1}{\lambda}-\tfrac{1}{\lambda_0}\right)\D(x,y^0)
		\\
		\geq{} &
			\env_{\lambda_0}{f}(y^0)
			+
			\left(\tfrac{1}{\lambda}-\tfrac{1}{\lambda_0}\right)\D(x,y^0)
		\\
		={} &
			\left(\tfrac{1}{\lambda}-\tfrac{1}{\lambda_0}\right)\kernel(x)
			+
			\innprod{v}{x}+c,
		\numberthis\label{eq:psigeq}
		\end{align*}
		where
		\(
			c
		\coloneqq
			\env_{\lambda_0}{f}(y^0)
			+
			\bigl(\tfrac{1}{\lambda}-\tfrac{1}{\lambda_0}\bigr)
			\kernel*(\nabla\kernel(y^0))
		\)
		and
		\(
			v
		\coloneqq
			\bigl(\tfrac{1}{\lambda}-\tfrac{1}{\lambda_0}\bigr)
			\nabla\kernel(y^0)
		\).
		From this expression it is clear that whenever \(\kernel\) is 1-coercive, then so is \(\psi_{\lambda}\).

	\item ``\ref{thm:ext:1co}''
		Let \(\alpha\in\R\) and \(\seq{x^k}\) be a sequence converging to a point \(\bar x\) and such that \(\psi_{\lambda}(x^k)\leq\alpha\) for all \(k\).
		To arrive to a contradiction, suppose that \(\psi_{\lambda}(\bar x)>\alpha\).
		Since \(\psi_{\lambda}\) is lsc on \(\X\) by assumption, one must have that \(\bar x\notin\X\), that is, \(\kernel(\bar x)=\infty\).
		On the other hand, for every \(k\) it follows from \eqref{eq:psigeq} that
		\(
			(\tfrac{1}{\lambda}-\tfrac{1}{\lambda_0})\kernel(x^k)
			+
			\innprod{v}{x^k}
			+
			c
		\leq
			\alpha
		\).
		The fact that
		\(
			(\tfrac{1}{\lambda}-\tfrac{1}{\lambda_0})\kernel(\bar x)
			+
			\innprod{v}{\bar x}
			+
			c
		=
			\infty
		>
			\alpha
		\)
		contradicts the fact that \(\kernel\) is lsc (on \(\R^n\)).
	\qedhere
	\end{itemize}\let\qed\relax
\end{proof}

Having ruled out possible lower semicontinuity issues, we are now ready to list the anticipated consequences of \(\kernel\)-prox-boundedness and refer the proofs to existing results.
In the rest of the subsection we will provide some comments and examples to demonstrate the importance of the assumptions in place.

\begin{fact}[{\cite[Thm. 2.2 and 2.6, Cor. 2.2]{kan2012moreau}}]\label{thm:mainprop}%
	Suppose that \(\kernel\) is 1-coercive, and that \(\func{f}{\X}{\Rinf}\) is proper, lsc, and \(\kernel\)-prox-bounded.
	Then, for any \(\lambda\in(0,\pb)\) the following hold:
	\begin{enumerate}
	\item
		\(\dom\env{f}=\dom\prox_{\lambda f}=\Y\).
	\item
		\(\func{\env{f}}{\Y}{\R}\) is continuous.
	\item \label{thm:mainprop:osc}%
		\(\ffunc{\prox_{\lambda f}}{\Y}{\X}\) is compact-valued and usc; in particular, it is locally bounded, osc, and \(\graph\prox_{\lambda f}\) is closed in \(\Y\times\R^n\).
	\end{enumerate}
\end{fact}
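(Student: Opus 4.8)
The plan is to reduce \cref{thm:mainprop} to the classical statements of \cite{kan2012moreau}---which are phrased for functions on all of \(\R^n\)---by passing through the canonical extension \(\func{\tilde f}{\R^n}{\Rinf}\) of \cref{def:fext}. By the first part of \cref{thm:ext}, one has \(\prox_{\lambda f}(\bar y)=\prox_{\lambda\tilde f}(\bar y)\) and \(\env{f}(\bar y)=\env{\tilde f}(\bar y)\) for every \(\lambda>0\) and \(\bar y\in\Y\); consequently \(\tilde f\) is \(\kernel\)-prox-bounded with the same threshold \(\pb\), and \(\dom\env{f}=\dom\env{\tilde f}\), \(\dom\prox_{\lambda f}=\dom\prox_{\lambda\tilde f}\). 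It therefore suffices to prove the three assertions with \(f\) replaced by \(\tilde f\).

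The only delicate point---and the reason \cref{thm:ext} is needed in the first place---is that \(\tilde f\) itself need not be lsc on \(\R^n\) (see \cref{ex:ln}), so the cited results cannot be quoted completely verbatim. However, for \(\bar y\in\Y\) and \(\lambda\in(0,\pb)\) the objective \(\tilde f+\tfrac1\lambda\D({}\cdot{},\bar y)\) agrees, up to an affine term by \eqref{eq:DFY}, with the positive multiple \(\tfrac1\lambda(\lambda\tilde f+\kernel)\), which is lsc on \(\R^n\) by \cref{thm:ext} (recall that \(f\) is lsc on \(\X\)) and, since \(\kernel\) is 1-coercive, is also 1-coercive; moreover it is proper, because \(\emptyset\neq\dom f\subseteq\X=\dom\D({}\cdot{},\bar y)\). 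As a proper, lsc, level-bounded function it attains its finite infimum on a nonempty compact set, so \(\env{\tilde f}(\bar y)\in\R\) for every \(\bar y\in\Y\)---and \(\env{\tilde f}(\bar y)=\infty\) for \(\bar y\notin\Y\), where \(\D({}\cdot{},\bar y)\equiv\infty\)---while \(\prox_{\lambda\tilde f}(\bar y)\) is nonempty and, being the set of minimizers of an lsc level-bounded function on \(\R^n\), compact. This yields assertion (i) and the compact-valuedness claim in \cref{thm:mainprop:osc}, and it is exactly the ingredient the proofs in \cite{kan2012moreau} rely on, which therefore carry over unchanged.

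For (ii), I would run the parametric-minimization argument of \cite[Thm.~2.6]{kan2012moreau}. Because \(\kernel\) is continuously differentiable on the open set \(\Y\), the function \((x,\bar y)\mapsto\tilde f(x)+\tfrac1\lambda\D(x,\bar y)\) is jointly lsc on \(\R^n\times\Y\); together with the level boundedness of \(\tilde f+\tfrac1\lambda\D({}\cdot{},\bar y)\), which is uniform for \(\bar y\) near any \(\bar y_0\in\Y\), this gives lower semicontinuity of \(\env{\tilde f}\) on \(\Y\). Upper semicontinuity at \(\bar y_0\) follows by bounding \(\env{\tilde f}(\bar y)\le\tilde f(x_0)+\tfrac1\lambda\D(x_0,\bar y)\) for a fixed \(x_0\in\prox_{\lambda\tilde f}(\bar y_0)\) and letting \(\bar y\to\bar y_0\), using continuity of \(\D(x_0,{}\cdot{})\) on \(\Y\). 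Hence \(\func{\env{\tilde f}}{\Y}{\R}\) is continuous.

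Finally, the same joint lower semicontinuity combined with the continuity just obtained shows that \(\graph\prox_{\lambda\tilde f}\) is closed in \(\Y\times\R^n\): if \((\bar y^k,x^k)\to(\bar y,x)\) with \(\bar y^k,\bar y\in\Y\) and \(x^k\in\prox_{\lambda\tilde f}(\bar y^k)\), then \(\tilde f(x)+\tfrac1\lambda\D(x,\bar y)\le\liminf_k\bigl[\tilde f(x^k)+\tfrac1\lambda\D(x^k,\bar y^k)\bigr]=\lim_k\env{\tilde f}(\bar y^k)=\env{\tilde f}(\bar y)\), forcing \(x\in\prox_{\lambda\tilde f}(\bar y)\); and local boundedness again comes from the uniform level boundedness. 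By \cref{thm:usccpt}, a locally bounded operator whose graph is closed in \(\Y\times\R^n\) is usc and compact-valued, which settles \cref{thm:mainprop:osc}. The main obstacle in the whole argument is precisely the possible failure of lower semicontinuity of \(\tilde f\); once \cref{thm:ext} disposes of it, the rest is a faithful transcription of the arguments in \cite{kan2012moreau}.
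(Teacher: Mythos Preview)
Your proposal is correct and follows essentially the same route as the paper: pass to the canonical extension \(\tilde f\), invoke \cref{thm:ext} to make \(\lambda\tilde f+\kernel\) proper, lsc and 1-coercive on \(\R^n\), and then run the parametric-minimization arguments that the paper simply defers to the proofs in \cite{kan2012moreau}. The only cosmetic difference is that for assertion~(iii) you first establish closedness of the graph in \(\Y\times\R^n\) and local boundedness and then deduce usc via \cref{thm:usccpt}, whereas the paper quotes usc directly from \cite[Thm.~2.6]{kan2012moreau} and derives the remaining properties from it.
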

\begin{proof}
	For any \(y\in\Y\) and \(\lambda\in(0,\pb)\), it follows from \cref{thm:ext} that
	\(
		\psi_{y,\lambda}
	\coloneqq
		\tilde f + \lambda^{-1}\D({}\cdot{},y)
	\)
	is proper, lsc, and 1-coercive on \(\R^n\).
	The proof of \cite[Thm. 2.2]{kan2012moreau} can thus be invoked to obtain that \(\dom\prox_{\lambda f}=\dom\env{f}=\Y\), and that \(\prox_{\lambda f}\) is (nonempty- and) compact-valued.
	In turn, \cite[Cor. 2.2]{kan2012moreau} yields the sought continuity of \(\env{f}\).
	Finally, upper semicontinuity of \(\prox_{\lambda f}\) (on \(\Y\)) is shown in the proof of \cite[Thm. 2.6]{kan2012moreau}.
	\Cref{thm:usccpt} then yields local boundedness and closedness of \(\graph\prox_{\lambda f}\) in \(\Y\times\R^n\) (and in particular that \(\prox_{\lambda f}\) is osc).
\end{proof}

In the remainder of the subsection we make some important remarks regarding the statement and the ones in the cited reference.

\subsubsection{Rectified claims}
	By rigorously framing \(\env{f}\) and \(\prox_{\lambda f}\) as maps defined on \(\Y\), as opposed to \(\R^n\), our statement of \cref{thm:mainprop} rectifies some imprecise claims of the original reference \cite{kan2012moreau}.
	Indeed, note that \(\prox_{\lambda f}\) is usc only relative to \(\Y\), but not necessarily so if extended as a map on the entire space \(\R^n\); similarly, \(\graph\prox_{\lambda f}\) is closed only relative to \(\Y\times\R^n\), with no guarantee of closedness in \(\R^n\times\R^n\) as incorrectly claimed in \cite[Thm. 2.6 and Cor. 2.3]{kan2012moreau}.\footnote{%
		While the domain of definition of \(\prox_{\lambda f}\) is not explicitly mentioned, the definition of upper semicontinuity and \emph{closedness} (of the graph) is given for mappings between metric spaces (as opposed to \emph{subsets} thereof), see \cite[Def. 2.6 and 2.7]{kan2012moreau}.
	}
	A similar commentary extends to lower semicontinuity of \(\env{f}\) \cite[Thm. 2.2(iii)]{kan2012moreau}, \cite[Fact 2.2]{wang2022bregman}.
	In general, unless \(\kernel\) has full domain these properties are not guaranteed if \(\prox_{\lambda f}\) and \(\env{f}\) are viewed as objects defined on \(\R^n\), and this comment holds regardless of whether or not \(f\) admits lsc extensions on the whole space.

	\begin{example}\label{ex:0}%
		Let \(\kernel\) be \emph{any} dgf with \(\dom\kernel\subset\R^n\) and let \(f\equiv 0\).
		Viewed as an operator \(\R^n\rightrightarrows\R^n\), one has that \(\prox_{\lambda f}(y)\supseteq\set{y}\) if \(y\in\Y\) and is empty otherwise (the inclusion is an equality if \(\kernel\) is strictly convex).
		Since \(\Y=\interior\dom\kernel\) is not closed (in \(\R^n\)), at any boundary point \(\bar y\in\closure\Y\setminus\Y\) this operator is neither usc (by \cref{thm:domTclosed}) nor osc (consider \((x^k,y^k)=(y^k,y^k)\) in \cref{def:osc} with \(\Y\ni y^k\to\bar y\)).
		Similarly, \(\env{f}\) reduces to the indicator function of set \(\Y\), which is lsc on \(\Y\) but not on \(\R^n\).

		On the contrary, \(\prox_{\lambda f}\) is usc and osc and \(\env{f}\) is continuous \emph{relative to \(\Y\)}.
		No such specification is needed if \(\prox_{\lambda f}\) and \(\env{f}\) are considered as maps \emph{defined on \(\Y\)}, as we argue the natural practice should be.
	\end{example}

	These discrepancies are known, and the imprecise statements in this regard can be reasonably interpreted as minor typos or oversights.\footnote{%
		In \cite[Cor. 2.2]{kan2012moreau}, for instance, it is this time explicitly mentioned that \(\env{f}\) is continuous \emph{relative to \(\Y\)}, differently from the imprecise statement of \cite[Thm. 2.2(iii)]{kan2012moreau} were such restriction is not accounted for in the claims of lower semicontinuity.
	}
	We argue that the culprit lies in the conventional practice of treating \(\env{f}\) and \(\prox_{\lambda f}\) as maps defined on all of \(\R^n\).
	While widespread, this perspective introduces unnecessary complications and requires additional caveats to ensure correctness in otherwise straightforward statements.
	In contrast, when these objects are naturally defined on the open set \(\Y\)---the interior of the domain of the dgf \(\kernel\)---such ambiguities are automatically resolved.
	Notice further that our statement need not make any mention of the condition \(\dom f\cap\dom\kernel\neq\emptyset\), usually imposed in the literature to ensure that \(\env{f}\not\equiv\infty\).
	Instead, by regarding \(f\) as a function \emph{defined on \(\X\)}, mere properness of \(f\) automatically ensures this condition.

\subsubsection{A more general statement}
	The conventional approach of restricting attention to proper, lower semicontinuous functions \(\func{f}{\R^n}{\Rinf}\) may exclude an important and practically significant class of functions---those that are \emph{relatively weakly convex}, meaning they become convex when a suitable multiple of \(\kernel\) is added.
	A representative example is given in \cref{ex:ln}.
	Functions in this broad class possess highly desirable analytical properties: under mild conditions on \(\kernel\), one can show that \(\env{f}\circ\nabla\kernel* \) is continuously differentiable, and the proximal mapping \(\prox_{\lambda f}\) is continuous for sufficiently small \(\lambda\).

	\begin{example}\label{ex:ln}%
		Let \(\func{\kernel}{\R}{\Rinf}\) be defined as \(\kernel(x)=-\ln(x)\) for \(x\in\X=(0,\infty)\) and \(\kernel(x)=\infty\) for \(x\leq 0\).
		Then, \(\func{f}{\X}{\Rinf}\) given by \(f(x)=\ln(x)\) is proper, lsc, and \(\kernel\)-prox-bounded with \(\pb=1\).
		Although \(f\) does not admit a proper and lsc extension on \(\R\), \(\prox_{\lambda f}\) and \(\env{f}\) still enjoy the properties stated in \cref{thm:mainprop} for any \(\lambda\in(0,\pb)\).
	\end{example}

	This shortcoming is fully addressed in our convention, which imposes requirements on functions only on their ``natural'' set of definition \(\X\).

\subsubsection{The role of 1-coercivity}
	It is also important to remark the key role that 1-coercivity of \(\kernel\) has for \(\kernel\)-prox-boundedness to be meaningful, even when \(\kernel\) is Legendre with full domain.

	\begin{example}[importance of 1-coercivity in \(\kernel\)-prox-boundedness]%
		Consider \(\kernel=\exp\) with \(\X=\Y=\R\), which has \(\kernel*(\xi)=\xi\ln\xi-\xi\) for \(\xi\geq0\) (with the convention \(0\cdot\ln0=0\)).
		Let \(f(x)=x\).
		It is easy to see that \(f\) is \(\kernel\)-prox-bounded with \(\pb=\infty\), having
		\begin{align*}
			\env{f}(y)
		={} &
			\begin{cases}
				\lambda^{-1}\bigl[
					\kernel*(e^y)
					-
					\kernel*\bigl(e^y-\lambda\bigr)
				\bigr]
			&
				\text{if }
				e^y\geq \lambda
			\\
				-\infty
			&
				\text{otherwise,}
			\end{cases}
		\shortintertext{%
			and
		}
			\prox_{\lambda f}(y)
		={} &
			\begin{cases}
				\set{\ln(e^y-\lambda)}
			&
				\text{if }
				e^y>\lambda
			\\
				\emptyset
			&
				\text{otherwise.}
			\end{cases}
		\end{align*}
		Apparently, however, there exists no \(\lambda>0\) for which \(\env{f}\) is either proper or lsc (at \(y=\ln\lambda\)), nor for the proximal mapping to be locally bounded (at \(y=\ln\lambda)\) or with full domain.
	\end{example}

		\subsection{Alternative expressions}\label{sec:alternative}%
			This subsection lists some useful alternative expressions for the envelopes and proximal hull that involve convex conjugates and Euclidean forms.
Those involving convex conjugates extend those in \cite[Thm. 2.4]{kan2012moreau} and \cite[Prop. 2.14(ii)]{wang2022bregman} to account for the restricted definition \(\func{\env{f}}{\Y}{\Rinf}\) (as opposed to \(\func{\env{f}}{\R^n}{\Rinf}\)) that we adopt throughout, as well as waiving requirements on \(f\) or blanket assumptions of 1-coercivity on \(\kernel\); see \cite[Rem. 1]{kan2012moreau} and \cite[p. 1380]{wang2022bregman}.
The ones involving Euclidean expressions are derived as special cases of more general formulae, which, to the best of our knowledge, are novel.

\subsubsection{Expressions involving convex conjugates}
	\begin{lemma}\label{thm:left*}%
		Given \(\func{f}{\X}{\Rinf}\), extend it to \(\func{\tilde f}{\R^n}{\Rinf}\) by setting it to \(\infty\) on \(\R^n\setminus\X\) as in \cref{def:fext}.
		Then, for any \(\lambda>0\) the following hold:
		\begin{enumerate}
		\item \label{thm:envconj}%
			\(
				\lambda\env{f}
			=
				\bigl[\kernel*-(\lambda\tilde f+\kernel)^*\bigr]\circ\nabla\kernel
			\).
			In particular, when \(\kernel\) is of Legendre type,
			\(
				\lambda\env{f}\circ\nabla\kernel*
			=
				\bigl[\kernel*-(\lambda\tilde f+\kernel)^*\bigr]\restr_{\Y*}
			\)
			is locally Lipschitz continuous.

		\item \label{thm:proxconj}%
			\(
				\prox_{\lambda f}(y)
			=
				\bigl(\fsubdiff(\lambda\tilde f+\kernel)\bigr)^{-1}\circ\nabla\kernel
			\).

		\item \label{thm:hull}%
			\(
				\begin{array}[t]{@{}r@{}l@{}}
					\lambda\hull_{\lambda}{f}
				=
					\bigl((\lambda\tilde f+\kernel)^*+\indicator_{\range\nabla\kernel}\bigr)^*
					\restr_{\X}
					-
					\kernel
				\leq{} &
					\conv*(\lambda f+\kernel)
					-
					\kernel
				\\
				\leq{} &
					\conv(\lambda f+\kernel)
					-
					\kernel
				\leq
					\lambda f.
				\end{array}
			\)

			If \(\kernel\) is essentially smooth and 1-coercive, then the first inequality holds as equality.
			If instead \(\kernel\) is 1-coercive and \(f\) is \(\kernel\)-prox-bounded with \(\lambda<\pb\), then the second inequality holds as equality.
		\end{enumerate}
	\end{lemma}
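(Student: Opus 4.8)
The plan is to derive everything from the Fenchel-duality representation of the Bregman distance in \eqref{eq:DFY} together with the ``canonical extension'' machinery of \cref{thm:fext} and \cref{thm:ext}. The key observation is that for $y\in\Y$,
\[
	\lambda\env{f}(\bar y)
=
	\inf_{x\in\R^n}\set{\lambda\tilde f(x)+\kernel(x)}
	+\kernel*(\nabla\kernel(\bar y))-\innprod{\nabla\kernel(\bar y)}{x}
	\;\text{(inside the inf over }x\text{)},
\]
so that, collecting the $x$-independent terms,
\[
	\lambda\env{f}(\bar y)
=
	\kernel*(\nabla\kernel(\bar y))-\sup_{x\in\R^n}\set{\innprod{\nabla\kernel(\bar y)}{x}-(\lambda\tilde f+\kernel)(x)}
=
	\bigl[\kernel*-(\lambda\tilde f+\kernel)^*\bigr](\nabla\kernel(\bar y)).
\]
This is precisely \ref{thm:envconj}; no properness or coercivity of $f$ is needed for the identity, since it is just a rearrangement of the definition (and $\D(x,\bar y)=\infty$ for $x\notin\X$ makes the passage to $\tilde f$ on all of $\R^n$ harmless, exactly as recorded at the start of \cref{thm:ext}). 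For the ``in particular'' clause, when $\kernel$ is Legendre, \cref{thm:D*} gives that $\nabla\kernel\colon\Y\to\Y*$ is a bijection with inverse $\nabla\kernel*$, hence $\lambda\env{f}\circ\nabla\kernel*=[\kernel*-(\lambda\tilde f+\kernel)^*]\restr_{\Y*}$; the right-hand side is a difference of a finite convex function ($\kernel*$ is finite and convex on the open set $\Y*=\interior\dom\kernel*$) and a convex function $(\lambda\tilde f+\kernel)^*$ which is finite on $\Y*$ as well (because $\env{f}$ is real-valued there—or, more carefully, because one can restrict to the region where the inf is finite), and finite convex functions are locally Lipschitz on open sets, giving the claimed local Lipschitz continuity.

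For \ref{thm:proxconj}, I would apply the Fermat-type equivalence \eqref{eq:Fermat} to the extended objective. Writing the left-prox minimization with $\tilde f$ over $\R^n$, one has
\[
	x\in\prox_{\lambda f}(\bar y)
\iff
	x\in\argmin_{x'\in\R^n}\set{(\lambda\tilde f+\kernel)(x')-\innprod{\nabla\kernel(\bar y)}{x'}}
\iff
	\nabla\kernel(\bar y)\in\fsubdiff(\lambda\tilde f+\kernel)(x),
\]
where the last step is exactly \eqref{eq:Fermat} with $\bar\xi=\nabla\kernel(\bar y)$ and $f$ there replaced by the proper function $\lambda\tilde f+\kernel$ (if $\lambda\tilde f+\kernel$ is improper or identically $+\infty$, both sides are empty and the identity still holds trivially). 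Inverting gives $\prox_{\lambda f}(\bar y)=(\fsubdiff(\lambda\tilde f+\kernel))^{-1}(\nabla\kernel(\bar y))$, which is \ref{thm:proxconj}.

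The substantive part—and the main obstacle—is \ref{thm:hull}. Here I would start from the definition $\lambda\hull_\lambda f=-\lambda\env*(-\env f)$ and unwind it using \ref{thm:envconj}. Plugging $-\lambda\env f=-[\kernel*-(\lambda\tilde f+\kernel)^*]\circ\nabla\kernel$ into the right envelope $\env*$ and simplifying the Bregman terms (again via \eqref{eq:DFY}/\eqref{eq:D}, now in the second argument), the double-envelope collapses to a single Fenchel biconjugate of $\lambda\tilde f+\kernel$, but restricted in a crucial way: the outer conjugation in $\env*$ only ``sees'' arguments of the form $\nabla\kernel(y)$ for $y\in\Y$, i.e.\ points of $\range\nabla\kernel$, which is why the indicator $\indicator_{\range\nabla\kernel}$ appears and one gets
\[
	\lambda\hull_\lambda f
=
	\bigl((\lambda\tilde f+\kernel)^*+\indicator_{\range\nabla\kernel}\bigr)^*\restr_{\X}-\kernel.
\]
For the chain of inequalities, I would note $\indicator_{\range\nabla\kernel}\ge 0$, so $((\lambda\tilde f+\kernel)^*+\indicator_{\range\nabla\kernel})^*\le(\lambda\tilde f+\kernel)^{**}=\conv*(\lambda\tilde f+\kernel)$, and then restrict to $\X$ and invoke \cref{thm:fext}\ref{thm:cvxext} to identify $(\conv*(\lambda\tilde f+\kernel))\restr_{\X}=\conv*(\lambda f+\kernel)$; the remaining two inequalities $\conv*(\lambda f+\kernel)\le\conv(\lambda f+\kernel)\le\lambda f$ are \eqref{eq:convleq} applied to $\lambda f+\kernel$ after subtracting $\kernel$ (using the abuse-of-notation convention $\lambda f+\kernel=\lambda f+\kernel\restr_\X$, legitimate since $\X=\dom\kernel\supseteq\dom(\lambda f+\kernel)$). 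For the two equality cases: when $\kernel$ is essentially smooth and $1$-coercive, $\range\nabla\kernel=\Y*=\interior\dom\kernel*=\dom\kernel*$ is ``large enough'' and $(\lambda\tilde f+\kernel)^*$ already has its domain inside $\dom\kernel*$ because $1$-coercivity of $\kernel$ (via \cref{thm:ext}\ref{thm:ext:lsc}, noting $\lambda\tilde f+\kernel$ inherits $1$-coercivity, at least for $\lambda$ below the threshold—but here the argument must be run with the sum directly, as $1$-coercivity of $\kernel$ forces $1$-coercivity of $\lambda\tilde f+\kernel$ whenever $\tilde f$ is bounded below on its domain, and in the improper/unbounded case the equality is again trivial) makes adding $\indicator_{\range\nabla\kernel}$ on $\dom(\lambda\tilde f+\kernel)^*$ inconsequential, so the first inequality is an equality. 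When instead $\kernel$ is $1$-coercive and $f$ is $\kernel$-prox-bounded with $\lambda<\pb$, \cref{thm:ext}\ref{thm:ext:lsc} gives that $\lambda\tilde f+\kernel$ is $1$-coercive, hence $\conv*(\lambda\tilde f+\kernel)=\conv(\lambda\tilde f+\kernel)$ by \cite[Lem. 3.3(i)]{benoist1996what} (exactly as in the proof of \cref{thm:fext}\ref{thm:cvxext}), which upon restriction to $\X$ turns the second inequality into an equality. The delicate points I expect to wrestle with are (i) justifying the appearance and exact form of $\indicator_{\range\nabla\kernel}$ when collapsing the double envelope—this requires care about whether $\nabla\kernel$ is surjective onto $\dom\kernel*$ and about what happens at boundary points—and (ii) cleanly handling the degenerate cases ($f\equiv\infty$, $\lambda\tilde f+\kernel$ improper) so that the identities and the $1$-coercivity invocations remain valid without implicitly assuming properness of $f$ or of the sum.
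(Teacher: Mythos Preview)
Your plan follows the paper's proof essentially step for step: parts \ref{thm:envconj} and \ref{thm:proxconj} are derived exactly as you outline (expand \(\D\) via \eqref{eq:DFY}, recognize a Fenchel conjugate, then apply \eqref{eq:Fermat}), and for \ref{thm:hull} the paper does precisely the double-envelope unwinding you describe, arriving at \(\sup_{y\in\Y}\{\innprod{\nabla\kernel(y)}{x}-(\lambda\tilde f+\kernel)^*(\nabla\kernel(y))\}-\kernel(x)\), which is where the indicator \(\indicator_{\range\nabla\kernel}\) comes from. The chain of inequalities and the second equality case are handled exactly as you propose, via \cref{thm:cvxext} and \cref{thm:ext}.

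The one place where you are overcomplicating things is the \emph{first} equality case in \ref{thm:hull}. You correctly note that under essential smoothness and 1-coercivity one has \(\range\nabla\kernel=\interior\dom\kernel*=\dom\kernel*\); but since 1-coercivity gives \(\dom\kernel*=\R^n\), this chain says \(\range\nabla\kernel=\R^n\), so \(\indicator_{\range\nabla\kernel}\equiv 0\) and the equality is immediate. There is no need to argue about \(\dom(\lambda\tilde f+\kernel)^*\) sitting inside \(\dom\kernel*\), nor to invoke 1-coercivity of \(\lambda\tilde f+\kernel\) (which would require prox-boundedness that is not assumed here). The paper's argument is just this one line. Your detour through domain considerations and the parenthetical about bounded-below \(\tilde f\) is unnecessary and, as you yourself flag, fragile in degenerate cases; drop it.
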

	\begin{proof}~
		\begin{itemize}
		\item ``\ref{thm:envconj} and \ref{thm:proxconj}''
			It follows from \eqref{eq:DFY} that for any \(y\in\Y\) one has
			\begin{align*}
				\lambda\env{f}(y)
			={} &
				-\sup_{x\in\R^n}\set{
					\innprod{\nabla\kernel(y)}{x}
					-
					(\lambda\tilde f+\kernel)(x)
				}
				+
				\kernel*(\nabla\kernel(y))
			\end{align*}
			which is \(-(\lambda\tilde f+\kernel)^*(\nabla\kernel(y))+\kernel*(\nabla\kernel(y))\), as claimed.
			Notice that, since \(\nabla\kernel(y)\in\dom\lsubdiff\kernel*\subseteq\X*\), this expression is well determined (albeit possibly \(-\infty\)).
			The same reasoning for the set of minimizers, in combination with \eqref{eq:Fermat}, leads to the expression of \(\prox_{\lambda f}\).
			Finally, when \(\kernel\) is Legendre \(\env{f}\circ\nabla\kernel*\) is the difference of convex functions that are finite-valued on the nonempty open convex set \(\Y*\), and thus locally Lipschitz.

		\item ``\ref{thm:hull}''
			By direct computation, for any \(x\in\X\) one has
			\begin{align*}
				\lambda\hull_{\lambda}{f}(x)
			={} &
				-\lambda\env*{\bigl(-\env{f}\bigr)}(x)
			\\
			={} &
				\sup_{y\in\Y}\set{
					\lambda\env{f}(y)
					-
					\D(x,y)
				}
			\\
			={} &
				\sup_{y\in\Y}\inf*_{x'\in\X}\set{
					\lambda f(x')
					+
					\D(x',y)
					-
					\D(x,y)
				}
			\\
			={} &
				\sup_{y\in\Y}\inf*_{x'\in\R^n}\set{
					(\lambda\tilde f+\kernel)(x')
					+
					\innprod{\nabla\kernel(y)}{x-x'}
				}
				-
				\kernel(x)
			\\
			={} &
				\sup_{y\in\Y}\set{
					\innprod{\nabla\kernel(y)}{x}
					-
					\sup_{\mathclap{x'\in\R^n}}\set{
						\innprod{\nabla\kernel(y)}{x'}
						-
						(\lambda\tilde f+\kernel)(x')
					}
				}
				-
				\kernel(x)
			\\
			={} &
				\sup_{y\in\Y}\set{
					\innprod{\nabla\kernel(y)}{x}
					-
					(\lambda\tilde f+\kernel)^*\bigl(\nabla\kernel(y)\bigr)
				}
				-
				\kernel(x),
			\end{align*}
			which is the first identity in the statement.
			Note that the fourth identity above holds since \(\kernel\) is finite at \(y\in\Y\).
			In turn, the claimed inequalities follow from \cref{thm:cvxext} together with the fact that
			\(
				\bigl((\lambda\tilde f+\kernel)^*+\indicator_{\range\nabla\kernel}\bigr)^*
			\leq
				\bigl((\lambda\tilde f+\kernel)^*\bigr)^*
			\),
			see \cite[Prop. 13.16(ii)]{bauschke2017convex}.

			If \(\kernel\) is 1-coercive and \(\lambda<\pb\), then \(\lambda\tilde f+\kernel\) is 1-coercive by \cref{thm:ext:1co}, hence \(\conv(\lambda\tilde f+\kernel)=\conv*(\lambda\tilde f+\kernel)\) by \cref{thm:cvxext}.

			If instead \(\kernel\) is essentially smooth and 1-coercive, then \(\range\nabla\kernel=\R^n\) and thus
			\(
				\bigl((\lambda\tilde f+\kernel)^*+\indicator_{\range\nabla\kernel}\bigr)^*
				\restr_{\X}
			=
				(\lambda\tilde f+\kernel)^{**}
				\restr_{\X}
			=
				\conv*(\lambda f+\kernel)
			\)
			by \cref{thm:cvxext}.
		\qedhere
		\end{itemize}\let\qed\relax
	\end{proof}

	Neither assumption of Legendreness or 1-coercivity can be waived for the first inequality in \cref{thm:hull} to hold as equality, even if \(f\) is convex.
	Indeed, the following examples demonstrate how the proximal hull may fail to capture meaningful properties of \(f\) when either requirement is not met.

	\begin{example}[importance of essential smoothness for the \(\kernel\)-proximal hull]\label{ex:esssmooth}%
		Consider \(\kernel=\j+\indicator_{\X}\) where \(\X=[-1,1]\), which is strongly convex, 1-coercive, but not essentially smooth, and let \(f=\indicator_{\set{p}}\restr_{\X}\) for some \(p\in(-1,1)\).
		For any \(\lambda>0\) it is easy to see that \(\env{f}(y)=\frac{1}{2\lambda}(p-y)^2\) and
		\[
			\lambda\hull_{\lambda}{f}(x)
		=
			\sup_{y\in(-1,1)}\set{
				\tfrac{1}{2}(p-y)^2
				-
				\tfrac{1}{2}(x-y)^2
			}
		=
			\tfrac{1}{2}
			(p^2-x^2+2\abs{p-x}).
		\]
		For any \(x\in\X\) this agrees with the formula in \cref{thm:hull}, having
		\begin{align*}
			\bigl[(\lambda\tilde f+\kernel)^*+\indicator_{\range\nabla\kernel}\bigr]^*(x)
			-
			\kernel(x)
		={} &
			\bigl[p{}\cdot{}-\tfrac{1}{2}p^2+\indicator_{(-1,1)}\bigr]^*(x)
			-
			\tfrac{1}{2}x^2
		\\
		={} &
			\sup_{\abs{\xi}<1}\set{
				\xi(x-p)
			}
			+
			\tfrac{1}{2}p^2
			-
			\tfrac{1}{2}x^2,
		\end{align*}
		which does lower bound
		\(
			\conv*(\lambda f+\kernel)(x)-\kernel(x)
		=
			\lambda f(x)
		\),
		but agrees with it only at \(x=p\).
	\end{example}

	\begin{example}[importance of 1-coercivity for the \(\kernel\)-proximal hull]\label{ex:1co}%
		Consider \(\kernel=\exp\) for \(\X=\R\), which is essentially smooth (in fact, Legendre) but not 1-coercive.
		Let \(f=\indicator_{\set{0}}\), so that \(f=\tilde f\) and for all \(\lambda>0\) one has
		\(
			(\lambda\tilde f+\kernel)^*\equiv-1
		\).
		Then,
		\[
			\lambda\hull_{\lambda}{f}
		=
			\bigl(
				(\lambda\tilde f+\kernel)^*+\indicator_{\range\nabla\kernel}
			\bigr)^*
			-
			\kernel
		=
			\bigl(
				(\lambda\tilde f+\kernel)^*+\indicator_{\R_{++}}
			\bigr)^*
			-
			\kernel
		=
			1-\exp+\indicator_{\R_-}.
		\]
		As granted by \cref{thm:hull}, \(\lambda\hull_{\lambda}{f}\) furnishes a global lower bound to
		\(
			\conv*(\lambda f+\kernel)-\kernel(x)
		=
			\lambda f
		\),
		but agrees with it only on \(\R_+\).
	\end{example}

\subsubsection{Change of dgf}
	Interestingly, the left Bregman proximal map, Moreau envelope, and proximal hull can always be represented as Euclidean counterparts of an appropriately transformed function.
	A \emph{local} version of this correspondence was established in \cite[Thm. 3.8]{ahookhosh2021bregman}, where it enabled the derivation of differentiability and continuity properties via standard Euclidean results involving \emph{prox-regularity} \cite{poliquin1996proxregular}.\footnote{%
		These conclusions in \cite{ahookhosh2021bregman} required additional assumptions on the dgf \(\kernel\), such as local strong convexity and local Lipschitz continuity of \(\nabla\kernel\) on \(\interior\dom\kernel\).
		These limitations are overcome in the more refined framework of \cite{laude2020bregman}, where a notion of prox-regularity adapted to the Bregman setting is introduced.
	}

	The following lemma enables a \emph{global} reformulation of these Bregman objects.
	In fact, the Euclidean form is obtained as a special case of a more general result that allows for the \emph{change of reference function}, that is, replacing the original dgf \(\kernel\) with an alternative one, under suitable conditions.
	Similar expressions will be developed for the right Bregman operators in \cref{sec:changedgf*}.

	\begin{lemma}[change of dgf for left Bregman operators]\label{thm:chagedgf}%
		Let \(\func{\psi}{\R^n}{\Rinf}\) be a Legendre dgf such that \(\dom\psi\supseteq\dom\kernel\) and \(\range\nabla\psi\supseteq\range\nabla\kernel\).
		Given \(\func{f}{\X}{\Rinf}\) and \(\lambda>0\), define \(\func{F}{\dom\psi}{\Rinf}\) as
		\[
			F
		\coloneqq
			f+\tfrac{1}{\lambda}(\kernel-\psi)
			\text{ on \(\X\), and \(\infty\) on \(\dom\psi\setminus\X\).}
		\]
		Then, the following identities hold:
		\begin{enumerate}
		\item \label{thm:chagedgf:prox}%
			\(
				\prox_{\lambda f}(y)
			=
				\prox^\psi_{\lambda F}
				\circ
				\nabla\psi^*
				\circ
				\nabla\kernel(y)
			\)
			for all \(y\in\Y\).

		\item \label{thm:chagedgf:env}%
			\(
				\env{f}(y)
			=
				\bigl[
					\env_{\lambda}^\psi F
					\circ
					\nabla\psi^*
					+
					\tfrac{\kernel*-\psi^*}{\lambda}
				\bigr]
				\circ
				\nabla\kernel(y)
			\)
			for all \(y\in\Y\).
		\end{enumerate}
		If \(\kernel\) is Legendre and 1-coercive (and thus so is \(\psi\)), then the following also holds:%
		\begin{enumerate}[resume]
		\item \label{thm:chagedgf:hull}%
			\(
				\hull_{\lambda}{f}(x)
			=
				\bigl[
					\hull_{\lambda}^\psi F
					-
					\tfrac{\kernel-\psi}{\lambda}
				\bigr](x)
			\)
			for all \(x\in\X\).
		\end{enumerate}
	\end{lemma}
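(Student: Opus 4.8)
The plan is to reduce all three identities to a single pointwise comparison of the two proximal objective functions, obtained after matching the ``dual points'' $\nabla\kernel(y)$ and $\nabla\psi(\bar y)$. Fix $y\in\Y$. Since $\psi$ is Legendre, $\nabla\psi\colon\interior\dom\psi\to\range\nabla\psi$ is a bijection with inverse $\nabla\psi^*$ (\cref{thm:D*}); because $\nabla\kernel(y)\in\range\nabla\kernel\subseteq\range\nabla\psi$, the point $\bar y\coloneqq\nabla\psi^*(\nabla\kernel(y))$ is well defined in $\interior\dom\psi$ and satisfies $\nabla\psi(\bar y)=\nabla\kernel(y)$. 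Expanding $\D(x,y)$ and $\D_\psi(x,\bar y)$ via their definitions and using the Fenchel equalities $\kernel(y)+\kernel*(\nabla\kernel(y))=\innprod{y}{\nabla\kernel(y)}$ and $\psi(\bar y)+\psi^*(\nabla\kernel(y))=\innprod{\bar y}{\nabla\kernel(y)}$, a direct computation gives, for every $x\in\X$,
\[
	f(x)+\tfrac1\lambda\D(x,y)
	=
	F(x)+\tfrac1\lambda\D_\psi(x,\bar y)+\tfrac1\lambda(\kernel*-\psi^*)(\nabla\kernel(y)).
\]
The last term is a finite real number---both conjugate values are finite by the very Fenchel equalities just invoked---so no $\infty-\infty$ indeterminacy arises; moreover the identity persists trivially, as $\infty=\infty$, for $x\in\dom\psi\setminus\X$, where $F(x)=\infty$ and $\D(x,y)=\infty$ since $x\notin\dom\kernel$.

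Given this identity, statement (ii) follows by taking the infimum over $x\in\dom\psi$: on the left this reduces to the infimum over $\X$ because $f$ lives on $\X$ and $\D(\cdot,y)\equiv\infty$ off $\dom\kernel$, while on the right it is exactly $\env_{\lambda}^\psi F(\bar y)$; the finite constant is then pulled outside the infimum. Statement (i) follows because adding a finite constant leaves the set of minimizers unchanged, whence $\prox_{\lambda f}(y)=\prox^\psi_{\lambda F}(\bar y)$. Substituting $\bar y=\nabla\psi^*(\nabla\kernel(y))$ in both recovers the stated composite forms, and no coercivity of $\kernel$ is needed here since (i)--(ii) are purely pointwise.

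For (iii) I would start from the defining relation $\hull_{\lambda}{f}=-\env*_{\lambda}{(-\env_{\lambda}{f})}$ and the definition of the right envelope to write, for $x\in\X$,
\[
	\hull_{\lambda}{f}(x)=\sup_{y\in\Y}\set{\env_{\lambda}{f}(y)-\tfrac1\lambda\D(x,y)}.
\]
Subtracting $f(x)$ and multiplying by $\lambda$ in the displayed identity yields $\D(x,y)=(\kernel-\psi)(x)+\D_\psi(x,\bar y)+(\kernel*-\psi^*)(\nabla\kernel(y))$; inserting this together with (ii), the $(\kernel*-\psi^*)(\nabla\kernel(y))$ contributions cancel and each summand becomes $\env_{\lambda}^\psi F(\bar y)-\tfrac1\lambda\D_\psi(x,\bar y)-\tfrac1\lambda(\kernel-\psi)(x)$. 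What remains is to check that the supremum over $y\in\Y$ exhausts the supremum over all $y'\in\interior\dom\psi$ defining $\hull_{\lambda}^\psi F(x)$, i.e.\ that $y\mapsto\bar y=\nabla\psi^*(\nabla\kernel(y))$ is onto $\interior\dom\psi$. This is exactly where the extra hypotheses enter: since $\kernel$ is Legendre (hence essentially smooth) and $1$-coercive, $\range\nabla\kernel=\R^n$ (as in the proof of \cref{thm:hull}); combined with $\range\nabla\psi\supseteq\range\nabla\kernel$ this forces $\range\nabla\psi=\R^n$---equivalently $\psi^*$ has full domain and $\psi$ is $1$-coercive---and $\nabla\psi^*$ is a bijection from $\R^n$ onto $\interior\dom\psi$. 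Consequently $y\mapsto\bar y$ maps $\Y$ onto $\interior\dom\psi$, the two suprema coincide, and (iii) follows.

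The only genuinely delicate point is this surjectivity step for (iii): without $1$-coercivity of $\kernel$ the range of $y\mapsto\bar y$ may be a proper subset of $\interior\dom\psi$, and the hull identity can then fail. Everything else is careful bookkeeping---keeping all conjugate values (hence the constants being shuffled around) finite, and confirming the sets over which the infima and suprema are taken.
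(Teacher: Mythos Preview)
Your argument for (i) and (ii) is essentially the paper's: both derive the pointwise identity
\(\D(x,y)=(\kernel-\psi)(x)+(\kernel*-\psi^*)(\nabla\kernel(y))+\D_\psi(x,\bar y)\)
(the paper labels this \eqref{eq:Dpsi}) and then take infimum/argmin.

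For (iii) you take a genuinely different route. The paper does not go back to the supremum definition of the hull; instead it invokes \cref{thm:hull}, which under ``Legendre and 1-coercive'' gives \(\lambda\hull_{\lambda}f=[\lambda\tilde f+\kernel]^{**}-\kernel\), then simply rewrites \(\lambda\tilde f+\kernel=\lambda(\tilde f+\tfrac{\kernel-\psi}{\lambda})+\psi\) and applies \cref{thm:hull} again with \(\psi\) in place of \(\kernel\). Your approach is more self-contained---it avoids the conjugate machinery of \cref{thm:hull} and stays at the level of the parametric sup---but the price is the explicit surjectivity check for \(y\mapsto\nabla\psi^*(\nabla\kernel(y))\). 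That check is exactly what the extra hypotheses buy, and you identify it correctly: \(\kernel\) Legendre and 1-coercive forces \(\range\nabla\kernel=\R^n\), hence \(\range\nabla\psi=\R^n\), hence the reparametrisation is onto \(\interior\dom\psi\). The paper's route hides this same fact inside the condition needed for the equality case of \cref{thm:hull} to apply to both \(\kernel\) and \(\psi\). Both arguments are valid; the paper's is shorter once \cref{thm:hull} is available, yours makes the role of 1-coercivity more transparent.
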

	\begin{proof}%
		~\begin{itemize}
		\item ``\ref{thm:chagedgf:prox} and \ref{thm:chagedgf:env}''
			By using \eqref{eq:DFY}, for any \(x\in\X\subseteq\dom\psi\) and \(y\in\Y\) one has that
			\[
				\D_\psi\bigl(x,\nabla\psi^*\circ\nabla\kernel(y)\bigr)
			=
				\psi(x)
				+
				\psi^*(\nabla\kernel(y))
				-
				\innprod{\nabla\kernel(y)}{x},
			\]
			with each term being finite by assumption, since \(\nabla\kernel(y)\in\range\nabla\kernel\subseteq\range\nabla\psi=\dom\nabla\psi^*\) by Legendreness.
			Therefore,
			\begin{align*}
				\D(x,y)
			={} &
				\kernel(x)
				+
				\kernel*(\nabla\kernel(y))
				-
				\innprod{\nabla\kernel(y)}{x}
			\\
			={} &
				(\kernel-\psi)(x)
				+
				(\kernel*-\psi^*)(\nabla\kernel(y))
				+
				\D_\psi\bigl(x,\nabla\psi^*\circ\nabla\kernel(y)\bigr).
			\numberthis\label{eq:Dpsi}
			\end{align*}
			After dividing by \(\lambda\) and adding \(f(x)\), the minimization with respect to \(x\) yields the claimed expressions for \(\env{f}(y)\) and \(\prox_{\lambda f}(y)\).

		\item ``\ref{thm:chagedgf:hull}''
			When \(\kernel\) is Legendre and 1-coercive, it follows from the assumptions on \(\psi\) that \(\range\nabla\kernel=\range\nabla\psi=\R^n\), and in particular that \(\psi\) too is 1-coercive.
			The formula from the \(\kernel\)-proximal hull follows from the fact that
			\begin{align*}
				\lambda\hull_{\lambda}f(x)
			={} &
				[\lambda\tilde f+\kernel]^{**}(x)-\kernel(x)
			\\
			={} &
				\left[
					\lambda\bigl(\tilde f+\tfrac{\kernel-\psi}{\lambda}\bigr)
					+
					\psi
				\right]^{**}\!\!(x)
				-
				\psi(x)
				-
				(\kernel-\psi)(x)
			\\
			={} &
				\lambda\hull_\lambda^\psi\left(
					\tilde f+\tfrac{\kernel-\psi}{\lambda}
				\right)(x)
				-
				(\kernel-\psi)(x),
			\end{align*}
			where the first and last identities owe to \cref{thm:hull} (since \(\indicator_{\range\nabla\kernel}=\indicator_{\R^n}\equiv0\)).
		\qedhere
		\end{itemize}\let\qed\relax
	\end{proof}

	\begin{corollary}[Euclidean form of left Bregman operators]\label{thm:Euclidean}%
		Given a function \(\func{f}{\X}{\Rinf}\), extend it to \(\func{\tilde f}{\R^n}{\Rinf}\) by setting it to \(\infty\) on \(\R^n\setminus\X\) as in \cref{def:fext}.
		Then, for any \(\lambda>0\) the following identities hold:
		\begin{enumerate}
		\item
			\(
				\prox_{\lambda f}
			=
				\Eprox_{\lambda\left(\tilde f+\frac{\kernel-\j}{\lambda}\right)}\circ\nabla\kernel
			\).
		\item
			\(
				\env{f}
			=
				\bigl[
					\Eenv_{\lambda}(\tilde f+\tfrac{\kernel-\j}{\lambda})
					+
					\tfrac{\kernel*-\j}{\lambda}
				\bigr]
				\circ
				\nabla\kernel
			\).
		\end{enumerate}
		If \(\kernel\) is Legendre and 1-coercive, then the following also holds:
		\begin{enumerate}[resume]
		\item
			\(
				\hull_{\lambda}{f}
			=
				\bigl[
					\Ehull_{\lambda}(\tilde f+\tfrac{\kernel-\j}{\lambda})
					-
					\tfrac{\kernel-\j}{\lambda}
				\bigr]\restr_{\X}
			\).
		\end{enumerate}
	\end{corollary}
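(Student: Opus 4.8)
The plan is to read off all three identities as the special case $\psi=\j$ of the change-of-dgf lemma for left Bregman operators stated just above. The first step is to verify that $\j$ is an admissible auxiliary dgf there: it is a Legendre dgf with $\dom\j=\R^n\supseteq\dom\kernel$ and $\range\nabla\j=\R^n\supseteq\range\nabla\kernel$, so its hypotheses are met under nothing beyond the standing assumption on $\kernel$. Moreover $\j^*=\j$ and $\nabla\j=\nabla\j^*=\mathrm{id}$, while $\D_\j(x,y)=\tfrac12\norm{x-y}^2$; consequently, for any $\func{h}{\R^n}{\Rinf}$ the $\j$-Bregman proximal map, Moreau envelope, and proximal hull of $h$ are exactly the Euclidean objects $\Eprox_{\lambda h}$, $\Eenv_\lambda h$, and $\Ehull_\lambda h$ (for the last one, note that $\D_\j$ is symmetric, so the left and right $\j$-envelopes coincide and $\hull_\lambda^\j h=-\Eenv_\lambda(-\Eenv_\lambda h)=\Ehull_\lambda h$).

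Next I would spell out the transformed function $F$ of that lemma: with $\psi=\j$ it reads $F=f+\tfrac1\lambda(\kernel-\j)$ on $\X$ and $\infty$ on $\R^n\setminus\X$, which is precisely $\tilde f+\tfrac1\lambda(\kernel-\j)$ — off $\X$ one has $\kernel\equiv\infty$, so this sum equals $\infty$ there, consistently with $\tilde f\equiv\infty$, and no $\infty-\infty$ ambiguity arises since $\j$ is finite-valued. Substituting $\nabla\j^*=\mathrm{id}$ and $\psi^*=\j^*=\j$ into the three displayed identities of the lemma then yields claims (i) and (ii) at once, and claim (iii) under the additional hypothesis that $\kernel$ is Legendre and 1-coercive; this hypothesis forces $\psi=\j$ to be Legendre and 1-coercive as well, exactly as the lemma requires, and it enters through the lemma's proof, tracing back to \cref{thm:hull}. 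The restriction $\restr_\X$ in claim (iii) merely records that $\hull_\lambda f$ is a function on $\X$, whereas $\Ehull_\lambda(\tilde f+\tfrac{\kernel-\j}\lambda)$ is defined on the whole of $\R^n$.

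I do not anticipate any genuine obstacle here: the mathematical substance is entirely contained in the preceding change-of-dgf lemma, and this corollary is a bookkeeping specialization of it. The only points that warrant (minimal) attention are that the $\j$-Bregman operators really do reduce to the Euclidean ones, and that $F$ coincides with $\tilde f+\tfrac{\kernel-\j}\lambda$ as functions on $\R^n$; both are immediate from the definitions. Should a self-contained derivation be preferred, claims (i)--(iii) can instead be obtained from \cref{thm:envconj,thm:proxconj,thm:hull} together with the well-known Euclidean conjugate representations of the Moreau envelope, proximal mapping, and proximal hull, but routing through the change-of-dgf lemma is shorter and avoids recomputation.
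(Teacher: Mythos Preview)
Your proposal is correct and matches the paper's approach exactly: the corollary is stated in the paper without proof, as an immediate specialization of the preceding change-of-dgf lemma with \(\psi=\j\). Your verification that \(\j\) meets the lemma's hypotheses and that the transformed function \(F\) coincides with \(\tilde f+\tfrac{\kernel-\j}{\lambda}\) on all of \(\R^n\) is precisely the bookkeeping the paper leaves implicit.
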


	It should be noted that, despite the apparent \emph{resemblance} to the Euclidean case, this correspondence does \emph{not} trivialize the Bregman framework.
	The reformulated function \(\tilde f+\frac{\kernel-\j}{\lambda}\) depends intricately on both the dgf \(\kernel\) and the stepsize \(\lambda\), in a nested and non-separable way.
	As a result, classical Euclidean arguments cannot in general be directly transferred, and a genuinely Bregman-specific analysis of all the operators remains necessary.

		\subsection{The right proximal mapping and Moreau envelope}\label{sec:right}%
			While our treatment of the left Bregman-Moreau envelope began with the notion of prox-boundedness, our approach to the \emph{right} envelope follows an inverse route.
As we will see, a complete account of the consequences of \emph{right} prox-boundedness capturing and extending existing results revolves around properties of a particular \emph{epi-composite} mapping, whose detailed account requires some technical preliminaries.
For these reasons, we defer the formal definition of \emph{right \(\kernel\)-prox-boundedness} until later in the section, where we also establish its role in ensuring well-posedness of the right Bregman proximal mapping and the regularity of the associated envelope.

Instead, we start by providing a representation of the right envelope and proximal hull involving Fenchel conjugates that serves as the right counterpart of \cref{thm:left*}.
These expressions in the full generality of \(\kernel\) involve the notion of \emph{epi-composition} \cite[Eq. 1(17)]{rockafellar1998variational}, also known as \emph{infimal postcomposition} \cite[Def. 12.34]{bauschke2017convex}, \emph{marginal function} \cite[Eq. (3.24)]{auslender2002asymptotic}, or \emph{image function} \cite[Def. 5.1]{themelis2020douglas}.
We follow the terminology of \cite{rockafellar1998variational}, so that the \emph{epi-composition} of a function \(\func{g}{\Y}{\Rinf}\) by \(\func{\nabla\kernel}{\Y}{\R^n}\) is \(\func{\epicomp}{\R^n}{\Rinf}\) given by
\begin{equation}\label{eq:epicomp}
	\R^n\ni\xi
\mapsto
	\epicomp(\xi)
\coloneqq
	\inf_{y\in\Y}\set{g(y)}[\nabla\kernel(y)=\xi]
\end{equation}
(with \(\epicomp(\xi)=\infty\) whenever \(\xi\notin\range\nabla\kernel\)).\footnote{%
	Note that there is no need to involve the canonical extension \(\tilde g\) as in \cref{def:fext}, since \(g\) and \(\nabla\kernel\) have same domain of definition \(\Y\).
}
When \(\kernel\) is Legendre, \eqref{eq:epicomp} simplifies as \(\epicomp=g\circ\nabla\kernel*\) on \(\Y*\) and \(\infty\) elsewhere, and the expression of the right envelope in the following \cref{thm:env*conj} recovers the identity in \cite[Prop. 2.4(ii)]{bauschke2018regularizing}\footnote{%
	With a typo in the statement of \cite[Prop. 2.4(ii)]{bauschke2018regularizing}: \(\nabla f\) in place of \(\nabla f^*\) in the notation therein.
}
(up to exercising care in the domains of definition; see the technical discussion in \cref{sec:pb*}).
To the best of our knowledge, all the expressions in absence of Legendreness are instead novel.

\begin{lemma}\label{thm:right*}%
	Given \(\func{g}{\Y}{\Rinf}\), let \(\func{\epicomp}{\R^n}{\Rinf}\) be as in \eqref{eq:epicomp}.
	For any \(\lambda>0\) the following hold:
	\begin{enumerate}
	\item \label{thm:env*conj}%
		\(
			\lambda\env*{g}
		=
			\left[
				\kernel
				-
				\bigl(\kernel*+\lambda\epicomp\bigr)^*
			\right]\Restr_{\X}
		\).

	\item \label{thm:prox*conj}%
		\(
			\nabla\kernel\circ\prox*_{\lambda g}
		=
			\fsubdiff
			\bigl(
				\kernel*
				+
				\lambda\epicomp
			\bigr)^{-1}
		\).
		In particular, when \(\kernel\) is Legendre one has that
		\(
			\prox*_{\lambda g}
		=
			\nabla\kernel*
			\circ
			\fsubdiff
			\bigl(
				\kernel*
				+
				\lambda g\circ\nabla\kernel*
			\bigr)^{-1}
		\).

	\item \label{thm:hull*}%
		\(
			\begin{array}[t]{@{}r@{}l@{}}
				\lambda\hull*_{\lambda}{g}
			={} &
				\bigl(
					\bigl[
						(\kernel*+\lambda\epicomp)^*
						+
						\indicator_{\X}
					\bigr]^*
					-
					\kernel*
				\bigr)\circ\nabla\kernel
			\\
			\leq{} &
				\bigl(
					\conv*(\kernel*+\lambda\epicomp)
					-
					\kernel*
				\bigr)\circ\nabla\kernel
			\\
			\leq{} &
				\bigl(
					\conv(\kernel*+\lambda\epicomp)
					-
					\kernel*
				\bigr)\circ\nabla\kernel
			\\
			\leq{} &
				\lambda g.
			\end{array}
		\)

		If \(\dom\kernel=\R^n\) (i.e., \(\X=\Y=\R^n\)), then the first inequality is an equality.%
	\end{enumerate}
\end{lemma}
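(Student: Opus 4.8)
The plan is to prove all three identities by the same device used in the proof of \cref{thm:left*}: rewrite the Bregman distance via the Fenchel-gap identity \eqref{eq:DFY}, but now minimize over the \emph{second} argument of $\D$; the interchange of infimization order then forces the epi-composition $\epicomp$ to surface.

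For \ref{thm:env*conj}, fix $\bar x \in \X$. Since $\D(\bar x, y) = \kernel(\bar x) + \kernel*(\nabla\kernel(y)) - \innprod{\nabla\kernel(y)}{\bar x}$ for every $y \in \Y$ by \eqref{eq:DFY}, and since besides $g(y)$ only $\nabla\kernel(y)$ enters the objective, I would split $\inf_{y\in\Y}$ into an inner infimum over each fiber $\{y \in \Y : \nabla\kernel(y) = \xi\}$ --- which by \eqref{eq:epicomp} contributes exactly $\lambda\epicomp(\xi)$ --- followed by an outer infimum over $\xi \in \R^n$. Pulling out the constant $\kernel(\bar x)$ gives $\lambda\env*{g}(\bar x) = \kernel(\bar x) + \inf_{\xi}\{(\kernel*+\lambda\epicomp)(\xi) - \innprod{\xi}{\bar x}\} = \kernel(\bar x) - (\kernel*+\lambda\epicomp)^*(\bar x)$, which is the claim; the restriction $\Restr_{\X}$ simply records that we only evaluate where $\kernel$ is finite, so no indeterminate $\infty - \infty$ arises. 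Running the same regrouping at the level of minimizers and invoking Fermat's rule \eqref{eq:Fermat} gives \ref{thm:prox*conj}: $\bar y \in \prox*_{\lambda g}(\bar x)$ exactly when $g(\bar y) = \epicomp(\nabla\kernel(\bar y))$ and $\bar\xi \coloneqq \nabla\kernel(\bar y)$ globally minimizes $(\kernel*+\lambda\epicomp) - \innprod{{}\cdot{}}{\bar x}$, i.e.\ $\bar x \in \fsubdiff(\kernel*+\lambda\epicomp)(\bar\xi)$; thus $\nabla\kernel\bigl(\prox*_{\lambda g}(\bar x)\bigr) = (\fsubdiff(\kernel*+\lambda\epicomp))^{-1}(\bar x)$. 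The Legendre specialization then follows because $\nabla\kernel$ is a bijection $\Y \to \Y*$ with inverse $\nabla\kernel*$ (\cref{thm:D*}), so $\epicomp = g\circ\nabla\kernel*$ on $\Y*$ and left-composition with $\nabla\kernel*$ removes the $\nabla\kernel$.

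For \ref{thm:hull*}, I would unfold $\hull*_{\lambda}{g} = -\env{(-\env*{g})}$ and apply the \emph{left}-envelope formula \cref{thm:envconj} with $f \coloneqq -\env*{g}$. The crux is to identify $\lambda\tilde f + \kernel$, where $\tilde f$ is the canonical extension of $f$ (\cref{def:fext}): by \ref{thm:env*conj}, $\lambda f = (\kernel*+\lambda\epicomp)^* - \kernel$ on $\X$, hence $\lambda\tilde f = (\kernel*+\lambda\epicomp)^* - \kernel$ on $\X$ and $+\infty$ off $\X$, so --- using that $\kernel$ is finite on $\X$ --- $\lambda\tilde f + \kernel = (\kernel*+\lambda\epicomp)^* + \indicator_{\X}$ on all of $\R^n$. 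Substituting into \cref{thm:envconj} yields $\lambda\hull*_{\lambda}{g} = -\lambda\env{(-\env*{g})} = \bigl([(\kernel*+\lambda\epicomp)^* + \indicator_{\X}]^* - \kernel*\bigr)\circ\nabla\kernel$, the first line of \ref{thm:hull*}. The remaining inequalities come from $[(\kernel*+\lambda\epicomp)^* + \indicator_{\X}]^* \leq (\kernel*+\lambda\epicomp)^{**} = \conv*(\kernel*+\lambda\epicomp) \leq \conv(\kernel*+\lambda\epicomp) \leq \kernel*+\lambda\epicomp$ --- by order-reversal of the Fenchel conjugate, \eqref{eq:h**}, and \eqref{eq:convleq} --- composed with $\nabla\kernel$ and offset by $-\kernel*$, which is finite along $\range\nabla\kernel \subseteq \X*$; the last bound reads $\lambda\epicomp\circ\nabla\kernel \leq \lambda g$, immediate from \eqref{eq:epicomp}. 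When $\dom\kernel = \R^n$ the indicator $\indicator_{\X}$ vanishes identically, the leftmost term collapses to the biconjugate $\conv*(\kernel*+\lambda\epicomp)$, and the first inequality becomes an equality.

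I expect the main obstacle to be the bookkeeping in \ref{thm:hull*}: correctly matching the restriction $\Restr_{\X}$ produced by \ref{thm:env*conj} with the canonical-extension convention so that $\lambda\tilde f + \kernel$ is \emph{exactly} $(\kernel*+\lambda\epicomp)^* + \indicator_{\X}$, and checking that no $\infty - \infty$ appears anywhere along the way --- which relies on $\kernel$ being finite on $\X$ and $\kernel*$ finite on $\range\nabla\kernel \subseteq \X*$. A secondary delicate point is in \ref{thm:prox*conj}: one must verify that membership $\bar y \in \prox*_{\lambda g}(\bar x)$ forces $g(\bar y) = \epicomp(\nabla\kernel(\bar y))$, i.e.\ that the infimum defining $\epicomp$ is attained along any prox point, and conversely that a minimizer of $(\kernel*+\lambda\epicomp) - \innprod{{}\cdot{}}{\bar x}$ lifts through $\nabla\kernel$ to a prox point.
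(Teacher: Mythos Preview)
Your proposal is correct and follows essentially the same approach as the paper: parts \ref{thm:env*conj} and \ref{thm:prox*conj} are handled identically via \eqref{eq:DFY} and the fiberwise decomposition into \(\epicomp\), while for \ref{thm:hull*} the paper performs the direct computation \(\sup_{x\in\X}\{\lambda\env*{g}(x)-\D(x,y)\}\) rather than citing \cref{thm:envconj}, but this is merely a packaging difference since that computation is precisely the content of \cref{thm:envconj}'s proof. Your identification \(\lambda\tilde f+\kernel=(\kernel*+\lambda\epicomp)^*+\indicator_{\X}\) and the ensuing chain of inequalities match the paper's reasoning exactly, and the delicate points you flag (the lifting of \(\argmin\) through \(\nabla\kernel\) in \ref{thm:prox*conj}, the \(\infty-\infty\) bookkeeping) are the right ones to watch, though the paper treats them with the same level of detail you propose.
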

\begin{proof}~
	\begin{itemize}
	\item ``\ref{thm:env*conj} and \ref{thm:prox*conj}''
		By using \eqref{eq:DFY}, for any \(x\in\X\) one has
		\begin{align*}
			\lambda\env*{g}(x)
		={} &
			\inf_{y\in\Y}\set{
				\lambda g(y)
				+
				\kernel*(\nabla\kernel(y))
				-
				\innprod{\nabla\kernel(y)}{x}
			}
			+
			\kernel(x)
		\\
		={} &
			\inf_{\substack{y\in\Y,\xi\in\R^n\\\xi=\nabla\kernel(y)}}\set{
				\lambda g(y)
				+
				\kernel*(\xi)
				-
				\innprod{\xi}{x}
			}
			+
			\kernel(x)
		\\
		={} &
			\inf_{\xi\in\R^n}\set{
				\lambda\epicomp(\xi)
				+
				\kernel*(\xi)
				-
				\innprod{\xi}{x}
			}
			+
			\kernel(x),
		\numberthis\label{eq:env*:epicomp}
		\end{align*}
		which is
		\(
			\kernel(x)
			-
			\bigl(\kernel*+\lambda\epicomp\bigr)^*(x)
		\),
		as claimed.
		The same reasoning for the set of minimizers, in combination with \eqref{eq:Fermat}, leads to the expression of \(\nabla\kernel\circ\prox*_{\lambda g}\).%

	\item ``\ref{thm:hull*}''
		By direct computation, for any \(y\in\Y\) one has
		\begin{align*}
			\lambda\hull*_{\lambda}{g}(y)
		={} &
			-\lambda\env{\bigl(-\env*{g}\bigr)}(y)
		\\
		={} &
			\sup_{x\in\X}\set{
				\lambda\env*{g}(x)
				-
				\D(x,y)
			}
		\\
		={} &
			\sup_{x\in\X}
			\set{
				\kernel(x)
				-
				\bigl(\kernel*+\lambda\epicomp\bigr)^*(x)
				-
				\D(x,y)
			}
		\\
		={} &
			\sup_{x\in\X}
			\set{
				\innprod{\nabla\kernel(y)}{x}
				-
				\bigl(\kernel*+\lambda\epicomp\bigr)^*(x)
			}
			-
			\kernel*(\nabla\kernel(y))
		\\
		={} &
			\Bigl[\bigl(\kernel*+\lambda\epicomp\bigr)^*+\indicator_{\X}\Bigr]^*(\nabla\kernel(y))
			-
			\kernel*(\nabla\kernel(y))
		\end{align*}
		which is the first identity in the statement.
		The second and third inequalities follow from the same arguments as in the proof of \cref{thm:hull}.
		The last one owes to the fact that, for any \(y\in\Y\),
		\[
			\lambda\epicomp(\nabla\kernel(y))
		=
			\inf_{y':\nabla\kernel(y')=\nabla\kernel(y)} \lambda g(y')
		\leq
			\lambda g(y).
		\]
		When \(\X=\R^n\), it is clear that the first inequality holds as equality.
	\qedhere
	\end{itemize}\let\qed\relax
\end{proof}

\subsubsection{Change of dgf}\label{sec:changedgf*}%
	As with the left Bregman operators, the right counterparts can also be reformulated in terms of an alternative dgf \(\psi\).
	The choice \(\psi=\j\) yields representations involving standard Euclidean objects.

	\begin{lemma}[change of dgf for right Bregman operators]\label{thm:chagedgf*}%
		Let \(\func{\psi}{\R^n}{\Rinf}\) be a Legendre dgf such that \(\dom\psi\supseteq\dom\kernel\) and \(\range\nabla\psi\supseteq\range\nabla\kernel\).
		Given \(\func{g}{\Y}{\Rinf}\) and \(\lambda>0\), define \(\func{G}{\interior\dom\psi}{\Rinf}\) as
		\[
			G
		\coloneqq
			\left[
				\epicomp
				+
				\tfrac{1}{\lambda}(\kernel*-\psi^*)
			\right]
			\circ
			\nabla\psi.
		\]
		Then, the following identities hold:
		\begin{enumerate}
		\item \label{thm:chagedgf:prox*}%
			\(
				\nabla\kernel
				\circ
				\prox*_{\lambda g}(x)
			=
				\nabla\psi
				\circ
				\prox*^\psi_{\lambda G}(x)
			\)
			for all \(x\in\X\).
		\item \label{thm:chagedgf:env*}%
			\(
				\env*{g}(x)
			=
				\bigl[
					\env*_{\lambda}^\psi G
					+
					\tfrac{\kernel-\psi}{\lambda}
				\bigr](x)
			\)
			for all \(x\in\X\).
		\end{enumerate}
		If \(\kernel\) is Legendre and has full domain (and so does \(\psi\)), the following also holds:%
		\begin{enumerate}[resume]
		\item \label{thm:chagedgf:hull*}%
			\(
				\kernel*
				+
				\lambda
				\hull*_{\lambda}{g}
				\circ\nabla\kernel*
			=
				\psi^*
				+
				\lambda
				\hull*_{\lambda}^\psi G
				\circ\nabla\psi^*
			\)
			on \(\Y*\).
		\end{enumerate}
	\end{lemma}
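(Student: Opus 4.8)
The plan is to reduce all three identities to one \emph{master identity} between conjugate-type objects, exactly as for the left Bregman operators, where the analogous reduction rested on the Bregman-distance identity \eqref{eq:Dpsi}. Concretely, I would first establish
\begin{equation}
	\psi^* + \lambda\bigl(\nabla\psi\mathbin{\triangleright} G\bigr)
=
	\kernel* + \lambda\epicomp
	\qquad\text{as functions }\R^n\to\Rinf.
	\tag{$\star$}
\end{equation}
To prove $(\star)$ I would use that, \(\psi\) being a Legendre dgf, \(\func{\nabla\psi}{\interior\dom\psi}{\range\nabla\psi}\) is a bijection with inverse \(\nabla\psi^*\) (\cref{thm:D*}); hence for \(\eta\in\range\nabla\psi\) the fibre \(\set{y\in\interior\dom\psi}[\nabla\psi(y)=\eta]\) is the singleton \(\set{\nabla\psi^*(\eta)}\), so the epi-composition evaluates trivially and, by the very definition of \(G\),
\[
	\bigl(\nabla\psi\mathbin{\triangleright} G\bigr)(\eta)
=
	G\bigl(\nabla\psi^*(\eta)\bigr)
=
	\epicomp(\eta) + \tfrac1\lambda\bigl(\kernel*(\eta)-\psi^*(\eta)\bigr),
\]
while \(\bigl(\nabla\psi\mathbin{\triangleright} G\bigr)(\eta)=\infty\) for \(\eta\notin\range\nabla\psi\). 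For \(\eta\in\range\nabla\psi\), adding \(\psi^*(\eta)\) and scaling by \(\lambda\) makes the \(\psi^*\)-terms cancel and leaves \(\kernel*(\eta)+\lambda\epicomp(\eta)\); for \(\eta\notin\range\nabla\psi\), the left-hand side of \((\star)\) equals \(\psi^*(\eta)+\infty=\infty\) since \(\psi^*\) is proper, and \(\eta\notin\range\nabla\psi\supseteq\range\nabla\kernel\) forces \(\epicomp(\eta)=\infty\), so the right-hand side equals \(\kernel*(\eta)+\infty=\infty\) as well. The inclusion \(\range\nabla\kernel\subseteq\dom\kernel*\) guarantees that \(\kernel*\) is finite wherever \(\epicomp\) can take the value \(-\infty\), so both sides of \((\star)\) are unambiguously defined in \([-\infty,\infty]\).

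Once \((\star)\) is in place, item (i) follows by applying \cref{thm:prox*conj} twice---to the dgf \(\kernel\) with the function \(g\), and to the dgf \(\psi\) with the function \(G\)---so that both \(\nabla\kernel\circ\prox*_{\lambda g}\) and \(\nabla\psi\circ\prox*^\psi_{\lambda G}\) coincide with \(\bigl(\fsubdiff(\kernel*+\lambda\epicomp)\bigr)^{-1}=\bigl(\fsubdiff(\psi^*+\lambda(\nabla\psi\mathbin{\triangleright} G))\bigr)^{-1}\), the last equality holding because the Fréchet subdifferential is determined by the function alone. Item (ii) follows similarly from \cref{thm:env*conj}, which gives \(\lambda\env*{g}=[\kernel-(\kernel*+\lambda\epicomp)^*]\restr_{\X}\) and \(\lambda\env*_{\lambda}^\psi G=[\psi-(\psi^*+\lambda(\nabla\psi\mathbin{\triangleright} G))^*]\restr_{\dom\psi}\); since \(\X\subseteq\dom\psi\) and the two conjugates agree by \((\star)\), subtracting these on \(\X\) yields \(\lambda\env*{g}-\lambda\env*_{\lambda}^\psi G=\kernel-\psi\) there.

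For item (iii) I would specialize to \(\dom\kernel=\R^n\) (whence also \(\dom\psi=\R^n\)) with \(\kernel\) and \(\psi\) Legendre. Then \cref{thm:hull*} together with \eqref{eq:h**} gives \(\lambda\hull*_{\lambda}{g}=\bigl((\kernel*+\lambda\epicomp)^{**}-\kernel*\bigr)\circ\nabla\kernel\) and its \(\psi\)-analogue \(\lambda\hull*_{\lambda}^\psi G=\bigl((\psi^*+\lambda(\nabla\psi\mathbin{\triangleright} G))^{**}-\psi^*\bigr)\circ\nabla\psi\). By Legendreness and full domain, \(\range\nabla\kernel=\Y*\) and, using \(\range\nabla\psi\supseteq\range\nabla\kernel\), also \(\Y*\subseteq\interior\dom\psi^*=\range\nabla\psi\); hence \(\nabla\kernel(\nabla\kernel*(\xi))=\xi=\nabla\psi(\nabla\psi^*(\xi))\) for \(\xi\in\Y*\). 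Evaluating the two identities above at \(\nabla\kernel*(\xi)\) and \(\nabla\psi^*(\xi)\) respectively turns both sides of the asserted equality into the common value \((\kernel*+\lambda\epicomp)^{**}(\xi)=(\psi^*+\lambda(\nabla\psi\mathbin{\triangleright} G))^{**}(\xi)\), where the last equality is just \((\star)\) passed through the biconjugate.

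The hard part is \((\star)\), or rather the bookkeeping of domains around it: one must verify that \(G\) itself involves no \(\infty-\infty\) (for \(y\) with \(\nabla\psi(y)\notin\range\nabla\kernel\), where \(\kernel*(\nabla\psi(y))\) may be \(+\infty\) while \(\epicomp\) already is), and that neither \(\psi^*+\lambda(\nabla\psi\mathbin{\triangleright} G)\) nor \(\kernel*+\lambda\epicomp\) ever realizes such an indeterminacy. These checks come down precisely to the properness of \(\kernel*\) and \(\psi^*\) together with the two standing inclusions \(\dom\psi\supseteq\dom\kernel\) and \(\range\nabla\psi\supseteq\range\nabla\kernel\); everything past \((\star)\) is purely formal, using only \cref{thm:right*} and \eqref{eq:h**}.
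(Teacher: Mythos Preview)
Your proposal is correct and takes a genuinely different route from the paper's proof. The paper argues directly at the level of the defining minimization problems: it rewrites \(\D(x,y)\) via \eqref{eq:Dpsi} as \((\kernel-\psi)(x)+(\kernel*-\psi^*)(\eta)+\D_\psi(x,\nabla\psi^*(\eta))\) with \(\eta=\nabla\kernel(y)\), adds \(g(y)\), and then carefully tracks the argmin/infimum over \(y\) (splitting the minimization into a fibre over \(\eta\) to make \(\epicomp\) appear and then changing variable \(y'=\nabla\psi^*(\eta)\)). Your approach instead works entirely on the ``dual'' side: you establish the single identity \((\star)\) between \(\kernel*+\lambda\epicomp\) and \(\psi^*+\lambda(\nabla\psi\mathbin{\triangleright} G)\), and then read off all three claims by invoking \cref{thm:right*} twice. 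This is more modular and makes the role of \cref{thm:right*} transparent; the paper's computation is more self-contained but effectively re-derives a pointwise instance of \((\star)\) inside each argmin. One small slip: where you write ``Fr\'echet subdifferential'' you mean the Fenchel (convex) subdifferential \(\fsubdiff\), which is what \cref{thm:prox*conj} uses; the argument is unaffected since the point is only that equal functions have equal subdifferentials.
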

	\begin{proof}~
		\begin{itemize}
		\item ``\ref{thm:chagedgf:prox*} and \ref{thm:chagedgf:env*}''
			We again use \eqref{eq:Dpsi} to express, for any \(x\in\X\), \(y\in\Y\), and with \(\eta=\nabla\kernel(y)\),
			\begin{align*}
				\D(x,y)
			={} &
				(\kernel-\psi)(x)
				+
				(\kernel*-\psi^*)(\eta)
				+
				\D_\psi\bigl(x,\nabla\psi^*(\eta)\bigr)
			\\
			={} &
				(\kernel-\psi)(x)
				+
				(\kernel*-\psi^*)\circ\nabla\psi\bigl(\nabla\psi^*(\eta)\bigr)
				+
				\D_\psi\bigl(x,\nabla\psi^*(\eta)\bigr).
			\end{align*}
			After dividing by \(\lambda\) and adding \(g(y)\), we compute
			{%
				\thickmuskip=.5\thickmuskip
				\medmuskip=.5\medmuskip
				\thinmuskip=.5\thinmuskip
				\begin{align*}
					\nabla\kernel\circ\prox*_{\lambda g}(x)
				={} &
					\nabla\kernel
					\circ
					\argmin_{\mathclap{\substack{y\in\Y}}}\set{
						g(y)
						+
						\tfrac{\kernel*-\psi^*}{\lambda}\circ\nabla\psi\bigl(\nabla\psi^*(\eta)\bigr)
						+
						\tfrac{1}{\lambda}
						\D_\psi(x,\nabla\psi^*(\eta))
					}
				\\
				={} &
					\argmin_{\eta\in\range\nabla\kernel}\biggl\{
						\overbracket[0.5pt]{
							\smash{
								\underbracket[0.5pt]{
									\min_{y\in\Y}\set{g(y)}[\nabla\kernel(y)=\eta]
								}_{\epicomp(\eta)}
							}
							+
							\tfrac{\kernel*-\psi^*}{\lambda}\circ\nabla\psi\bigl(\nabla\psi^*(\eta)\bigr)
						}^{G(\nabla\psi^*(\eta))}
				\\[-0.25cm]
				&
					\hspace*{6cm}
						+
						\tfrac{1}{\lambda}
						\D_\psi(x,\nabla\psi^*(\eta))
					\biggr\}
				\\
				={} &
					\argmin_{\eta\in\range\nabla\kernel}\set{
						G(\nabla\psi^*(\eta))
						+
						\tfrac{1}{\lambda}
						\D_\psi(x,\nabla\psi^*(\eta))
					}
				\\
				={} &
					\argmin_{\eta'\in\interior\dom\psi^*}\set{
						G(\nabla\psi^*(\eta'))
						+
						\tfrac{1}{\lambda}
						\D_\psi(x,\nabla\psi^*(\eta'))
					}
				\\
				={} &
					\nabla\psi
					\circ
					\argmin_{y'\in\interior\dom\psi}\set{
						G(y')
						+
						\tfrac{1}{\lambda}
						\D_\psi(x,y')
					},
				\end{align*}
			}%
			which yields the claimed relation among the proximal mappings.
			Here, the fourth identity uses the fact that \(\range\nabla\kernel\subseteq\range\nabla\psi=\interior\dom\psi^*\), together with the fact that \(\epicomp(\eta')=\infty\) for \(\eta'\in\interior\dom\psi^*\setminus\range\nabla\kernel\).
			A similar reasoning for the infimum proves the claimed relation among the envelopes as well.

		\item ``\ref{thm:chagedgf:hull*}''
			When \(\kernel\) is Legendre and has full domain, the formula from the \(\kernel\)-proximal hull follows from the fact that \(\epicomp=g\circ\nabla\kernel*\) on \(\range\nabla\kernel=\Y*\).
			Indeed, for any \(\eta\in\Y^*\) one has that
			\begin{align*}
				\lambda\hull*_{\lambda}g\circ\nabla\kernel*(\eta)
			={} &
				\bigl[
					(\kernel*+\lambda g\circ\nabla\kernel*)^{**}-\kernel*
				\bigr]
				(\eta)
			\\
			={} &
				\Bigl[
					\bigl(
						\psi^*
						+
						\lambda\bigl(
							g\circ\nabla\kernel*
							+
							\tfrac{\kernel*-\psi^*}{\lambda}
						\bigr)
					\bigr)^{**}
					-
					\psi^*
				\Bigr]
				(\eta)
				-
				(\kernel*-\psi^*)(\eta)
			\\
			={} &
				\bigl[
					(\psi^*+\lambda G\circ\nabla\psi^*)^{**}-\psi^*
				\bigr]
				(\eta)
				-
				(\kernel*-\psi^*)
				(\eta)
			\\
			={} &
				\lambda\hull*^\psi{G}\circ\nabla\psi^*(\eta)
				-
				(\kernel*-\psi^*)(\eta)
			\end{align*}
			where the first and last identities owe to \cref{thm:hull*}, the second one follows by adding and subtracting the (finite) quantity \(\psi^*(\eta)\) twice, and the third one uses the definition of \(G\).
		\qedhere
		\end{itemize}\let\qed\relax
	\end{proof}

	\begin{corollary}[Euclidean form of right Bregman operators]\label{thm:Euclidean*}%
		Given a function \(\func{g}{\Y}{\Rinf}\) and \(\lambda>0\), define \(\func{G}{\R^n}{\Rinf}\) as
		\[
			G
		\coloneqq
			\epicomp
			+
			\tfrac{1}{\lambda}(\kernel*-\j).
		\]
		Then, the following identities hold:
		\begin{enumerate}
		\item
			\(
				\nabla\kernel
				\circ
				\prox*_{\lambda g}(x)
			=
				\Eprox_{\lambda G}(x)
			\)
			for all \(x\in\X\).
		\item
			\(
				\env*{g}(x)
			=
				\bigl[
					\Eenv_{\lambda}G
					+
					\tfrac{\kernel-\j}{\lambda}
				\bigr](x)
			\)
			for all \(x\in\X\).
		\end{enumerate}
		If \(\kernel\) is Legendre and has full domain, then the following also holds:
		\begin{enumerate}[resume]
		\item
			\(
				\hull*_{\lambda}{g}
			=
				\left[
					\Ehull_{\lambda}G
					-
					\tfrac{\kernel*-\j}{\lambda}
				\right]
				\circ\nabla\kernel
			\).
		\end{enumerate}
	\end{corollary}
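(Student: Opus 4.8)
The plan is to obtain \cref{thm:Euclidean*} as the special case $\psi=\j$ of the preceding lemma on change of dgf for right Bregman operators. First I would verify that $\j$ is an admissible reference function there: it is a Legendre dgf --- essentially smooth because $\boundary\dom\j=\emptyset$ makes the gradient blow-up condition vacuous, and essentially strictly convex because $\j$ is strictly convex on all of $\R^n$ --- while the inclusions $\dom\j=\R^n\supseteq\dom\kernel$ and $\range\nabla\j=\R^n\supseteq\range\nabla\kernel$ hold trivially. Since moreover $\j^*=\j$ and $\nabla\j=\nabla\j^*=\mathrm{id}$, the auxiliary function prescribed by the lemma, $\bigl[\epicomp+\tfrac1\lambda(\kernel*-\psi^*)\bigr]\circ\nabla\psi$, collapses exactly to $G=\epicomp+\tfrac1\lambda(\kernel*-\j)$ of the statement, now a function $\R^n=\interior\dom\j\to\Rinf$.

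Next I would record the elementary identifications of the $\j$-Bregman objects with their Euclidean counterparts. Because $\D_\j(x,y)=\tfrac12\norm{x-y}^2$, one has $\prox*^\j_{\lambda G}=\Eprox_{\lambda G}$ and $\env*_{\lambda}^\j G=\Eenv_{\lambda}G$; likewise the left envelope relative to $\j$ satisfies $\env_{\lambda}^\j=\Eenv_{\lambda}$, whence $\hull*_{\lambda}^\j G=-\env_{\lambda}^\j\bigl(-\env*_{\lambda}^\j G\bigr)=-\Eenv_{\lambda}\bigl(-\Eenv_{\lambda}G\bigr)=\Ehull_{\lambda}G$. Substituting these, together with $\nabla\j=\mathrm{id}$ and $\kernel-\psi=\kernel-\j$, into items (i) and (ii) of the lemma yields precisely items (i) and (ii) of \cref{thm:Euclidean*}.

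For item (iii) I would invoke the additional hypothesis that $\kernel$ is Legendre with full domain, so that $\X=\Y=\R^n$ and, by \cref{thm:D*}, $\nabla\kernel$ is a bijection from $\interior\dom\kernel=\R^n$ onto $\Y*=\range\nabla\kernel$ with inverse $\nabla\kernel*$. Plugging $\psi=\j$ into item (iii) of the lemma and using $\j^*=\j$, $\nabla\j^*=\mathrm{id}$, and $\hull*_{\lambda}^\j G=\Ehull_{\lambda}G$ gives $\kernel*+\lambda\,\hull*_{\lambda}{g}\circ\nabla\kernel*=\j+\lambda\,\Ehull_{\lambda}G$ on $\Y*$; rearranging produces $\hull*_{\lambda}{g}\circ\nabla\kernel*=\Ehull_{\lambda}G-\tfrac{\kernel*-\j}{\lambda}$ on $\Y*$, and precomposing with the bijection $\nabla\kernel$ --- whose range is exactly $\Y*$ --- transfers this identity to all of $\R^n=\Y$, which is the claimed formula.

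There is no genuine obstacle here: the result is a straightforward specialization. The only points requiring mild care are the two checks above --- that $\j$ fulfils the Legendre-dgf and domain/range hypotheses of the lemma, and that the $\j$-Bregman operators reduce to their Euclidean versions --- together with the domain bookkeeping in item (iii), where precomposition with $\nabla\kernel$ is legitimate precisely because, under full-domain Legendreness, $\range\nabla\kernel$ coincides with the set $\Y*$ on which the lemma's identity is asserted.
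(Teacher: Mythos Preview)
Your proposal is correct and follows exactly the approach the paper intends: the corollary is stated immediately after the change-of-dgf lemma without proof, as the specialization $\psi=\j$, and you have verified all the necessary ingredients (that $\j$ meets the lemma's hypotheses, that the $\j$-Bregman operators coincide with the Euclidean ones, and the domain bookkeeping for item (iii)).
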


			\subsubsection{Right prox-boundedness}\label{sec:pb*}%
				We are now ready to elaborate on the \emph{right} counterpart of \(\kernel\)-prox-boundedness, following \cite[Def. 3.4]{laude2020bregman}.

\begin{definition}[Right \(\kernel\)-prox-boundedness {\cite[Def. 3.4]{laude2020bregman}}]\label{def:PB*}%
	We say that a function \(\func{g}{\Y}{\Rinf}\) is \emph{right \(\kernel\)-prox-bounded} if there exist \(\lambda>0\) and \(x\in\X\) such that \(\env*{g}(x)>-\infty\).
	The supremum \(\pb*\) of all such \(\lambda\) is the \emph{right \(\kernel\)-prox-boundedness threshold} of \(g\).
\end{definition}

By comparing the identity in \cref{thm:env*conj} with the left counterpart of \cref{thm:envconj}, it may seem natural to identify the right envelope \(\env*{g}\) with the left envelope \(\env^{\kernel*}{(g\circ\nabla\kernel*)}\circ\nabla\kernel\) whenever \(\kernel\) is Legendre.
However, such expression and any conclusion that follows must be made with care, since \(g\circ\nabla\kernel*\) must be a function defined on \(\X*\), as opposed to \(\Y*\), and additional requirements may be needed to ensure lower semicontinuity.
This subtlety was initially overlooked in the original reference \cite{laude2020bregman}, leading to imprecise conclusions, as illustrated in \cref{ex:env*}.
The issue was later addressed and corrected in the doctoral thesis \cite{laude2021lower}, where sufficient conditions on \(\kernel\) and \(g\) were identified to ensure the required lower semicontinuity.
Specifically, assuming \(\dom\kernel = \R^n\) (the necessary right-hand analogue of 1-coercivity), Legendreness of \(\kernel\) and either coercivity of \(g\) or 1-coercivity of \(\kernel\) were shown to suffice.

In this work, we confirm and generalize those findings by recasting them in terms of the lower semicontinuity of the epi-composition \(\epicomp\).
In fact, our framework based on restricted domains allows us to relax the assumptions: it suffices for \(\epicomp\) to be lower semicontinuous merely on \(\X*\), not necessarily on the whole \(\R^n\).
Under Legendreness, this condition is automatically satisfied when \(\kernel*\) has open domain (and \(g\) is lsc).
Beyond this, we further extend the analysis to cover cases where Legendreness is not assumed.

\begin{theorem}\label{thm:mainprop*}%
	Suppose that \(\dom\kernel=\R^n\) (i.e., \(\X=\Y=\R^n\)), and let \(\func{g}{\Y}{\Rinf}\) be proper and right \(\kernel\)-prox-bounded with threshold \(\pb*\).
	Then, for every \(\lambda\in(0,\pb*)\), and with \(\epicomp\) as in \eqref{eq:epicomp}, the following hold:
	\begin{enumerate}
	\item \label{thm:mainprop*:C0}%
		\(\func{\env*{g}}{\X}{\R}\) is locally Lipschitz continuous.
	\item \label{thm:mainprop*:pb*}%
		If \(\kernel\) is Legendre, then \(\epicomp\restr_{\X*}\) is left \(\kernel*\)-prox-bounded with same threshold \(\pb_{\epicomp}^{\kernel*}=\pb*\).
	\item \label{thm:mainprop*:epicomp}%
		\(\lambda\epicomp+\kernel*\) is 1-coercive.
		It is also lsc (on \(\R^n\)) provided that \(\epicomp\restr_{\X*}\) is lsc.
		For \(\epicomp\restr_{\X*}\) to be lsc it is sufficient that \(g\) be lsc and that one of the following conditions is satisfied:
		\begin{itemize}[nosep]
		\item
			either \(\kernel\) is Legendre and \(\X*\) is open,
		\item
			or \(g\) is coercive,
		\item
			or \(\kernel\) is 1-coercive.
		\end{itemize}
	\item \label{thm:mainprop*:osc}%
		\(\ffunc{\nabla\kernel\circ\prox*_{\lambda g}}{\X}{\R^n}\) is locally bounded.
		If, additionally, \(\epicomp\restr_{\X*}\) is lsc, then \(\nabla\kernel\circ\prox*_{\lambda g}\) is also osc, usc, and nonempty- and compact-valued.
	\end{enumerate}
\end{theorem}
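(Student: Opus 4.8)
The plan is to reduce all four assertions to properties of the single function $h\coloneqq\kernel*+\lambda\epicomp$, exploiting the conjugate representations in \cref{thm:right*}. The first step extracts from right prox-boundedness an affine minorant: since $\lambda<\pb*$ there is $\lambda_1\in(\lambda,\pb*)$, hence (by definition of $\pb*$) some $x_1$ with $\env*_{\lambda_1}{g}(x_1)>-\infty$, and \eqref{eq:env*:epicomp} then forces $\inf_{\xi}\set{\lambda_1\epicomp(\xi)+\kernel*(\xi)-\innprod{\xi}{x_1}}>-\infty$, i.e.\ $\lambda_1\epicomp+\kernel*$ is minorized by an affine function. In particular $\epicomp$ is proper: $\epicomp\not\equiv\infty$ because $g$ is proper, and $\epicomp>-\infty$ because $\lambda_1\epicomp+\kernel*$ has an affine minorant and $\kernel*$ is finite on $\range\nabla\kernel\supseteq\dom\epicomp$ (while $\epicomp\equiv\infty$ off $\range\nabla\kernel$). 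Since $\dom\kernel=\R^n$ makes $\kernel*$ $1$-coercive, the convex combination $\lambda\epicomp+\kernel*=\tfrac{\lambda}{\lambda_1}(\lambda_1\epicomp+\kernel*)+(1-\tfrac{\lambda}{\lambda_1})\kernel*$ is $1$-coercive: this settles the first sentence of \cref{thm:mainprop*:epicomp} and also shows that $h$ is proper and $1$-coercive, so that $h^*$ is finite-valued on $\R^n$.

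From here, \cref{thm:mainprop*:C0,thm:mainprop*:osc} are short. By \cref{thm:env*conj} with $\X=\R^n$, $\lambda\env*{g}=\kernel-h^*$ is a difference of two finite-valued convex functions on $\R^n$, hence real-valued and locally Lipschitz. For \cref{thm:mainprop*:osc}, \cref{thm:prox*conj} and \eqref{eq:Fermat} give $\nabla\kernel\circ\prox*_{\lambda g}(x)=\argmin_{\xi}\set{h(\xi)-\innprod{x}{\xi}}$; fixing $\xi_0\in\dom h$, every $\xi$ in this set satisfies $h(\xi)\le h(\xi_0)+\innprod{x}{\xi-\xi_0}$, and $1$-coercivity of $h$ turns this into a bound on $\xi$ that is uniform as $x$ ranges over a bounded set --- which is local boundedness (the bound being vacuous where the argmin is empty). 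If in addition $h$ is lsc, then $h-\innprod{x}{{}\cdot{}}$ is proper, lsc, and level bounded (being $1$-coercive), so its $\argmin$ is nonempty and compact.

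The crux is the rest of \cref{thm:mainprop*:epicomp}. For the transfer ``$\epicomp\restr_{\X*}$ lsc $\Rightarrow$ $h$ lsc on $\R^n$'' I would argue as in the proof of \cref{thm:ext}: any sequence $\xi^k\to\bar\xi$ with $h(\xi^k)\le\alpha$ must lie in $\dom\kernel*=\X*$, for otherwise $h(\xi^k)=\infty$; if $\bar\xi\in\X*$ the bound passes to the limit because $h\restr_{\X*}=\epicomp\restr_{\X*}+\kernel*\restr_{\X*}$ is lsc on $\X*$, while if $\bar\xi\notin\X*$ then, using $h\ge\tfrac{\lambda}{\lambda_1}(\text{affine})+(1-\tfrac{\lambda}{\lambda_1})\kernel*$ with $1-\tfrac{\lambda}{\lambda_1}>0$, one finds $\kernel*(\xi^k)$ bounded, contradicting lower semicontinuity of $\kernel*$ at $\bar\xi$; so that case cannot occur. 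For the three sufficient conditions (with $g$ lsc throughout): if $\kernel$ is Legendre and $\X*$ is open then $\X*=\Y*=\range\nabla\kernel$ by \cref{thm:D*}, so $\epicomp\restr_{\X*}=g\circ\nabla\kernel*$ is lsc as a composition of an lsc function with the continuous $\nabla\kernel*$; and if instead $g$ is coercive or $\kernel$ is $1$-coercive, I would run a compactness argument: given $\xi^k\to\bar\xi$, choose near-minimizers $y^k$ in $\epicomp(\xi^k)=\inf\set{g(y)}[\nabla\kernel(y)=\xi^k]$, which form a bounded sequence either because $g$ is level bounded (so the $g(y^k)$ are bounded above) or because $1$-coercivity of $\kernel$ makes $\nabla\kernel$ proper (preimages of bounded sets are bounded); passing to $y^k\to\bar y$, continuity of $\nabla\kernel$ gives $\nabla\kernel(\bar y)=\bar\xi$ and lower semicontinuity of $g$ gives $g(\bar y)\le\liminf_k g(y^k)$, whence $\epicomp(\bar\xi)\le\liminf_k\epicomp(\xi^k)$.

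Finally, \cref{thm:mainprop*:pb*}: assume $\kernel$ Legendre, so $\kernel*$ is a Legendre dgf and $\func{\nabla\kernel*}{\Y*}{\R^n}$ is a bijection (\cref{thm:D*}). The canonical extension of $\epicomp\restr_{\X*}$ is $\epicomp$ itself (as $\epicomp\equiv\infty$ off $\range\nabla\kernel\subseteq\X*$), so applying \cref{thm:envconj} with $\kernel*$ as distance-generating function yields $\lambda\env^{\kernel*}_{\lambda}(\epicomp\restr_{\X*})=[\kernel-h^*]\circ\nabla\kernel*$ on $\Y*$, which by \cref{thm:env*conj} equals $(\lambda\env*{g})\circ\nabla\kernel*$. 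Since $\kernel$ is finite everywhere and $\nabla\kernel*$ maps $\Y*$ onto all of $\R^n$, ``there is $\eta\in\Y*$ with $\env^{\kernel*}_{\lambda}(\epicomp\restr_{\X*})(\eta)>-\infty$'' is equivalent to $\dom h^*\neq\emptyset$, which is in turn equivalent to ``there is $x\in\R^n$ with $\env*{g}(x)>-\infty$''; since this holds with $\lambda$ replaced by any positive parameter, the left $\kernel*$-prox-boundedness threshold of $\epicomp\restr_{\X*}$ and the right $\kernel$-prox-boundedness threshold of $g$ coincide, i.e.\ $\pb_{\epicomp}^{\kernel*}=\pb*$. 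I expect the main obstacle to be the lower-semicontinuity analysis in \cref{thm:mainprop*:epicomp}: pinning down lower semicontinuity of $\epicomp$ on $\X*$ --- and no more --- rather than on $\R^n$ (which is exactly where the earlier references were imprecise) and then transferring it to $h$.
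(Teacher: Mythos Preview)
Your proposal is correct and follows essentially the same route as the paper, with only cosmetic differences in organization. The paper proves (i) first by a direct lower bound on \(\env*{g}\) and then deduces 1-coercivity of \(h\) from real-valuedness of \(h^*\); you reverse this, obtaining 1-coercivity of \(h\) directly via the convex-combination identity \(\lambda\epicomp+\kernel*=\tfrac{\lambda}{\lambda_1}(\lambda_1\epicomp+\kernel*)+(1-\tfrac{\lambda}{\lambda_1})\kernel*\) and then reading off (i) from the conjugate formula --- both are valid and equally short. For (iii) and (iv) you spell out the compactness and argmin-bound arguments that the paper delegates to \cite[Prop.~1.32 and Thm.~1.17]{rockafellar1998variational}; your direct treatment of local boundedness in (iv) is in fact a touch more explicit than the paper's, which phrases both conclusions as consequences of the cited theorem. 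For (ii) you invoke \cref{thm:envconj} with \(\kernel*\) as dgf, whereas the paper reads the identity off \eqref{eq:env*:epicomp} directly; these are the same computation.
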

\begin{proof}~
	\begin{itemize}
	\item ``\ref{thm:mainprop*:C0}''
		For a fixed \(\lambda\in(0,\pb*)\), let \(\lambda_0\in(\lambda,\pb*)\) and \(x^0\in\X\) be such that \(\env*_{\lambda_0}{g}(x^0)>-\infty\).
		By properness of \(g\), necessarily \(\env*_{\lambda_0}{g}(x^0)\in\R\).
		For every \(x\in\X\) we have
		\begin{align*}
		&
			\env*{g}(x)
		\\
		={} &
			\inf_{y\in\Y}\Bigl\{
				g(y)
				+
				\tfrac{1}{\lambda_0}\D(x^0,y)
				+
				\left(\tfrac{1}{\lambda}-\tfrac{1}{\lambda_0}\right)
				\D(x,y)
		\\
		&
		\qquad\quad
				+
				\tfrac{1}{\lambda_0}
				\Bigl(\kernel(x)-\kernel(x^0)-\innprod{\nabla\kernel(y)}{x-x^0}\Bigr)
			\Bigr\}
		\\
		\geq{} &
			\env*_{\lambda_0}{g}(x^0)
			+
			\tfrac{\kernel(x)-\kernel(x^0)}{\lambda_0}
			+
			\inf_{y\in\Y}\set{
				\tfrac{\lambda_0-\lambda}{\lambda_0\lambda}
				\D(x,y)
				-
				\tfrac{1}{\lambda_0}
				\innprod{\nabla\kernel(y)}{x-x^0}
			}
		\\
		={} &
			\env*_{\lambda_0}{g}(x^0)
			+
			\tfrac{\kernel(x)-\kernel(x^0)}{\lambda_0}
		\\
		&
			+
			\inf_{y\in\Y}\set{
				\tfrac{\lambda_0-\lambda}{\lambda_0\lambda}
				\left[\kernel(x)+\kernel*(\nabla\kernel(y))-\innprod{\nabla\kernel(y)}{x}\right]
				-
				\tfrac{1}{\lambda_0}
				\innprod{\nabla\kernel(y)}{x-x^0}
			}
		\\
		={} &
			\env*_{\lambda_0}{g}(x^0)
			+
			\tfrac{1}{\lambda}\kernel(x)
			-
			\tfrac{1}{\lambda_0}\kernel(x^0)
			+
			\tfrac{\lambda_0-\lambda}{\lambda_0\lambda}
			\inf_{\xi\in\range\nabla\kernel}\set{
				\kernel*(\xi)
				-
				\innprod{\xi}{
					\tfrac{\lambda_0x-\lambda x^0}{\lambda_0-\lambda}
				}
			}.
		\end{align*}
		Real valuedness of \(\kernel\) implies that \(\kernel*\) is 1-coercive, and as a consequence the infimum above, which is lower bounded by the infimum over \(\R^n\), is finite.
		From the arbitrariness of \(x\in\X\) we conclude that \(\env*{g}>-\infty\), which combined with properness of \(g\) yields that \(\env*{g}\) is everywhere real valued.

		In turn (recall that \(\X=\R^n\) by assumption), the expression in \cref{thm:env*conj} implies that \(\env*{g}=\kernel-(\kernel*+\lambda\epicomp)^*\) is the difference of two convex and real-valued functions, which are thus both locally Lipschitz on \(\R^n\).
		As a result, \(\env*{g}\) itself must be locally Lipschitz continuous.

	\item
		``\ref{thm:mainprop*:pb*}''
		The assumption of Legendreness ensures that \(\kernel*\) qualifies as a dgf, since it is differentiable on \(\interior\dom\kernel*\).
		The claim then follows from \eqref{eq:env*:epicomp}, by observing that the minimization there can be restricted over \(\xi\in\X*\)---in fact, even over \(\xi\in\interior\X*\)---for the function being minimized is \(\infty\) elsewhere.

	\item
		``\ref{thm:mainprop*:epicomp}''
		As shown in the proof of assertion \ref{thm:mainprop*:C0}, \((\kernel*+\lambda\epicomp)^*\) is everywhere real-valued, and thus \(\kernel*+\lambda\epicomp\) is 1-coercive.
		If \(\epicomp\restr_{\X*}\) is lsc, we can show lower semicontinuity of \(\lambda\epicomp+\kernel*\) on \(\R^n\) for \(\lambda<\pb*\) by patterning the arguments in the proof of \cref{thm:ext:lsc}.
		Indeed, take \(\lambda_0\in(\lambda,\pb*)\) and \(x^0\in\X\) such that \(\env*{g}(x^0)>-\infty\).
		We will show lower semicontinuity of
		\begin{align*}
			\psi_\lambda(\xi)
		\coloneqq{} &
			\epicomp(\xi)
			+
			\tfrac{1}{\lambda}
			\left(
				\kernel*(\xi)
				-
				\innprod{\xi}{x^0}
			\right)
		\\
		\geq{} &
			\env*_{\lambda_0}(x^0)
			+
			\left(
				\tfrac{1}{\lambda}
				-
				\tfrac{1}{\lambda_0}
			\right)
			\left(
				\kernel*(\xi)
				-
				\innprod{\xi}{x^0}
			\right),
		\end{align*}
		where the inequality follows from \eqref{eq:env*:epicomp}.
		Let \(\alpha\in\R\) and \(\seq{\xi^k}\) be a sequence converging to a point \(\bar\xi\) and such that \(\psi_{\lambda}(\xi^k)\leq\alpha\) for all \(k\).
		To arrive to a contradiction, suppose that \(\psi_{\lambda}(\bar\xi)>\alpha\).
		Since \(\psi_{\lambda}\) is lsc on \(\X*\) by assumption, one must have that \(\bar\xi\notin\X*\), that is, \(\kernel*(\bar\xi)=\infty\).
		On the other hand, for every \(k\) it follows from the inequality in the last display that
		\[
			\alpha
		\geq
			\env*_{\lambda_0}(x^0)
			+
			\left(
				\tfrac{1}{\lambda}
				-
				\tfrac{1}{\lambda_0}
			\right)
			\left(
				\kernel*(\xi^k)
				-
				\innprod{\xi^k}{x^0}
			\right)
		\to
			\infty
		\quad
			\text{as }k\to\infty
		\]
		by lower semicontinuity of \(\kernel*\), a contradiction.
		Thus, \(\lambda\epicomp+\kernel*=\lambda\psi_\lambda+\innprod{\xi}{x^0}\) is lsc on \(\R^n\).

		We next assume that \(g\) is lsc, and prove that each of the conditions listed in the statement suffices for lower semicontinuity of \(\epicomp\restr_{\X*}\).

		If \(\kernel\) is Legendre and \(\X*\) is open, the claim is obvious from continuity of \(\nabla\kernel\) and the fact that \(\range\nabla\kernel=\interior\X*\).

		As for the other two conditions, note that when \(\kernel\) is 1-coercive, \(\kernel*\) is finite-valued and its subdifferential is thus locally bounded.
		As such, when either \(g\) is level bounded or \(\kernel\) is 1-coercive, the set
		\(
			\set{y\in\Y}[g(y)\leq\alpha,~\norm{\nabla\kernel(y)-u}\leq\varepsilon]
		\)
		is bounded for any \(\alpha\in\R\), \(u\in\R^n\), and \(\varepsilon>0\), which by \cite[Prop. 1.32]{rockafellar1998variational} yields the claimed lower semicontinuity of \(\epicomp\) on \(\R^n\).

	\item
		``\ref{thm:mainprop*:osc}''
		By combining the expression in \cref{thm:prox*conj} with \eqref{eq:Fermat} we obtain that
		\[
			\nabla\kernel\circ\prox*_{\lambda g}(x)
		=
			\argmin_{\xi\in\R^n}\set{\lambda\epicomp(\xi)+\kernel*(\xi)-\innprod{\xi}{x}}.
		\]
		Note that 1-coercivity of \(\lambda\epicomp+\kernel*\) shown in assertion \ref{thm:mainprop*:epicomp} implies that the function being minimized is level bounded in \(\xi\) locally uniformly in \(x\), in the sense of \cite[Def. 1.16]{rockafellar1998variational}.
		When \(\epicomp\restr_{\X*}\) is lsc, that function is lsc on the whole space \(\R^n\times\R^n\) as it follows from assertion \ref{thm:mainprop*:epicomp}, and the claimed local boundedness and nonempty- and compact-valuedness follow from \cite[Thm. 1.17]{rockafellar1998variational}.
		Moreover, for any \(\xi\) notice that the section
		\(
			x
		\mapsto
			\lambda\epicomp(\xi)+\kernel*(\xi)-\innprod{\xi}{x}
		\)
		is continuous, which appealing again to \cite[Thm. 1.17]{rockafellar1998variational} yields the claimed outer semicontinuity of the corresponding optimal-set mapping \(\nabla\kernel\circ\prox*_{\lambda g}(x)\).
		In turn, upper semicontinuity follows from \cref{thm:oscusc:Yclosed}.
	\qedhere
	\end{itemize}\let\qed\relax
\end{proof}

\begin{example}[importance of lower semicontinuity of \(\epicomp\)]\label{ex:env*}%
	Real valuedness and Legendreness of \(\kernel\) are not enough to guarantee nonemptiness of the right proximal mapping for any \(\lambda\in(0,\pb*)\), as imprecisely stated in \cite[Lem. 3.5]{laude2020bregman}.
	To see this, with \(\X=\Y=\R\) let
	\[
		\kernel(x)
	=
		\begin{cases}
			x^2-2x+3 & \text{if }x<1
		\\
			x+\frac{1}{x} & \text{if }x\geq1,
		\end{cases}
	\]
	and \(\func{g}{\Y}{\Rinf}\) be given by \(g(y)=\frac{1}{y}\) for \(y>0\) and \(\infty\) otherwise.
	Clearly, both \(\kernel\) and \(g\) are convex and Legendre, \(\kernel\) is coercive (but \emph{not 1-}coercive) and has also full domain.
	Moreover, \(\env*{g}>0\) for any \(\lambda>0\), and in particular \(\pb*=\infty\).
	For \(\bar x=1\) and \(y\geq1\), one has that
	\[
		g(y)
		+
		\tfrac{1}{\lambda}
		\D(\bar x,y)
	=
		\tfrac{(y-1)^2}{\lambda y^2}
		+
		\tfrac{1}{y},
	\]
	and it can easily be verified by computing the derivative that this function is strictly decreasing in \(y\) whenever \(\lambda\geq2\).
	The same is true also for \(y<1\), since \(\D(\bar x,y)=\norm{\bar x-y}^2\) in this case, altogether implying that the set of minimizers \(\prox*_{\lambda g}(\bar x)\) is empty for all \(\lambda\geq2\).
	Not surprisingly, the epi-composition
	\[
		\epicomp(\xi)
	=
		\begin{cases}
			\frac{2}{\xi+2} & \text{if }\, -2<\xi<0\\
			\sqrt{1-\xi} & \text{if }\, 0\leq\xi<1\\
			\infty & \text{if }\, \xi\leq-2 \vee \xi\geq1
		\end{cases}
	\]
	is not lower semicontinuous (at \(\xi=1\in\dom\kernel*\)), and indeed the sufficient conditions of \cref{thm:mainprop*:epicomp} are not satisfied: neither is \(g\) coercive, nor is \(\kernel\) 1-coercive, nor is \(\dom\kernel*=(-\infty,1]\) open.
\end{example}

In conclusion, leveraging on \cref{thm:mainprop*:epicomp} we may provide a refined version of \cref{thm:hull} under right \(\kernel\)-prox-boundedness.

\begin{lemma}\label{thm:hull*+}%
	Suppose that \(\dom\kernel=\R^n\) (i.e., \(\X=\Y=\R^n\)), and let \(\func{g}{\Y}{\Rinf}\) be proper, lsc, and right \(\kernel\)-prox-bounded.
	Then, for any \(\lambda\in(0,\pb*)\) it holds that
	\[
		\begin{array}[t]{@{}r@{}l@{}}
			\lambda\hull*_{\lambda}{g}
		={} &
			\bigl(
				\conv*(\kernel*+\lambda\epicomp)
				-
				\kernel*
			\bigr)\circ\nabla\kernel
		\\
		={} &
			\bigl(
				\conv(\kernel*+\lambda\epicomp)
				-
				\kernel*
			\bigr)\circ\nabla\kernel
		\\
		\leq{} &
			\lambda g.
		\end{array}
	\]
\end{lemma}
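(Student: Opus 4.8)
The plan is to read off this refinement directly from \cref{thm:hull*}, whose conclusion already contains almost everything. For an arbitrary \(\func{g}{\Y}{\Rinf}\), that lemma gives
\begin{align*}
	\lambda\hull*_{\lambda}{g}
={} &
	\bigl(\bigl[(\kernel*+\lambda\epicomp)^*+\indicator_{\X}\bigr]^*-\kernel*\bigr)\circ\nabla\kernel
\\
\leq{} &
	\bigl(\conv*(\kernel*+\lambda\epicomp)-\kernel*\bigr)\circ\nabla\kernel
	\leq
	\bigl(\conv(\kernel*+\lambda\epicomp)-\kernel*\bigr)\circ\nabla\kernel
	\leq
	\lambda g,
\end{align*}
and under the standing hypothesis \(\dom\kernel=\R^n\) (so \(\X=\R^n\) and \(\indicator_{\X}\equiv0\)) the first inequality is in fact an equality. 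Thus the only thing left to prove is that, under the extra assumptions in force here, the first inequality of the \(\conv*\)/\(\conv\) pair is also an equality, i.e.\ \(\conv*(\kernel*+\lambda\epicomp)=\conv(\kernel*+\lambda\epicomp)\).

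To get this I would invoke \cref{thm:mainprop*}, all of whose hypotheses hold verbatim (\(\dom\kernel=\R^n\), \(g\) proper and right \(\kernel\)-prox-bounded, \(\lambda\in(0,\pb*)\)): \cref{thm:mainprop*:epicomp} tells us that \(\lambda\epicomp+\kernel*\) is \(1\)-coercive. I would then observe that \(\kernel*+\lambda\epicomp\) is proper: it is nowhere equal to \(-\infty\) because its conjugate \((\kernel*+\lambda\epicomp)^*\) is everywhere real-valued---this is precisely what is established inside the proof of \cref{thm:mainprop*:C0}, where \(\lambda\env*{g}=\kernel-(\kernel*+\lambda\epicomp)^*\) is shown to be finite on all of \(\R^n\)---and it is not identically \(+\infty\) since, \(g\) being proper, there is \(y_0\in\Y\) with \((\kernel*+\lambda\epicomp)(\nabla\kernel(y_0))\leq\kernel*(\nabla\kernel(y_0))+\lambda g(y_0)<\infty\).

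Since \(\kernel*+\lambda\epicomp\) is thus a proper, \(1\)-coercive function already defined on \(\R^n\), it equals its own canonical extension, and \cref{thm:cvxext} applies to yield \(\conv*(\kernel*+\lambda\epicomp)=\conv(\kernel*+\lambda\epicomp)\). Plugging this equality into the chain displayed above collapses the two hull expressions, leaving exactly the two asserted equalities together with the bound \(\leq\lambda g\); this completes the proof. There is no real obstacle here: the argument is a short chaining of \cref{thm:hull*}, \cref{thm:mainprop*:epicomp} and \cref{thm:cvxext}, the only point requiring a line of care being the verification of properness needed to apply \cref{thm:cvxext}, which is supplied by the real-valuedness of \(\env*{g}\) recalled above. (Incidentally, lower semicontinuity of \(g\) is not used in this particular argument; it matters only when one additionally wants \(\epicomp\restr_{\X*}\), and hence the objects derived from it, to be lsc.)
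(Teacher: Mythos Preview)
Your proposal is correct and follows essentially the same route as the paper: invoke \cref{thm:hull*} (noting that \(\X=\R^n\) turns the first inequality into an equality), then use 1-coercivity from \cref{thm:mainprop*:epicomp} together with \cref{thm:cvxext} to collapse \(\conv*\) and \(\conv\). Your extra line verifying properness of \(\kernel*+\lambda\epicomp\) before applying \cref{thm:cvxext} is a welcome bit of care that the paper leaves implicit, and your side remark that lower semicontinuity of \(g\) is not actually used here is accurate.
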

\begin{proof}
	The first equality and the inequality were shown in \cref{thm:hull*}.
	The second equality follows from \cref{thm:cvxext}, since \(\kernel*+\lambda\epicomp\) is 1-coercive by \cref{thm:mainprop*:epicomp}.
\end{proof}

	\section{A \texorpdfstring{\(\Phikernel\)}{Phi}-conjugacy perspective}\label{sec:Phiconj}
		Akin to the analysis in \cite{laude2023dualities}, we frame the objects under investigation in this paper under the lens of generalized \(\Phikernel\)-conjugacy; see for example \cite[Sec. 2.6.2]{figalli2023invitation}, \cite[Sec. 7.2.4]{rubinov2013abstract}, and \cite[Chap. 11]{rockafellar1998variational}.
This perspective will corroborate our claims about \(\X\) and \(\Y\) being their ``natural'' (co)domains.
Beyond this, we believe that this theory is extraordinarily appealing in its ability to coincisely explain many key relations.
We repropose some known preliminary results, of which we detail the simple proofs for the sake of keeping this section self-contained.
For further details and developments we refer the interested reader to the more comprehensive work \cite{laude2023dualities} by Laude et al., and to \cite[Sec. 7]{rubinov2013abstract} for a more general overview on \(\Phikernel\)-convexity.

		\subsection{Main definitions and Fenchel \texorpdfstring{\(\Phikernel\)}{Phi}-duality}%
			In this subsection, let \(X\) and \(Y\) be \emph{any} nonempty sets, and let \(\func{\Phikernel}{X\times Y}{\R}\) be \emph{any} real-valued function, which we shall refer to as a \emph{coupling} between \(X\) and \(Y\).
Correspondingly, let \(\func{\Phikernel*}{Y\times X}{\R}\) be defined as
\begin{equation}
	\Phikernel*(y,x)=\Phikernel(x,y)
\qquad
	\forall (y,x)\in Y\times X.
\end{equation}
As we will see, the Bregman objects under investigation will be obtained with the specific choice of \(\Phikernel=-\frac{1}{\lambda}\D\).
Before specializing the results to this choice, we provide the general picture holding for arbitrary sets and coupling functions.
This degree of generality will ultimately be useful in \cref{thm:Bcoco} to obtain a new characterization of \emph{relative smoothness}.

We begin with the definition of \(\Phikernel\)-conjugate.

\begin{definition}[\(\Phikernel\)-conjugate]\label{def:conj}%
	The \emph{\(\Phikernel\)-conjugate} of \(\func{f}{X}{\Rinf}\) is \(\func{\conj{f}}{Y}{\Rinf}\) defined by
	\begin{equation}\label{eq:conj}
		\conj{f}(y)\coloneqq\sup_{x\in X}\set{\Phikernel(x,y)-f(x)}.
	\end{equation}
\end{definition}

Since the marginalization is performed with respect to the first argument \(x\in X\), \(\conj f\) can be thought of as a \emph{``left''} \(\Phikernel\)-conjugate.
A \emph{``right''} counterpart can similarly be done for functions \(\func{g}{Y}{\Rinf}\); this operation can still be represented as a ``left'' conjugation as in \cref{def:conj} by using \(\Phikernel*\) as coupling, as it corresponds to \(\Phikernel\) with ``flipped'' arguments.
Hence, the \emph{\(\Phikernel*\)-conjugate} (or \emph{``right'' \(\Phikernel\)-conjugate}) of \(\func{g}{Y}{\Rinf}\) is \(\func{\conj*{g}}{X}{\Rinf}\) defined by
\[
	\conj*{g}(x)
=
	\sup_{y\in Y}\set{\Phikernel(x,y)-g(y)},
\]
namely with marginalization with respect to the \emph{second} argument of \(\Phikernel\).
When \(\Phikernel\) is symmetric, in the sense that (\(X=Y\) and) \(\Phikernel(x,y)=\Phikernel(y,x)\), one has that \(\Phikernel*=\Phikernel\) and this distinction is thus unnecessary, and similarly the introduction of a dedicated notation for the ``flipped'' function \(\Phikernel*\) becomes superfluous.\footnote{%
	In fact, even in absence of symmetry it is a common practice in the literature to use a same superscript, say, \(\Phikernel\), both for left and right conjugacy operations, leaving the distinction based on the context.
	We however prefer to stick to a more rigorous and unambiguous notation that explicitly clarifies this distinction, akin to the conventions adopted in, e.g., \cite{martinezlegaz2005generalized,cabot2017envelopes}.
}
This is the case of the usual Fenchel conjugacy operation, which is recovered for \(X=Y=\R^n\) with \(\Phikernel=\Phikernel*=\func{*}{X\times Y}{\R}\) denoting the inner product \(*(x,y)=\innprod{x}{y}\).

Other objects such as the biconjugate and subdifferentials, defined next, are similarly recovered with this specific choice of coupling function.

\begin{definition}[\(\Phikernel\)-biconjugate]%
	The \emph{\(\Phikernel\)-biconjugate} of \(\func{f}{X}{\Rinf}\) is \(\biconj{f}\coloneqq\conj*{(\conj{f})}\), namely \(\func{\biconj{f}}{X}{\Rinf}\) defined by
	\[
		\biconj{f}(x)\coloneqq\sup_{y\in Y}\set{\Phikernel(x,y)-\conj{f}(y)}.
	\]
\end{definition}

\begin{definition}[\(\Phikernel\)-subgradients]\label{def:subdiff}%
	We say that \(\bar y\in Y\) is a \emph{\(\Phikernel\)-subgradient} of \(\func{f}{X}{\Rinf}\) at \(\bar x\in\dom f\), denoted by \(\bar y\in\subdiff f(\bar x)\), if
	\begin{gather*}
		f(x)\geq f(\bar x)+\Phikernel(x,\bar y)-\Phikernel(\bar x,\bar y)
	\quad
		\forall x\in X
	\shortintertext{or, equivalently, if}
	\numberthis\label{eq:subdiff}
		\bar x
	\in
		\argmax_{x\in X}\set{\Phikernel(x,\bar y)-f(x)}
	=
		\argmin_{x\in X}\set{f(x)-\Phikernel(x,\bar y)}.
	\end{gather*}
\end{definition}

As in the case of the \(\Phikernel*\)-conjugate, also \(\Phikernel*\)-biconjugate and \(\Phikernel*\)-subgradients of functions \(Y\to\Rinf\) can be expressed as ``right'' equivalents of the \(\Phikernel\)-counterparts.
That is, the \emph{\(\Phikernel*\)-biconjugate} (or \emph{``right'' \(\Phikernel\)-biconjugate}) of \(\func{g}{Y}{\Rinf}\) is \(\biconj*{g}\coloneqq\conj{(\conj*{g})}\), namely \(\func{\biconj*{g}}{Y}{\Rinf}\) defined by
\[
	\biconj*{g}(y)\coloneqq\sup_{x\in X}\set{\Phikernel(x,y)-\conj*{g}(x)},
\]
whereas \(\bar x\in X\) is a \emph{\(\Phikernel*\)-subgradient} (or \emph{``right'' \(\Phikernel\)-subgradient}) of \(g\) at \(\bar y\in\dom g\), denoted by \(\bar x\in\subdiff* g(\bar y)\), if
\begin{gather*}
	g(y)\geq g(\bar y)+\Phikernel(\bar x,y)-\Phikernel(\bar x,\bar y)
\quad
	\forall y\in Y
\shortintertext{or, equivalently, if}
	\bar y
\in
	\argmax_{y\in Y}\set{\Phikernel(\bar x,y)-g(y)}
=
	\argmin_{y\in Y}\set{g(y)-\Phikernel(\bar x,y)}.
\end{gather*}

\begin{fact}[{\cite[Lem. 2.4]{laude2023dualities}}]%
	The following hold for any proper \(\func{f}{X}{\Rinf}\):
	\begin{enumerate}
	\item \label{thm:FY:ineq}%
		\(f(x)+\conj{f}(y)\geq\Phikernel(x,y)\) for any \((x,y)\in X\times Y\).
	\item \label{thm:FY:biconj<=}%
		\(\biconj f\leq f\).
	\item \label{thm:FY:triconj}%
		\(\conj{f}=\conj{\biconj f}>-\infty\).
	\end{enumerate}
\end{fact}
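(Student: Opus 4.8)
The plan is to obtain all three assertions directly from the definition of the $\Phikernel$-conjugate, using only that a supremum dominates each of its terms together with the order-reversing nature of conjugation; properness of $f$ enters solely to exclude indeterminate $\infty-\infty$ expressions. I would begin with the Fenchel--Young-type inequality \ref{thm:FY:ineq}: fixing $(x,y)\in X\times Y$, the definition gives $\conj f(y)=\sup_{x'\in X}\set{\Phikernel(x',y)-f(x')}\geq\Phikernel(x,y)-f(x)$, and rearranging yields $f(x)+\conj f(y)\geq\Phikernel(x,y)$ whenever $f(x)\in\R$. When $f(x)=\infty$ the inequality is trivial once we know $\conj f(y)>-\infty$, and this is exactly where properness is used: choosing any $x_0\in\dom f$ gives $\conj f(y)\geq\Phikernel(x_0,y)-f(x_0)>-\infty$. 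I would record the bound $\conj f>-\infty$ at this stage, since it is already half of \ref{thm:FY:triconj}.

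For \ref{thm:FY:biconj<=} I would feed \ref{thm:FY:ineq} into the definition of the biconjugate: from $f(x)+\conj f(y)\geq\Phikernel(x,y)$ one gets $\Phikernel(x,y)-\conj f(y)\leq f(x)$ for every $y\in Y$ --- a legitimate rearrangement since $\conj f(y)>-\infty$ --- and taking the supremum over $y\in Y$ gives $\biconj f(x)\leq f(x)$.

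It then remains to prove the identity $\conj f=\conj{\biconj f}$ in \ref{thm:FY:triconj}, the bound $\conj f>-\infty$ being in hand. The inequality ``$\geq$'' follows at once from \ref{thm:FY:biconj<=}: since $\biconj f\leq f$ one has $\Phikernel(x,y)-\biconj f(x)\geq\Phikernel(x,y)-f(x)$ for every $x\in X$, and taking suprema over $x$ gives $\conj{\biconj f}\geq\conj f$. For ``$\leq$'' I would argue straight from the definition of $\biconj f$: for arbitrary $x\in X$ and $y\in Y$ one has $\biconj f(x)\geq\Phikernel(x,y)-\conj f(y)$, hence $\Phikernel(x,y)-\biconj f(x)\leq\conj f(y)$, and supremizing over $x$ yields $\conj{\biconj f}(y)\leq\conj f(y)$.

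I do not anticipate a genuine obstacle: the whole argument is three short, essentially identical applications of the ``supremum dominates each term'' principle. The only point warranting care is the arithmetic of $\pm\infty$ --- in particular, $\conj f$ may be identically $+\infty$, so it need not itself be proper. This is precisely why I would establish the ``$\leq$'' half of \ref{thm:FY:triconj} via the direct definitional estimate above rather than by invoking \ref{thm:FY:biconj<=} for $\conj f$, and why I would check at each subtraction that the quantity being subtracted ($f$, hence $\conj f$) is guaranteed to be $>-\infty$.
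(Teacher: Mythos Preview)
Your proposal is correct and follows essentially the same route as the paper: both derive \ref{thm:FY:ineq} directly from the definition (using properness to secure $\conj f>-\infty$), obtain \ref{thm:FY:biconj<=} by feeding \ref{thm:FY:ineq} into the definition of the biconjugate, and prove the two inequalities in \ref{thm:FY:triconj} via order-reversal of conjugation together with the definitional lower bound $\biconj f(x)\geq\Phikernel(x,y)-\conj f(y)$. Your explicit decision to establish $\conj{\biconj f}\leq\conj f$ by the direct definitional estimate rather than by invoking \ref{thm:FY:biconj<=} for $\conj f$ is in fact slightly more careful than the paper's own argument, which applies \ref{thm:FY:biconj<=} with $\Phikernel$ and $\Phikernel*$ swapped to $\conj f$ without verifying that $\conj f$ is proper (it need not be, as you note).
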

\begin{proof}
	We start by observing that properness of \(f\) ensures that \(\conj f>-\infty\).
	\begin{itemize}
	\item ``\ref{thm:FY:ineq}''
		If \(x\notin\dom f\) the claim is obvious.
		Otherwise, the inequality follows from the definition of \(\conj{f}(y)\), cf. \eqref{eq:conj}.

	\item ``\ref{thm:FY:biconj<=}''
		For any \(\bar x\in X\) we have
		\[
			\biconj f(\bar x)
		\defeq
			\sup_{y\in Y}\set{\Phikernel(\bar x,y)-\conj{f}(y)}
		\leq
			\sup_{y\in Y}\set{f(\bar x)}
		=
			f(\bar x),
		\]
		where the inequality follows from assertion \ref{thm:FY:ineq}.

	\item ``\ref{thm:FY:triconj}''
		That \(\conj{f}\geq\conj{\biconj f}\) follows from assertion \ref{thm:FY:biconj<=} (with \(\Phikernel*\) replaced by \(\Phikernel\)).
		Conversely, for any \(\bar y\in Y\) we have
		\[
			\conj{\biconj f}(\bar y)
		\defeq
			\sup_{x\in X}\set{\Phikernel(x,\bar y)-\biconj{f}(x)}
		\geq
			\sup_{x\in X}\set{\Phikernel(x,\bar y)-f(x)}
		\defeq
			\conj f(\bar y),
		\]
		where the inequality again follows from assertion \ref{thm:FY:biconj<=}.
	\qedhere
	\end{itemize}\let\qed\relax
\end{proof}

The following result represents the most powerful tool of \(\Phikernel\)-conjugacy, as it demonstrates that the standard Fenchel duality result of convex analysis \cite[Thm. 23.5]{rockafellar1970convex} remains valid for arbitrary coupling functions \(\Phikernel\).
Compared to the formulation of \cite[Lem. 2.6]{laude2023dualities}, a fourth statement in the equivalence is added and the lemma is otherwise identical.

\begin{lemma}[Fenchel \(\Phikernel\)-duality, {\cite[Lem. 2.6]{laude2023dualities}}]\label{thm:FY}%
	For any proper function \(\func{f}{X}{\Rinf}\) and \((\bar x,\bar y)\in X\times Y\), the following are equivalent:
	\begin{enumerateq}
	\item \label{thm:FY:subdiff}%
		\(\bar y\in\subdiff f(\bar x)\);
	\item \label{thm:FY:subdiff*}%
		\(\bar x\in\subdiff* \conj{f}(\bar y)\cap\dom\subdiff f\);
	\item \label{thm:FY:=}%
		\(f(\bar x)+\conj f(\bar y)=\Phikernel(\bar x,\bar y)\);
	\item \label{thm:FY:biconj}%
		\(f(\bar x)=\biconj{f}(\bar x)\) and \(\bar y\in\subdiff \biconj{f}(\bar x)\).
	\end{enumerateq}
\end{lemma}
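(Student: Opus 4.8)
The plan is to take statement \ref{thm:FY:=} as the hub of the equivalence and attach \ref{thm:FY:subdiff}, \ref{thm:FY:subdiff*}, and \ref{thm:FY:biconj} to it through short direct arguments. The only inputs needed are the three properties recorded in the preceding Fact---the $\Phikernel$-Young inequality $f(x)+\conj f(y)\ge\Phikernel(x,y)$, the bound $\biconj f\le f$, and $\conj f>-\infty$ (from properness of $f$)---together with the reformulation of $\Phikernel$- and $\Phikernel*$-subgradients as maximizers from \cref{def:subdiff} (and its ``right'' analogue). Since $\Phikernel$ is real-valued and $f$ is proper, every step is a one- or two-line manipulation of a supremum; the only genuine point of care is to check that the terms being moved across an (in)equality are finite, so that no $\infty-\infty$ arises.

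I would first settle \ref{thm:FY:subdiff} $\Leftrightarrow$ \ref{thm:FY:=}. For ``$\Rightarrow$'': maximality of $\bar x$ over $X$ for $\Phikernel(\cdot,\bar y)-f$ says exactly $\conj f(\bar y)=\Phikernel(\bar x,\bar y)-f(\bar x)$, and since $\bar x\in\dom f$ with $f$ proper the right-hand side is real, so rearranging gives \ref{thm:FY:=}. For ``$\Leftarrow$'': \ref{thm:FY:=} forces $f(\bar x)\in\R$ (hence $\bar x\in\dom f$) via $\Phikernel(\bar x,\bar y)\in\R$ and $\conj f(\bar y)>-\infty$, and then the Young inequality $\conj f(\bar y)\ge\Phikernel(x,\bar y)-f(x)$ combined with \ref{thm:FY:=} yields $f(x)\ge f(\bar x)+\Phikernel(x,\bar y)-\Phikernel(\bar x,\bar y)$ for all $x\in X$, i.e. \ref{thm:FY:subdiff}. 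Next, \ref{thm:FY:subdiff} $\Rightarrow$ \ref{thm:FY:subdiff*}: $\bar x\in\dom\subdiff f$ is immediate, $\conj f(\bar y)=\Phikernel(\bar x,\bar y)-f(\bar x)\in\R$ gives $\bar y\in\dom\conj f$, and reading Young ``from the right'', $\Phikernel(\bar x,y)-\conj f(y)\le f(\bar x)=\Phikernel(\bar x,\bar y)-\conj f(\bar y)$ for every $y\in Y$, shows $\bar y$ maximizes $\Phikernel(\bar x,\cdot)-\conj f$, i.e. $\bar x\in\subdiff*\conj f(\bar y)$. For the converse \ref{thm:FY:subdiff*} $\Rightarrow$ \ref{thm:FY:=}: the ``right'' maximality gives $\Phikernel(\bar x,\bar y)-\conj f(\bar y)=\biconj f(\bar x)\in\R$, while the clause $\bar x\in\dom\subdiff f$ furnishes a witness $y'\in\subdiff f(\bar x)$ with (by \ref{thm:FY:subdiff} $\Rightarrow$ \ref{thm:FY:=} applied at $y'$) $f(\bar x)=\Phikernel(\bar x,y')-\conj f(y')\le\biconj f(\bar x)\le f(\bar x)$; hence $f(\bar x)=\biconj f(\bar x)$, and substituting back gives \ref{thm:FY:=}.

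It remains to hook in \ref{thm:FY:biconj}. Assuming \ref{thm:FY:subdiff} (hence \ref{thm:FY:=}), the same squeeze $f(\bar x)=\Phikernel(\bar x,\bar y)-\conj f(\bar y)\le\biconj f(\bar x)\le f(\bar x)$ yields $f(\bar x)=\biconj f(\bar x)\in\R$, and then $\biconj f(x)\ge\Phikernel(x,\bar y)-\conj f(\bar y)=\biconj f(\bar x)+\Phikernel(x,\bar y)-\Phikernel(\bar x,\bar y)$ for all $x$ is precisely $\bar y\in\subdiff\biconj f(\bar x)$, so \ref{thm:FY:biconj} holds. Conversely, \ref{thm:FY:biconj} gives $\bar x\in\dom\biconj f$---hence $\bar x\in\dom f$ since $f(\bar x)=\biconj f(\bar x)<\infty$ and $f$ is proper---together with maximality of $\bar x$ for $\Phikernel(\cdot,\bar y)-\biconj f$; because $\biconj f\le f$ and $f(\bar x)=\biconj f(\bar x)$, this maximality transfers verbatim to $\Phikernel(\cdot,\bar y)-f$, which is \ref{thm:FY:subdiff}. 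The whole argument is elementary; the one place that really needs attention is the step \ref{thm:FY:subdiff*} $\Rightarrow$ \ref{thm:FY:=}, where the clause $\bar x\in\dom\subdiff f$ is indispensable---without it one could have $\bar x\notin\dom f$ with $\biconj f(\bar x)$ strictly below $f(\bar x)$, and then \ref{thm:FY:=} fails---so the crux is to see that this clause is exactly what upgrades $f(\bar x)=\biconj f(\bar x)$ from the one-sided bound $\biconj f\le f$.
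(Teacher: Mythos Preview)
Your proof is correct and largely parallels the paper's: both take \ref{thm:FY:subdiff}\,$\Leftrightarrow$\,\ref{thm:FY:=} as the base, derive \ref{thm:FY:subdiff*} from Young's inequality read ``from the right'', and close the loop for \ref{thm:FY:subdiff*} by using the witness $y'\in\subdiff f(\bar x)$ to squeeze $f(\bar x)=\biconj f(\bar x)$. The one genuine difference is in how \ref{thm:FY:biconj} is attached. The paper runs the cycle \ref{thm:FY:subdiff*}\,$\Rightarrow$\,\ref{thm:FY:biconj}\,$\Rightarrow$\,\ref{thm:FY:=} and in both steps invokes the triconjugate identity $\conj f=\conj{\biconj f}$ from the preceding Fact. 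You instead prove \ref{thm:FY:subdiff}\,$\Rightarrow$\,\ref{thm:FY:biconj} and \ref{thm:FY:biconj}\,$\Rightarrow$\,\ref{thm:FY:subdiff} directly: the first via the squeeze plus the trivial bound $\biconj f(x)\ge\Phikernel(x,\bar y)-\conj f(\bar y)$, the second by transferring maximality of $\bar x$ for $\Phikernel(\cdot,\bar y)-\biconj f$ to $\Phikernel(\cdot,\bar y)-f$ through $\biconj f\le f$ and $f(\bar x)=\biconj f(\bar x)$. Your route is marginally more self-contained in that it never calls on the triconjugate identity, at the price of a slight redundancy (you squeeze $f(\bar x)=\biconj f(\bar x)$ twice, once in \ref{thm:FY:subdiff*}\,$\Rightarrow$\,\ref{thm:FY:=} and again in \ref{thm:FY:subdiff}\,$\Rightarrow$\,\ref{thm:FY:biconj}); the paper's cycle avoids this but leans on one more lemma. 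Either way the argument is elementary.
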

\begin{proof}~
	\begin{itemize}
	\item ``\ref{thm:FY:subdiff} \(\Leftrightarrow\) \ref{thm:FY:=}''
		This follows by definition, cf. \eqref{eq:conj} and \eqref{eq:subdiff}.

	\item ``(\ref{thm:FY:subdiff} \(\wedge\) \ref{thm:FY:=}) \(\Rightarrow\) \ref{thm:FY:subdiff*}''
		That \(\bar x\in\dom\subdiff f\) is obvious.
		The identity in the statement of assertion \ref{thm:FY:=} combined with \cref{thm:FY:biconj<=} implies that
		\(
			\biconj f(\bar x)+\conj f(\bar y)\leq\Phikernel(\bar x,\bar y)
		\).
		By \cref{thm:FY:ineq}, equality must hold, hence
		\[
			f(\bar x)+\conj f(\bar y)
		=
			\Phikernel(\bar x,\bar y)
		=
			\biconj f(\bar x)+\conj f(\bar y).
		\]
		By virtue of the shown equivalence ``\ref{thm:FY:subdiff} \(\Leftrightarrow\) \ref{thm:FY:=}'' (with \((\Phikernel,f)\) replaced by \((\Phikernel*,\conj f)\)), the inclusion \(\bar x\in\subdiff*\conj f(\bar y)\) follows.

	\item ``\ref{thm:FY:subdiff*} \(\Rightarrow\) \ref{thm:FY:biconj}''
		The inclusion \(\bar x\in\dom\subdiff f\) entails the existence of \(\bar\eta\in\subdiff f(\bar x)\), which by the equivalence ``\ref{thm:FY:subdiff} \(\Leftrightarrow\) \ref{thm:FY:=}'' implies that
		\(
			f(\bar x)+\conj f(\bar\eta)
		=
			\Phikernel(\bar x,\bar\eta)
		\).
		The same equivalence, this time with \((\Phikernel,f)\) replaced by \((\Phikernel*,\conj f)\), combined with the inclusion \(\bar x\in\subdiff*\conj f(\bar y)\) implies that
		\begin{align*}
			\Phikernel(\bar x,\bar y)
			-
			\conj f(\bar y)
		=
			\biconj f(\bar x)
		={} &
			\sup_{y\in Y}\set{
				\Phikernel(\bar x,y)
				-
				\conj f(y)
			}
		\\
		\geq{} &
			\Phikernel(\bar x,\bar\eta)
			-
			\conj f(\bar\eta)
		=
			f(\bar x)
		\geq
			\biconj f(\bar x),
		\end{align*}
		where the last inequality follows from \cref{thm:FY:biconj<=}.
		Hence, equality holds throughout, and in particular \(f(\bar x)=\biconj f(\bar x)\).
		By \cref{thm:FY:triconj}, one also has that
		\(
			\biconj f(\bar x)
			+
			\conj{\biconj f}(\bar y)
		=
			\Phikernel(\bar x,\bar y)
		\),
		and as before we conclude that \(\bar y\in\subdiff\biconj f(\bar x)\).

	\item ``\ref{thm:FY:biconj} \(\Rightarrow\) \ref{thm:FY:=}''
		In light of the equivalence ``\ref{thm:FY:subdiff} \(\Leftrightarrow\) \ref{thm:FY:=}'' (with \(f\) replaced by \(\biconj f\)), the inclusion \(\bar y\in\subdiff\biconj f(\bar x)\) implies that
		\[
			\Phikernel(\bar x,\bar\eta)
		=
			\biconj f(\bar x)+\conj{\biconj f}(\bar\eta)
		=
			\biconj f(\bar x)+\conj f(\bar\eta)
		=
			f(\bar x)+\conj f(\bar\eta)
		\]
		as claimed, where the second identity follows from \cref{thm:FY:triconj}.
	\qedhere
	\end{itemize}\let\qed\relax
\end{proof}

		\subsection{Bregman-Moreau envelopes as \texorpdfstring{\(\Phikernel\)}{Phi}-conjugates}\label{sec:Phiconj:BM}%
			By setting \(\Phikernel=-\tfrac{1}{\lambda}\D\) one recovers familiar objects treated in this manuscript; the verification of the following identities is straightforward from the respective definitions.

\begin{corollary}\label{thm:subdiff}%
	Fix \(\lambda>0\) and set \(\Phikernel=-\frac{1}{\lambda}\D\).
	Then, for any \(\func{f}{\X}{\Rinf}\) the following hold:
	\begin{enumerate}
	\item \label{thm:subdiff:env}%
		\(\conj f=-\env{f}\).
	\item \label{thm:subdiff:hull}%
		\(\biconj f=\hull_{\lambda}{f}\).
	\item \label{thm:subdiff:prox}%
		\(\prox_{\lambda f}=(\subdiff f)^{-1}\); that is,
		\(\bar y\in\subdiff f(\bar x)\)
		if and only if
		\(\bar x\in\prox_{\lambda f}(\bar y)\).
	\end{enumerate}
	Similarly, for any \(\func{g}{\Y}{\Rinf}\) the following hold:
	\begin{enumerate}[resume]
	\item
		\(\conj*g=-\env*{g}\).
	\item
		\(\biconj*g=\hull*_{\lambda}{g}\).
	\item \label{thm:subdiff*:prox*}%
		\(\prox*_{\lambda g}=(\subdiff*g)^{-1}\); that is,
		\(\bar x\in\subdiff*g(\bar y)\)
		if and only if
		\(\bar y\in\prox*_{\lambda g}(\bar x)\).
	\end{enumerate}
\end{corollary}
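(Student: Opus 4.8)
The plan is to unwind the definitions of the \(\Phikernel\)-objects from \cref{sec:Phiconj} for the particular coupling \(\Phikernel=-\tfrac1\lambda\D\) and to match them term by term with \cref{def:prox,def:env}. The preliminary point is that this choice fits the abstract framework, which requires \(\Phikernel\) to be a real-valued coupling between two nonempty sets: the natural candidates here are \(X\coloneqq\X=\dom\kernel\) and \(Y\coloneqq\Y=\interior\dom\kernel\). Indeed, for \((x,y)\in\X\times\Y\) one has \(\kernel(x)<\infty\), while \(\kernel\) is finite and differentiable at \(y\), so \(\D(x,y)=\kernel(x)-\kernel(y)-\innprod{\nabla\kernel(y)}{x-y}\in\R\); hence \(-\tfrac1\lambda\D\) is a genuine real-valued coupling between \(\X\) and \(\Y\), its asymmetry being precisely what separates the ``left'' objects (attached to \(f\) on \(\X\)) from the ``right'' ones (attached to \(g\) on \(\Y\)).

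I would establish the two \emph{envelope} identities first. From \cref{def:conj}, \(\conj{f}(y)=\sup_{x\in\X}\set{-\tfrac1\lambda\D(x,y)-f(x)}=-\inf_{x\in\X}\set{f(x)+\tfrac1\lambda\D(x,y)}=-\env{f}(y)\), and the ``flipped'' computation --- conjugation with respect to \(\Phikernel*\) --- gives \(\conj*{g}(x)=\sup_{y\in\Y}\set{-\tfrac1\lambda\D(x,y)-g(y)}=-\env*{g}(x)\). For the \emph{subgradient} identities I would invoke the \(\argmin\)-form of \(\Phikernel\)-subgradients in \cref{def:subdiff}: \(\bar y\in\subdiff f(\bar x)\) holds iff \(\bar x\in\argmin_{x\in\X}\set{f(x)-\Phikernel(x,\bar y)}=\argmin_{x\in\X}\set{f(x)+\tfrac1\lambda\D(x,\bar y)}=\prox_{\lambda f}(\bar y)\); since \((\subdiff f)^{-1}(\bar y)=\set{\bar x\in\X}[\bar y\in\subdiff f(\bar x)]\) by definition of the inverse operator, this says exactly \(\prox_{\lambda f}=(\subdiff f)^{-1}\), and the same manipulation with the roles of \(\Phikernel\) and \(\Phikernel*\) interchanged yields \(\prox*_{\lambda g}=(\subdiff* g)^{-1}\).

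The two \emph{biconjugate} identities then follow by composition. Since \(\biconj{f}=\conj*{(\conj{f})}\) and \(\conj{f}=-\env{f}\) is a function \(\Y\to\Rinf\), applying the already-proven right-conjugate identity to \(g\coloneqq-\env{f}\) gives \(\biconj{f}=\conj*{(-\env{f})}=-\env*{(-\env{f})}=\hull_{\lambda}{f}\), the last equality being the definition of the left Bregman proximal hull; dually, \(\biconj*{g}=\conj{(-\env*{g})}=-\env{(-\env*{g})}=\hull*_{\lambda}{g}\). Here it is worth noting that no properness of the intermediate functions \(-\env{f}\) and \(-\env*{g}\) is needed, since \cref{def:conj} is stated for arbitrary extended-real-valued functions.

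I expect no genuine obstacle: the statement is purely a translation between two notational systems. The only care required is bookkeeping --- keeping straight that \(f\) is a ``left'' object on \(\X\) while \(g\) is a ``right'' object on \(\Y\), that \(\Phikernel\)-conjugation swaps these roles (so \(\conj{f}\) lives on \(\Y\) and \(\conj*{g}\) on \(\X\)), and that the minus sign in \(\Phikernel=-\tfrac1\lambda\D\) turns suprema into infima and \(\argmax\) into \(\argmin\).
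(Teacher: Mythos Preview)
Your proposal is correct and follows precisely the approach the paper intends: the paper does not give a detailed proof but states that ``the verification of the following identities is straightforward from the respective definitions,'' and your argument is exactly that verification, carried out by unwinding \cref{def:conj,def:subdiff} with \(\Phikernel=-\tfrac{1}{\lambda}\D\) and matching against \cref{def:prox,def:env}. Your preliminary remark that \(-\tfrac{1}{\lambda}\D\) is genuinely real-valued on \(\X\times\Y\), and hence fits the abstract coupling framework, is a worthwhile point that the paper leaves implicit.
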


In light of the above relations, \cref{thm:FY:biconj<=,thm:FY:triconj} specialized to \(\Phikernel=-\frac{1}{\lambda}\D\) confirm that, for any proper functions \(\func{f}{\X}{\Rinf}\) and \(\func{g}{\Y}{\Rinf}\), one has that
\begin{equation}\label{eq:Phi:hull}
	\hull{f}
\leq
	f
\quad\text{and}\quad
	\hull*{g}
\leq
	g,
\end{equation}
as well as
\begin{equation}\label{eq:Phi:trienv}
	\env{f}
=
	\env{(\hull{f})}
\quad\text{and}\quad
	\env*{g}
=
	\env*{(\hull*{g})}.
\end{equation}
Similarly, \cref{thm:FY} reads as follows:

\begin{corollary}\label{thm:FY:BM}%
	Let \(\func{f}{\X}{\Rinf}\).
	For any \(\lambda>0\), \(\bar x\in\X\), and \(\bar y\in\Y\), the following three statements are equivalent:
	\begin{enumerateq}
	\item
		\(\bar x\in\prox_{\lambda f}(\bar y)\);
	\item
		\(\bar y\in\prox*_{-\lambda\env{f}}(\bar x)\) and \(\bar x\in\range\prox_{\lambda f}\);
	\item
		\(f(\bar x)=\hull_{\lambda}{f}(\bar x)\) and \(\bar x\in\prox_{\lambda\hull_{\lambda}{f}}(\bar y)\).
	\end{enumerateq}
	Similarly, for \(\func{g}{\Y}{\Rinf}\) the following are equivalent:
	\begin{enumerateq}[label=\textup{(\alph*')}]
	\item
		\(\bar y\in\prox*_{\lambda g}(\bar x)\);
	\item
		\(\bar x\in\prox_{-\lambda\env*{g}}(\bar y)\) and \(\bar y\in\range\prox*_{\lambda g}\);
	\item
		\(g(\bar y)=\hull*_{\lambda}{g}(\bar y)\) and \(\bar y\in\prox*_{\lambda\hull*_{\lambda}{g}}(\bar x)\).
	\end{enumerateq}
\end{corollary}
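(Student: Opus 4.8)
The plan is to read off the two lists of equivalences as the term-by-term translation of the Fenchel \(\Phikernel\)-duality \cref{thm:FY} to the coupling \(\Phikernel=-\tfrac1\lambda\D\), via the dictionary recorded in \cref{thm:subdiff}. First I would assume \(f\) (resp.\ \(g\)) proper, so that \cref{thm:FY} applies with \(X=\X\), \(Y=\Y\) and this choice of \(\Phikernel\); the case of improper \(f\) requires at most a routine separate check. With that in place, each of the three equivalences becomes a pure matter of substitution.

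Concretely, by \cref{thm:subdiff:prox} the first condition of \cref{thm:FY}, namely \(\bar y\in\subdiff f(\bar x)\), is exactly statement (a). By \cref{thm:subdiff:env} one has \(\conj{f}=-\env{f}\), so applying \cref{thm:subdiff*:prox*} to the ``right'' function \(-\env{f}\) rewrites \(\bar x\in\subdiff*\conj{f}(\bar y)\) as \(\bar y\in\prox*_{-\lambda\env{f}}(\bar x)\), while \(\bar x\in\dom\subdiff f\) says precisely that \(\bar x\in\prox_{\lambda f}(\bar\eta)\) for some \(\bar\eta\in\Y\), i.e.\ \(\bar x\in\range\prox_{\lambda f}\); hence the second condition of \cref{thm:FY} is exactly statement (b). Finally, \cref{thm:subdiff:hull} gives \(\biconj{f}=\hull_{\lambda}{f}\), so the requirement \(f(\bar x)=\biconj{f}(\bar x)\) becomes \(f(\bar x)=\hull_{\lambda}{f}(\bar x)\), and, again by \cref{thm:subdiff:prox} applied to \(\hull_{\lambda}{f}\), the inclusion \(\bar y\in\subdiff\biconj{f}(\bar x)\) becomes \(\bar x\in\prox_{\lambda\hull_{\lambda}{f}}(\bar y)\); so the fourth condition of \cref{thm:FY} is exactly statement (c). The equivalences among the first, second and fourth conditions of \cref{thm:FY} then deliver (a)\(\Leftrightarrow\)(b)\(\Leftrightarrow\)(c); the remaining (third) condition of \cref{thm:FY}, which here reads \(f(\bar x)+\tfrac1\lambda\D(\bar x,\bar y)=\env{f}(\bar y)\) and merely restates (a), is simply not transcribed.

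For the primed list I would run the identical argument with the roles of \(\X\) and \(\Y\) interchanged, invoking \cref{thm:FY} with \((\Phikernel,f)\) replaced by \((\Phikernel*,g)\) and using the ``right'' identities \(\conj*{g}=-\env*{g}\), \(\biconj*{g}=\hull*_{\lambda}{g}\) and \(\prox*_{\lambda g}=(\subdiff*g)^{-1}\) of \cref{thm:subdiff}. I do not anticipate any genuine obstacle: the only points calling for a moment's care are the bookkeeping identification \(\dom\subdiff f=\range\prox_{\lambda f}\) and the fact that \cref{thm:FY} presupposes properness of \(f\), which is why that case is cleared at the outset.
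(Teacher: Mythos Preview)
Your proposal is correct and follows precisely the approach of the paper, which presents the corollary without proof as the direct specialization of \cref{thm:FY} to the coupling \(\Phikernel=-\tfrac1\lambda\D\) via the dictionary of \cref{thm:subdiff}. Your added remark that \cref{thm:FY} formally requires properness of \(f\) is a fair observation the paper leaves implicit.
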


Finally, note that (left) \(\kernel\)-prox-boundedness of \(\func{f}{\X}{\Rinf}\) as in \cref{def:PB} corresponds to the existence of a ``(left) \(\Phikernel\)-support'' with \(\Phikernel=-\tfrac{1}{\lambda}\D\) for some \(\lambda>0\); namely, a pair \((y,\beta)\in\Y\times\R\) such that
\(
	f\geq\Phikernel({}\cdot{},y)-\beta
\).
Similarly, right \(\kernel\)-prox-boundedness of \(\func{g}{\Y}{\Rinf}\) as in \cref{def:PB*} corresponds to the existence of a ``(right) \(\Phikernel\)-support''  with \(\Phikernel=-\tfrac{1}{\lambda}\D\) for some \(\lambda>0\), hence a pair \((x,\beta)\in\X\times\R\) such that
\(
	g
\geq
	\Phikernel(x,{}\cdot{})-\beta
\).

Analogously to lsc convex functions for which minorants are affine, functions which can be expressed as supremum of left/right \(\Phikernel\)-minorants are called \emph{left/right \(\Phikernel\)-convex}.
It is not difficult to verify that these functions are precisely negative envelopes, at which point the relation \eqref{eq:Phi:trienv} confirms the familiar claim that, for an lsc function, being (left/right \(\Phikernel\)-)convex is equivalent to agreeing with the (left/right \(\Phikernel\)-)biconjugate.
We do not pursue these remarks any further here, as they are not central to the discussion that follows; for a thorough treatment we refer the interested reader to \cite{laude2023dualities}.

		\subsection{A new characterization of relative smoothness}
			A fundamental result in convex analysis states that for any \(\func{f}{\R^n}{\R}\) convex and differentiable, the following statements are equivalent:
\begin{enumerateq}
\item
	\(f\) is \emph{1-smooth}: namely, \(\nabla f\) is Lipschitz continuous with modulus 1;
\item
	\(\D_f(x,y)\leq\tfrac{1}{2}\norm{x-y}^2\)
	for any \(x,y\in\R^n\).
\item
	\(\j-f\) is convex;
\item \label{item:coco}%
	\(
		\D_f(x,y)
	\geq
		\tfrac{1}{2}
		\norm{\nabla f(x) - \nabla f(y)}^2
	\)
	for any \(x,y\in\R^n\);
\end{enumerateq}
see \cite[Thm. 2.1.5]{nesterov2018lectures}.
The second characterization is the celebrated \emph{quadratic upper bound} that has been ubiquitously exploited for establishing descent of gradient-based minimization algorithms.
The third statement has found important applications in the last decade, as it led to generalizations in the Bregman setting \cite{bauschke2017descent,bolte2018first}, obtained by replacing \(\j\) with a more general dgf \(\kernel\).

Thus, a proper and lsc function \(\func{f}{\X}{\Rinf}\) is \emph{smooth relative to \(\kernel\)}, or \emph{\B-smooth} for short, if \(f\) is differentiable on \(\interior\X\) and \(\kernel-f\) is convex on \(\interior\X\).
The terminology ``\B-smooth'' adopted here is borrowed from \cite{laude2023dualities}, which investigates the duality gap between relative smoothness and \emph{relative strong convexity} (\emph{\B-strong convexity}) for \(\kernel\) other than squared Euclidean norms.
The ``B'' in \B{}, short for ``Bregman'', distinguishes these notions from the corresponding \emph{anisotropic} counterparts that are shown to complete the duality picture.

The chain of equivalences thus far for a convex function \(f\) becomes
\[
	f \text{ \B-smooth}
~~\Leftrightarrow~~
	\kernel-f \text{ convex}
~~\Leftrightarrow~~
	\D_f(x,y)\leq\D(x,y)
	~
	\forall x,y\in\R^n,
\]
in which the first one is a mere matter of definition, and the second one is self-apparent from the fact that \(\D-\D_f=\D_{\kernel-f}\).
The question whether an equivalent of \ref{item:coco} could be included in the picture has only partially been addressed in \cite[Lem. 3]{dragomir2021fast}, see also \cite[Prop. 5.5]{dragomir2021methodes}, via the introduction of the following Bregman variant.

\begin{definition}[\B-cocoercivity inequality]\label{def:Bcoco}%
	Suppose that \(\kernel\) is Legendre and 1-coercive, and let \(\func{f}{\X}{\Rinf}\) be proper and lsc.
	We say that \(f\) \emph{satisfies the \B-cocoercivity inequality} if it is differentiable on \(\interior\X\) and
	\begin{equation}\label{eq:Bcoco}
		\D_{f}(\bar x,x)
	\geq
		\D*\bigl(
			\nabla\kernel(\bar x)-(\nabla f(\bar x)-\nabla f(x))
			,\,
			\nabla\kernel(\bar x)
		\bigr)
	\quad
		\forall x,\bar x\in\interior\X.
	\end{equation}
\end{definition}

It is shown in \cite[Prop. 5.5]{dragomir2021methodes} that this property is necessary for a convex \(f\) to be \B-smooth, but whether it is also sufficient remains an open question.
When \(\kernel=\j\), this inequality effectively reduces to \ref{item:coco}, and its well-known sufficiency for 1-smoothness is easy to verify.
Indeed, summing the two inequalities obtained by interchanging \(x\) and \(\bar x\) in \eqref{eq:Bcoco} leads to the celebrated Baillon-Haddad theorem \cite{baillon1977quelques}, namely the fact that
\[
	\innprod{\nabla f(x)-\nabla f(\bar x)}{x-\bar x}
\geq
	\norm{\nabla f(x)-\nabla f(\bar x)}^2
\quad
	\forall x,\bar x\in\R^n,
\]
and 1-smoothness follows from an easy application of the Cauchy-Schwarz inequality.
Unless \(\kernel\) is quadratic, however, the same arguments are not applicable.

In the following result we partially fill this gap.
We conjecture that the \B-coercivity inequality as in \cref{def:Bcoco} is also sufficient for \B-smoothness, and prove that this is indeed true at least whenever \(\dom\lsubdiff f\subseteq\interior\X\), as is the case when \(\X\) is open.
In this case, indeed, we show that the \B-cocoercivity inequality for \(f\) is precisely the \emph{anisotropic strong convexity} inequality for its conjugate relative to \(\kernel*\) \cite[Def. 3.8]{laude2023dualities}, \a*-strong convexity for short, after simple algebraic manipulations.
More generally, regardless of the validity of the condition \(\dom\lsubdiff f\subseteq\interior\X\), we identify \B-smoothness with the following \emph{extended} \B-cocoercivity inequality which also involves subgradients at boundary points.

\begin{definition}[extended \B-cocoercivity]\label{def:Bcoco+}%
	Suppose that \(\kernel\) is Legendre and 1-coercive, and let \(\func{f}{\X}{\Rinf}\) be proper, lsc, and differentiable on \(\interior\X\).
	We say that \(f\) \emph{satisfies the extended \B-cocoercivity inequality} if
	\begin{multline}\label{eq:Bcoco+}
		f(\bar x)-f(x)-\innprod{\xi}{\bar x-x}
	\geq
		\D*\bigl(
			\nabla\kernel(\bar x)-(\nabla f(\bar x)-\xi)
			,\,
			\nabla\kernel(\bar x)
		\bigr)
	\\
		\forall\bar x\in\interior\X,\ (x,\xi)\in\graph\lsubdiff f.
	\end{multline}
\end{definition}

The implications ``\ref{thm:Bcoco:Bsmooth} \(\Leftrightarrow\) \ref{thm:Bcoco:Phiconj} \(\Leftrightarrow\) \ref{thm:Bcoco:astrcvx}'' in the following characterization have already been shown in \cite[Prop. 5.11]{laude2025anisotropic}, through the adoption of \emph{extended arithmetics} to resolve indeterminate forms \(\infty-\infty\).
These arise as a consequence of the standard convention of considering functions on the entire space \(\R^n\), regardless of what the domain of \(\kernel\) is.
As a testimony of the simplifications incurred by \emph{our} convention, as well as to make this section self-contained, we replicate the proof with ``standard'' arithmetics.

\begin{theorem}[\B-smoothness and cocoercivity; \normalfont extension of {\cite[Prop. 5.11]{laude2025anisotropic}}]\label{thm:Bcoco}%
	\RenewDocumentCommand{\Phikernel}{s}{\IfBooleanTF{#1}{\hat\oldPsi}{\hat\oldPhi}}%
	Let \(\kernel\) be Legendre and 1-cocoercive, and let \(\func{\tilde f}{\R^n}{\Rinf}\) be the canonical extension (see \cref{def:fext}) of a proper, lsc, convex function \(\func{f}{\X}{\Rinf}\) such that \(\dom f\cap\interior\X\neq\emptyset\).
	Then, the following are equivalent:
	\begin{enumerateq}
	\item \label{thm:Bcoco:Bsmooth}%
		\(f\) is \B-smooth.
	\item \label{thm:Bcoco:Phiconj}%
		\(\dom f=\X\) and \(f=\kernel-\conj{\tilde f^*}{}^*(-{}\cdot{})\) on \(\interior\X\), where \(\Phikernel\coloneqq\kernel*({}\cdot-\cdot{})\).
	\item \label{thm:Bcoco:Bcoco+}%
		\(f\) satisfies the extended \B-cocoercivity inequality \eqref{eq:Bcoco+}.
	\item \label{thm:Bcoco:astrcvx}%
		The Fenchel conjugate \(\tilde f^*\) satisfies the \a*-strong convexity inequality
		\begin{multline}\label{eq:astrcvx}
			\tilde f^*(\xi)
		\geq
			\tilde f^*(\bar\xi)
			+
			\kernel*\bigl(\xi-\bar\xi+\nabla\kernel(\bar x)\bigr)
			-
			\kernel*(\nabla\kernel(\bar x))
		\\
			\forall\xi,\bar\xi\in\R^n,\bar x\in\lsubdiff\tilde f^*(\bar\xi)\cap\interior\X.
		\end{multline}
	\end{enumerateq}
	All these equivalent conditions imply the following:
	\begin{enumerateq}[resume]
	\item \label{thm:Bcoco:Bcoco}%
		\(f\) (is differentiable on \(\interior\X\) and) satisfies the \B-cocoercivity inequality \eqref{eq:Bcoco}.%
	\end{enumerateq}
	A sufficient condition for the converse implication to hold is having \(\dom\lsubdiff f\subseteq\interior\X\) (equivalently, \(\dom\lsubdiff f=\dom\nabla f=\interior\X\)), as is the case when \(\X\) is open or when \(f\) is essentially smooth.
\end{theorem}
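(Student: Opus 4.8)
The plan is to prove the cycle \ref{thm:Bcoco:Bsmooth} \(\Leftrightarrow\) \ref{thm:Bcoco:Phiconj} \(\Leftrightarrow\) \ref{thm:Bcoco:astrcvx} \(\Leftrightarrow\) \ref{thm:Bcoco:Bcoco+}, then obtain \ref{thm:Bcoco:Bcoco} from any of them, and finally establish the partial converse \ref{thm:Bcoco:Bcoco} \(\Rightarrow\) \ref{thm:Bcoco:Bcoco+} under the stated sufficient condition. I would open by recording the ambient facts that, since \(\kernel\) is Legendre and \(1\)-coercive, \(\dom\kernel*=\R^n\) (so \(\X*=\R^n\)), \(\kernel*\) is finite, convex and continuously differentiable on \(\R^n\), \(\func{\nabla\kernel}{\interior\X}{\R^n}\) is a bijection with inverse \(\nabla\kernel*\) and \(\range\nabla\kernel=\R^n\); moreover \(\tilde f^{**}=\tilde f\) by properness, lower semicontinuity and convexity. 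A recurring object is the canonical extension (\cref{def:fext}) of \(x\mapsto\kernel(x)-f(x)\) on \(\X\), denoted \(\widetilde{\kernel-f}\); note it is \(\Rinf\)-valued precisely when \(\dom f=\X\).

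For the equivalence \ref{thm:Bcoco:Bsmooth} \(\Leftrightarrow\) \ref{thm:Bcoco:Phiconj}, I would first identify the composite conjugate in \ref{thm:Bcoco:Phiconj}. Expanding \(\kernel*(\xi-y)=\sup_z\set{\innprod{z}{\xi-y}-\kernel(z)}\) and swapping suprema gives \(\conj{\tilde f^*}(y)=\sup_{z\in\X}\set{\innprod{z}{-y}-(\kernel-\tilde f)(z)}\) (using \(\tilde f^{**}=\tilde f\)); this is identically \(+\infty\) unless \(\dom f=\X\), in which case it equals \((\widetilde{\kernel-f})^*(-y)\), so \(\conj{\tilde f^*}{}^*(-x)=(\widetilde{\kernel-f})^{**}(x)\), which equals \(\conv*(\kernel-f)(x)\) on \(\X\) by \cref{thm:cvxext}. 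Thus \ref{thm:Bcoco:Phiconj} amounts to: \(\dom f=\X\) and \(\conv*(\kernel-f)=\kernel-f\) on \(\interior\X\). The implication from \ref{thm:Bcoco:Bsmooth}: a segment-plus-lower-semicontinuity argument---slide from an interior point of \(\X\) to a point of \(\X\cap\boundary\X\), using finiteness of \(\kernel\) at both endpoints and an affine minorant of the convex function \(\kernel-f\) along the segment---yields \(\dom f=\X\) and in fact convexity of \(\kernel-f\) on all of \(\X\); then \(\widetilde{\kernel-f}\) is proper convex on \(\R^n\) and hence coincides with its closed hull on \(\interior\X\). Conversely, \ref{thm:Bcoco:Phiconj} makes \(\kernel-f=\conv*(\kernel-f)\) convex and finite on \(\interior\X\); since \(\kernel=f+(\kernel-f)\) there with \(\kernel\) differentiable, the singleton \(\fsubdiff\kernel(\bar x)=\set{\nabla\kernel(\bar x)}\) contains the nonempty Minkowski sum \(\fsubdiff f(\bar x)+\fsubdiff(\kernel-f)(\bar x)\), forcing \(\fsubdiff f(\bar x)\) to be a singleton and hence \(f\) differentiable on \(\interior\X\).

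The step \ref{thm:Bcoco:Bsmooth} \(\Leftrightarrow\) \ref{thm:Bcoco:astrcvx} is the crux. Fixing an admissible pair in \eqref{eq:astrcvx} and taking the Fenchel conjugate in \(\xi\) (using \(\kernel^{**}=\kernel\), \(\tilde f^{**}=\tilde f\), and the Fenchel equalities \(\bar x\in\lsubdiff\tilde f^*(\bar\xi)\Leftrightarrow\bar\xi\in\lsubdiff f(\bar x)\), \(\tilde f^*(\bar\xi)=\innprod{\bar x}{\bar\xi}-f(\bar x)\), \(\kernel*(\nabla\kernel(\bar x))=\innprod{\bar x}{\nabla\kernel(\bar x)}-\kernel(\bar x)\)) turns the ``\(\forall\xi\)'' inequality into the single assertion \(\nabla\kernel(\bar x)-\bar\xi\in\fsubdiff(\widetilde{\kernel-f})(\bar x)\); in particular, finiteness of the resulting affine bound wherever \(\kernel\) is finite already forces \(\dom f=\X\). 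Hence \ref{thm:Bcoco:astrcvx} says exactly that \(\dom f=\X\) and \(\widetilde{\kernel-f}\) has a global affine minorant touching at every \(\bar x\in\interior\X\cap\dom\lsubdiff f\). Since \(\dom f=\X\) makes \(f\) finite convex on \(\X\), one has \(\dom\lsubdiff f\supseteq\interior\X\), so such a minorant exists at \emph{every} \(\bar x\in\interior\X\); squeezing \((\widetilde{\kernel-f})^{**}\) between it and \(\kernel-f\) gives \((\widetilde{\kernel-f})^{**}=\kernel-f\) on \(\interior\X\), i.e.\ \ref{thm:Bcoco:Phiconj}, hence \ref{thm:Bcoco:Bsmooth}. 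The reverse is immediate: under \ref{thm:Bcoco:Bsmooth} the convex \(\widetilde{\kernel-f}\) equals \(\kernel-f\) near any \(\bar x\in\interior\X\), where it is differentiable with gradient \(\nabla\kernel(\bar x)-\nabla f(\bar x)\). Finally I would fold in \ref{thm:Bcoco:Bcoco+}: substituting the Fenchel equalities for \((x,\xi)\in\graph\lsubdiff f\) and for \(\nabla f(\bar x)\) turns \eqref{eq:Bcoco+} into \eqref{eq:astrcvx} with \(\bar\xi=\nabla f(\bar x)\) but with \(\xi\) restricted to \(\range\lsubdiff f=\dom\lsubdiff\tilde f^*\); since \(\tilde f^*\) is closed convex and the right-hand side is continuous in \(\xi\), continuity of \(\tilde f^*\) along segments within \(\dom\tilde f^*\) upgrades it to all \(\xi\in\R^n\). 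As \ref{thm:Bcoco:Bcoco+} additionally carries differentiability of \(f\) on \(\interior\X\) (making \(\lsubdiff f(\bar x)=\set{\nabla f(\bar x)}\) there), this yields \ref{thm:Bcoco:Bcoco+} \(\Leftrightarrow\) \ref{thm:Bcoco:astrcvx}.

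Lastly, \ref{thm:Bcoco:Bcoco} is the restriction of \eqref{eq:Bcoco+} to \(x\in\interior\X\), \(\xi=\nabla f(x)\), so \ref{thm:Bcoco:Bsmooth}--\ref{thm:Bcoco:astrcvx} imply it; and if \(\dom\lsubdiff f\subseteq\interior\X\), then every \((x,\xi)\in\graph\lsubdiff f\) has \(x\in\interior\X\), where \(f\)---differentiable by \ref{thm:Bcoco:Bcoco}, and convex---has \(\lsubdiff f(x)=\set{\nabla f(x)}\), so \(\xi=\nabla f(x)\) and \eqref{eq:Bcoco+} collapses to \eqref{eq:Bcoco}, yielding \ref{thm:Bcoco:Bcoco+}; the cases ``\(\X\) open'' and ``\(f\) essentially smooth'' are both instances of \(\dom\lsubdiff f\subseteq\interior\X\). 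I expect the main obstacle to be the domain bookkeeping that here replaces the extended arithmetics of \cite{laude2025anisotropic}: making the ``de-conjugation'' identity for \(\conj{\tilde f^*}{}^*\) rigorous (isolating the degenerate case \(\dom f\subsetneq\X\)) and, more delicately, \emph{extracting} ``\(\dom f=\X\)'' and differentiability of \(f\) on \(\interior\X\) as consequences rather than hypotheses---exactly what keeps the argument within ordinary \(\Rinf\)-valued calculus.
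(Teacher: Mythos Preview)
Your proof is correct and takes a genuinely different route from the paper's. The paper closes the loop as \ref{thm:Bcoco:Bsmooth}\(\Rightarrow\)\ref{thm:Bcoco:Bcoco+}\(\Rightarrow\)\ref{thm:Bcoco:astrcvx}\(\Rightarrow\)\ref{thm:Bcoco:Phiconj}\(\Rightarrow\)\ref{thm:Bcoco:Bsmooth}: for \ref{thm:Bcoco:Bsmooth}\(\Rightarrow\)\ref{thm:Bcoco:Bcoco+} it uses a three-point identity together with the specific choice \(\hat u=\nabla\kernel*(\nabla\kernel(\bar x)-\nabla f(\bar x)+\xi)\), and for the hard step \ref{thm:Bcoco:astrcvx}\(\Rightarrow\)\ref{thm:Bcoco:Phiconj} it invokes the abstract \(\Phikernel\)-Fenchel duality of \cref{thm:FY} with the coupling \(\kernel*({}\cdot{}-{}\cdot{})\) and a double-supremum argument. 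You bypass the \(\Phikernel\)-machinery entirely: your two key observations are (i) the explicit identity, obtained by swapping suprema, that collapses the composite conjugate in \ref{thm:Bcoco:Phiconj} to \((\widetilde{\kernel-f})^{**}\) and thereby reduces \ref{thm:Bcoco:Phiconj} to ``\(\dom f=\X\) and \(\kernel-f\) coincides with its closed convex hull on \(\interior\X\)''; and (ii) Fenchel-conjugating \eqref{eq:astrcvx} in \(\xi\) to turn it into the subgradient inclusion \(\nabla\kernel(\bar x)-\bar\xi\in\fsubdiff(\widetilde{\kernel-f})(\bar x)\), which links \ref{thm:Bcoco:astrcvx} directly to \ref{thm:Bcoco:Bsmooth}/\ref{thm:Bcoco:Phiconj} without passing through \ref{thm:Bcoco:Bcoco+}. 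Your handling of \ref{thm:Bcoco:Bcoco+}\(\Leftrightarrow\)\ref{thm:Bcoco:astrcvx} and of \ref{thm:Bcoco:Bcoco} matches the paper's. The paper's route showcases the \(\Phikernel\)-conjugacy framework it is advocating; yours is self-contained within ordinary Fenchel duality and arguably shorter. One spot to make fully explicit: in \ref{thm:Bcoco:Bsmooth}\(\Rightarrow\)\ref{thm:Bcoco:Phiconj} (and likewise in the reverse of your conjugation argument for \ref{thm:Bcoco:astrcvx}), the affine minorant of \(\kernel-f\) obtained on \(\interior\X\) extends to all of \(\X\) because, once \(\dom f=\X\), both \(\kernel\) and the convex lsc \(f\) are continuous along segments from interior points---you allude to this but it deserves one line.
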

\begin{proof}
	We begin by commenting that the implication ``\ref{thm:Bcoco:Bcoco+} \(\Rightarrow\) \ref{thm:Bcoco:Bcoco}'' is obvious, since \eqref{eq:Bcoco} amounts to \eqref{eq:Bcoco+} with both \(x\) and \(\bar x\) restricted to \(\interior\X\).
	For this reason, whenever \(\dom\lsubdiff f=\dom\nabla f=\interior\X\) it is clear that \eqref{eq:Bcoco} and \eqref{eq:Bcoco+}, and in particular assertions \ref{thm:Bcoco:Bcoco+} and \ref{thm:Bcoco:Bcoco}, are equivalent.
	We now prove the equivalences among the first four statements without this assumption.
	\begin{itemize}
	\item ``\ref{thm:Bcoco:Bsmooth} \(\Rightarrow\) \ref{thm:Bcoco:Bcoco+}''
		This is a minor modification of \cite[Prop. 5.5]{dragomir2021methodes}; for self-containedness we provide a simplified proof.
		Fix \(\bar x\in\interior\X\) and \((x,\xi)\in\graph\lsubdiff f\), and denote \(\D_{f,\xi}({}\cdot{},x)\coloneqq f-f(x)-\innprod{\xi}{{}\cdot{}-x}\).
		By direct computation, it is easy to obtain the following generalization of the three-point identity \cite[Lem. 3.1]{chen1993convergence}, holding for any \(u\in\X\):
		\[
			\D_{f,\xi}(u,x)
		=
			\D_f(u,\bar x)
			+
			\D_{f,\xi}(\bar x,x)
			+
			\innprod{u-\bar x}{\nabla f(\bar x)-\xi}.
		\]
		Note that \(\hat u\coloneqq\nabla\kernel*\bigl(\nabla\kernel(\bar x)-(\nabla f(\bar x)-\xi)\bigr)\) is such that
		\(
			\nabla f(\bar x)-\xi
		=
			-\bigl(\nabla\kernel(\hat u)-\nabla\kernel(\bar x)\bigr)
		\),
		so that for \(u=\hat u\) the above identity simplifies as
		\begin{align*}
			\underbracket[0.5pt]{\D_{f,\xi}(\hat u,x)}_{\geq0}
		={} &
			\D_f(\hat u,\bar x)
			+
			\D_{f,\xi}(\bar x,x)
			-
			\DD(\hat u, \bar x)
		\\
		={} &
			-
			\smash{\overbracket[0.5pt]{\D_{\kernel-f}(\hat u,\bar x)}^{\geq0}}
			+
			\D_{f,\xi}(\bar x,x)
			-
			\D*(\nabla\kernel(\hat u),\nabla\kernel(\bar x)),
		\end{align*}
		where with \(\DD(\hat u,\bar x)=\D(\hat u,\bar x)+\D(\bar x,\hat u)\) we denote the symmetrized Bregman distance between \(\hat u\) and \(\bar x\).
		This yields the desired extended \B-cocoercivity inequality \eqref{eq:Bcoco+}.

	\item ``\ref{thm:Bcoco:Bcoco+} \(\Rightarrow\) \ref{thm:Bcoco:astrcvx}''
		\Cref{thm:cvxext} yields that \(f=\tilde f^{**}\restr_{\X}\), hence that
		\[
			\graph\lsubdiff f
		=
			\graph\lsubdiff\tilde f^{**}
			\cap
			(\X\times\R^n)
		=
			\graph(\lsubdiff\tilde f^*)^{-1}
			\cap
			(\X\times\R^n).
		\]
		It follows from the assumptions that the condition \(\bar x\in\lsubdiff\tilde f^*(\bar\xi)\cap\interior\X\) is equivalent to \(\bar x\in\interior\X\) and \(\bar\xi=\nabla f(\bar x)\), hence that \eqref{eq:astrcvx} reads
		\begin{multline}\label{eq:astrcvx:nablaf}
			\tilde f^*(\xi)
		\geq
			\tilde f^*(\nabla f(\bar x))
			+
			\kernel*\bigl(\xi-\nabla f(\bar x)+\nabla\kernel(\bar x)\bigr)
			-
			\kernel*(\nabla\kernel(\bar x))
		\\
			\forall\xi\in\R^n,\bar x\in\interior\X
		\end{multline}
		in this case.
		In what follows, let \(\bar x\in\interior\X\) be arbitrary.
		Note that \eqref{eq:astrcvx:nablaf} is vacuously true for \(\xi\notin\dom\tilde f^*\).
		Next, suppose that \(\xi\in\dom\lsubdiff\tilde f^*\); the boundary case will be treated last.
		Observe that
		\begin{align*}
		&
			\D*\bigl(
				\nabla\kernel(\bar x)-(\nabla f(\bar x)-\xi)
				,\,
				\nabla\kernel(\bar x)
			\bigr)
		\\
		={} &
			\kernel*\bigl(\nabla\kernel(\bar x)-(\nabla f(\bar x)-\xi)\bigr)
			-
			\kernel*(\nabla\kernel(\bar x))
			+
			\innprod{\bar x}{\nabla f(\bar x)-\xi},
		\end{align*}
		whereas, for any \(x\in\lsubdiff\tilde f^*(\xi)\),
		\begin{align*}
			f(\bar x)-f(x)-\innprod{\xi}{\bar x-x}
		={} &
			f(\bar x)+\tilde f^*(\xi)-\innprod{\xi}{\bar x}
		\\
		={} &
			\tilde f^*(\xi)
			-
			\tilde f^*(\nabla f(\bar x))
			+
			\innprod{\bar x}{\nabla f(\bar x)-\xi}.
		\end{align*}
		By using these identities in the restricted \B-cocoercivity inequality \eqref{eq:Bcoco+}, the claimed \eqref{eq:astrcvx:nablaf} is obtained.

		Finally, points \(\xi\in\dom\tilde f^*\setminus\dom\lsubdiff\tilde f^*\) belong to the relative boundary of \(\tilde f^*\).
		Since \(\kernel*\) is also continuous (on \(\R^n\)) and since \(\tilde f^*\) is proper, convex, and lsc, a simple limiting argument along a line segment stemming from any \(\bar\eta\in\relint\dom\tilde f^*\) as in \cite[Thm. 7.5]{rockafellar1970convex} yields the needed inequality holding for any \(\xi\in\R^n\).

	\item ``\ref{thm:Bcoco:astrcvx} \(\Rightarrow\) \ref{thm:Bcoco:Phiconj}''
		\RenewDocumentCommand{\Phikernel}{s}{\IfBooleanTF{#1}{\hat\oldPsi}{\hat\oldPhi}}%
		The assumptions ensure that \(\kernel*\) is Legendre and with full domain, and that \(\emptyset\neq\relint\dom f\subseteq\dom\lsubdiff f\cap\interior\X\).
		As a consequence of convexity and lower semicontinuity of \(f\), it follows from \cref{thm:cvxext} that
		\[
			\tilde f^{**}\restr_{\X}=f,
		\]
		hence that
		\[
			\graph\lsubdiff f
		=
			\graph\lsubdiff\tilde f^{**}
			\cap
			(\X\times\R^n)
		=
			\graph(\lsubdiff\tilde f^*)^{-1}
			\cap
			(\X\times\R^n).
		\]
		Let \(\bar x\in\dom\lsubdiff f\cap\interior\X\) be fixed, and pick any \(\bar\xi\in\lsubdiff f(\bar x)=\lsubdiff\tilde f(\bar x)\) (in particular, \(\bar x\in\lsubdiff\tilde f^*(\bar\xi)\)).
		Denote \(\bar\eta\coloneqq\bar\xi-\nabla\kernel(\bar x)\), so that the \a*-strong convexity inequality \eqref{eq:astrcvx} reads
		\begin{equation}\label{eq:astrcvx:bar}
			\tilde f^*(\xi)
		\geq
			\tilde f^*(\bar\xi)
			+
			\kernel*\bigl(\xi-\bar\eta\bigr)
			-
			\kernel*(\bar\xi-\bar\eta)
		\quad
			\forall\xi\in\R^n.
		\end{equation}
		In terms of the couplings \(\func{\Phikernel,\Phikernel*}{\R^n}{\R}\) defined as
		\[
			\Phikernel(\xi,\eta)
		\coloneqq
			\kernel*(\xi-\eta)
		\eqqcolon
			\Phikernel*(\eta,\xi)
		\quad
			\forall\xi,\eta\in\R^n,
		\]
		according to \cref{def:subdiff} the inequality \eqref{eq:astrcvx:bar} means that \(\bar\eta\in\subdiff\tilde f^*(\bar\xi)\).
		It then follows from \cref{thm:FY} that
		\[
			\tilde f^*(\bar\xi)
		=
			\biconj{(\tilde f^*)}(\bar\xi)
		=
			\sup_{\eta\in\R^n}\set{\kernel*(\bar\xi-\eta)-\conj{\tilde f^*}(\eta)}
		=
			\sup_{\eta\in\R^n}\sup_{x\in\R^n}H(x,\eta),
		\]
		where
		\[
			H(x,\eta)
		\coloneqq
			\innprod{\bar\xi-\eta}{x}-\kernel(x)-\conj{\tilde f^*}(\eta).
		\]
		Note that
		\[
			\nabla\kernel*(\bar\xi-\eta)
		\in
			\argmax H({}\cdot{},\eta).
		\]
		Moreover, the inclusion \(\bar\eta\in\subdiff\tilde f^*(\bar\xi)\) also implies through \cref{thm:FY} that \(\bar\xi\in\subdiff*\conj{\tilde f^*}(\bar\eta)\), hence that
		\[
			\bar\eta
		\in
			\argmax_\eta
			\set{\kernel*(\bar\xi-\eta)-\conj{\tilde f^*}(\eta)}
		=
			\argmax_\eta
			H(\nabla\kernel*(\bar\xi-\eta),\eta).
		\]
		The two inclusions combined yield that
		\(
			(\bar x,\bar\eta)
		=
			(\nabla\kernel*(\bar\xi-\bar\eta),\bar\eta)
		\in
			\argmax H
		\),
		resulting in
		\begin{align*}
			\tilde f^*(\bar\xi)
		=
			H(\bar x,\bar\eta)
		={} &
			\sup_{\eta\in\R^n}H(\bar x,\eta)
		\\
		={} &
			\sup_{\eta\in\R^n}\set{\innprod{\bar\xi-\eta}{\bar x}-\kernel(\bar x)-\conj{\tilde f^*}(\eta)}
		\\
		={} &
			\innprod{\bar\xi}{\bar x}-\kernel(\bar x)+\conj{\tilde f^*}{}^*(-\bar x).
		\end{align*}
		Rearranging (which is legit since \(\innprod{\bar\xi}{\bar x}-\kernel(\bar x)\in\R\)),
		\[
			\conj{\tilde f^*}{}^*(-\bar x)
		=
			\kernel(\bar x)-\innprod{\bar\xi}{\bar x}+\tilde f^*(\bar\xi)
		=
			\kernel(\bar x)-\tilde f^{**}(\bar x)
		=
			\kernel(\bar x)-f(\bar x),
		\]
		where the last two identities use the fact that \((\bar\xi,\bar x)\in\graph\lsubdiff\tilde f^*\) and that \(\bar x\in\X\).
		To conclude, observe that \eqref{eq:astrcvx:bar} (for a fixed \(\bar x\in\interior\X\cap\dom\lsubdiff f\) and \(\bar\xi\in\lsubdiff f(\bar x)\), and with \(\bar\eta\coloneqq\bar\xi-\nabla\kernel(\bar x)\)) implies that, for any \(x\in\X\),
		\begin{align*}
			f(x)
		=
			\tilde f^{**}(x)
		={} &
			\sup_{\xi\in\R^n}\set{
				\innprod{x}{\xi}-\tilde f^*(\xi)
			}
		\\
		\leq{} &
			\sup_{\xi\in\R^n}\set{
				\innprod{x}{\xi}
				-
				\kernel*\bigl(\xi-\bar\eta\bigr)
			}
			+
			\kernel*(\bar\xi-\bar\eta)
			-
			\tilde f^*(\bar\xi)
		\\
		={} &
			\kernel(x)
			+
			\innprod{\bar\eta}{x}
			+
			\underbracket[0.5pt]{
				\kernel*(\bar\xi-\bar\eta)
				-
				\tilde f^*(\bar\xi)
			}_{\text{finite constant}},
		\end{align*}
		and in particular that \(\dom f=\dom\tilde f=\dom\kernel=\X\).

	\item ``\ref{thm:Bcoco:Phiconj} \(\Rightarrow\) \ref{thm:Bcoco:Bsmooth}''
		Obvious, since \(\kernel-f\) equals the lsc and convex function \(\conj{\tilde f^*}{}^*(-{}\cdot{})\) on \(\interior\X\), which is also proper since \(f\) and \(\kernel\) are finite on \(\X\).
	\qedhere
	\end{itemize}\let\qed\relax
\end{proof}%

			Surprisingly, when \(\kernel\) has not full domain, or equivalently when \(\kernel*\) is not 1-coercive, an \a*-strongly convex function may fail to be essentially strictly convex.
In light of the equivalence shown in \cref{thm:Bcoco}, another way to phrase this is that, unless the Legendre and 1-coercive dgf \(\kernel\) has full domain, the conjugate of a convex function that is smooth relative to \(\kernel\) is not necessarily essentially strictly convex:
\begin{equation}
	f \text{ \B-smooth}
~\not\Rightarrow~
	\tilde f^* \text{ essentially strictly convex.}
\end{equation}
This sharpens the duality gap observed in \cite{laude2023dualities} for the full-domain setting, where although \B*-strong convexity of \(f^*\) is shown to be unnecessary for \B-smooth\-ness of \(f\), essential strict convexity is still guaranteed; see \cite[Prop. 4.1]{laude2023dualities}.
The following counterexample showcases this sharper gap.

\begin{example}[\a*-strong convexity without essential strict convexity]%
	Consider \(\func{\kernel}{\R^2}{\Rinf}\) given by \(\kernel(x)=\tfrac{x_1^2}{4x_2}+\tfrac{1}{2}x_2^2-\ln x_2\) on \(\X=\R\times\R_{++}\), and \(\infty\) elsewhere.
	It is easy to verify that \(\kernel\) is convex, Legendre and 1-coercive, and that
	\begin{equation}\label{eq:Y}
		\kernel*(\xi)
	=
		\tfrac{1}{2}Y(\xi)^2-1+\ln Y(\xi)
	\quad\text{with}\quad
		Y(\xi)
	\coloneqq
		\frac{
			\xi_1^2+\xi_2+\sqrt{(\xi_1^2+\xi_2)^2+4}
		}{2}.
	\end{equation}
	The function \(\func{f}{\X}{\Rinf}\) given by \(f(x)=\tfrac{x_1^2}{4x_2}\) is convex and \B-smooth.
	As such, the conjugate of its canonical extension \(\func{\tilde f}{\R^2}{\Rinf}\) as in \cref{def:fext}, easily seen to be
	\[
		\tilde f^*
	=
		\indicator_D
	\quad\text{where}\quad
		D\coloneqq\set{\xi\in\R^2}[\xi_1^2+\xi_2\leq0],
	\]
	satisfies the \a*-strong convexity inequality \eqref{eq:astrcvx}, yet it is clearly not essentially strictly convex.
\end{example}

The culprit of this shortcoming is attributable to the restriction \(\bar x\in\interior\X\) in \eqref{eq:astrcvx}.
The function in the above example well showcases this fact.
Indeed, for any \(\bar\xi\in\interior D\), the only element of \(\lsubdiff\tilde f^*(\bar\xi)=\lsubdiff\indicator_D(\bar\xi)\) is \(\bar x=0\), which does not belong to \(\interior\X=\R\times\R_{++}\).
The only other possible choices for \(\bar\xi\) are elements of \(\boundary D=\set{(u,-u^2)}[u\in\R]\), at which
\[
	\lsubdiff\tilde f^*(\bar\xi)
=
	\lsubdiff\indicator_D(\bar\xi)
=
	\set{t(2u,1)}[t\geq0]
\quad
	\text{for }\bar\xi=(u,-u^2).
\]
We shall confirm the validity of \eqref{eq:astrcvx}.
For \(\bar x=t(2u,1)\in\interior\X\) with \(t>0\) one has that \(\nabla\kernel(\bar x)=\bigl(u,-u^2+t-\tfrac{1}{t}\bigr)\), hence
\[
	Y\bigl(\nabla\kernel(\bar x)\bigr)
=
	\frac{
		t-\tfrac{1}{t}+\sqrt{(t-\tfrac{1}{t})^2+4}
	}{2}
=
	t,
\]
so that
\(
	\kernel*\bigl(\nabla\kernel(\bar x)\bigr)
=
	\tfrac{1}{2}t^2-1+\ln t
\)
only depends on \(t>0\), but not on \(u\in\R\).
The \a*-strong convexity inequality \eqref{eq:astrcvx} then reads
\begin{equation}\label{eq:exastr}
	\kernel*(\nabla\kernel(\bar x))
=
	\tfrac{1}{2}t^2-1+\ln t
\geq
	\kernel*\bigl(\xi_1,\xi_2+t-\tfrac{1}{t}\bigr)
\quad
	\forall\xi\in D,t>0.
\end{equation}
Observing that
\begin{align*}
	Y\bigl(\xi_1,\xi_2+t-\tfrac{1}{t}\bigr)
={} &
	\frac{
		\xi_1^2+\xi_2+t-\tfrac{1}{t}+\sqrt{(\xi_1^2+\xi_2+t-\tfrac{1}{t})^2+4}
	}{2}
\\
\leq{} &
	\frac{
		t-\tfrac{1}{t}+\sqrt{(t-\tfrac{1}{t})^2+4}
	}{2}
=
	t
=
	Y\bigl(\nabla\kernel(\bar x)\bigr)
\quad
	\forall \xi\in D,
\end{align*}
and that \(\kernel*\) is increasing with respect to \(Y(\xi)\) in \eqref{eq:Y}, the validity of the \a*-strong convexity inequality \eqref{eq:exastr} is confirmed.

	\section{Conclusion}\label{sec:conclusion}
		In this work, we have revisited fundamental objects in Bregman-based optimization, the Bregman proximal mapping, the Bregman-Moreau envelope, and the Bregman proximal hull, through a shift in perspective that emphasizes their natural domains of definition.
Rather than adhering to the conventional viewpoint that functions must be defined on the full ambient space \(\R^n\), we have demonstrated that a more precise and flexible framework arises by instead considering functions defined only on appropriate subsets, such as \(\dom\kernel\) and \(\interior\dom\kernel\), determined by the distance-generating function \(\kernel\).

By avoiding unnecessary extensions of functions outside their meaningful domains, we were able to formulate more concise and natural versions of key results, highlight domain-sensitive issues that have led to oversights in prior literature, and, most importantly, considerably extend the applicability of known results under minimal assumptions.

All this is achieved by first meticulously revisiting known convex and variational analysis concepts for set-valued operators and extended-real-valued functions defined on \emph{subsets} of \(\R^n\), leading to new insights which we believe enjoy an independent appeal.

Several promising directions for future research emerge from the domain-aware perspective developed in this work.
One natural extension concerns the study of \emph{Klee envelopes} \cite{bauschke2009bregmanK}, where a refined treatment of domain restrictions may yield analogous benefits in terms of generality and conceptual clarity.
Another compelling direction involves relaxing the differentiability assumptions on the distance-generating function \(\kernel\).
Extending Bregman distances to accommodate nondifferentiable or merely convex generators, along the lines of the approach proposed in \cite{maddison2021dual}, could open the door to a more comprehensive treatment of Bregman objects.
We anticipate that both lines of inquiry would benefit from the rigorous domain-sensitive framework introduced in this paper, and could further reinforce its utility in the analysis of Bregman-type methods.

	\subsection*{Acknowledgements}
		We express our sincere gratitude to Prof. Xianfu Wang and Dr. Emanuel Laude for the many constructive comments and feedback, and for pointing us to many references which we had originally overlooked.
		We are also deeply grateful to Prof. Shizuo Kaji for many insightful discussions on topology which were instrumental in helping us rigorously frame the mathematical aspects of this work.
		Finally, we also thank the Associate Editor Prof. Regina Burachik for her comments and support, as well as the anonymous referees, whose meticulous reading, in-depth analysis, and detailed suggestions greatly enhanced the quality of this work.

	\phantomsection
	\addcontentsline{toc}{section}{References}
	\bibliographystyle{plain}
	\bibliography{TeX/references.bib}

\end{document}